\definecolor{my-link}{rgb}{0.5,0.0,0.0}
\definecolor{my-blue}{rgb}{0.0,0.0,0.6}
\definecolor{my-red}{rgb}{0.5,0.0,0.0}
\definecolor{my-green}{rgb}{0.0,0.5,0.0}
\newcommandx{\addmath}[2][1=]{\todo[linecolor=red,backgroundcolor=red!25,bordercolor=red,#1]{#2}}
\newcommandx{\fixtext}[2][1=]{\todo[linecolor=blue,backgroundcolor=blue!25,bordercolor=blue,#1]{#2}}
\newcommandx{\note}[2][1=]{\todo[linecolor=yellow,backgroundcolor=yellow!25,bordercolor=yellow,#1]{#2}}
\newtheorem{theorem}{Theorem}[section]
\newtheorem{lemma}[theorem]{Lemma}
\newtheorem{corollary}[theorem]{Corollary}
\theoremstyle{definition}
\newtheorem{definition}[theorem]{Definition}
\newtheorem{example}[theorem]{Example}
\theoremstyle{remark}
\newtheorem{remark}[theorem]{Remark}
\numberwithin{equation}{section}
\newcommand{\E}{\mathbb{E}}
\newcommand{\cG}{\mathcal{G}}
\newcommand{\cA}{\mathcal{A}}
\newcommand{\cC}{\mathcal{C}}
\newcommand{\cE}{\mathcal{E}}
\newcommand{\cP}{\mathcal{P}}
\newcommand{\cK}{\mathcal{K}}
\newcommand{\cI}{\mathcal{I}}
\renewcommand{\P}{\mathbb{P}}
\newcommand{\R}{\mathbb{R}}
\newcommand{\Q}{\mathbb{Q}}
\newcommand{\Z}{\mathbb{Z}}
\newcommand{\N}{\mathbb{N}}
\newcommand{\e}{\varepsilon}
\newcommand{\kS}{\mathfrak{S}}
\newcommand{\Lapp}{\Lambda_{\mathrm{pp}}}
\newcommand{\Lapl}{\Lambda_{\mathrm{pl}}}
\newcommand{\gpp}{\Lapp^\infty}
\newcommand{\gpl}{\Lapl^\infty}
\def\w{\omega}
\def\range{\mathcal R}
\def\rangez{\range_0}
\def\rangegen{\range'}
\def\rangegengen{\range''}
\def\Uset{\mathcal U}
\def\Usetz{\Uset_0}
\def\Usetgen{\Uset'}
\font \mymathbb = bbold10 at 11pt
\newcommand{\one}{\mbox{\mymathbb{1}}}    
\providecommand{\abs}[1]{\vert#1\vert}
\newcommand{\fl}[1]{\lfloor{#1}\rfloor} 
\newcommand{\ce}[1]{\lceil{#1}\rceil}
 \newcommand{\cL}{\mathcal{L}}
\def\ZLpp{a} 
 \newcommand{\xhat}{\hat{x}}
 \DeclareMathOperator{\spn}{span}
\newcommand{\Zd}{\Z^d} 
\newcommand{\Qd}{\Q^d} 
 \newcommand{\cplus}{\cC_+}
 \newcommand{\gplus}{\cG_+}
\def\ri{\mathrm{ri\,}}
\begin{document}

\begin{frontmatter}

\title{A shape theorem and a variational formula for\\ the quenched Lyapunov exponent of\\ random walk in a random potential}
\runtitle{Shape theorem and  variational formula for RWRP}

\begin{aug}
\author{\fnms{Christopher} \snm{Janjigian}\thanksref{t1}\ead[label=e1]{janjigia@math.utah.edu}}, 
\author{\fnms{Sergazy} \snm{Nurbavliyev}\thanksref{t2}\ead[label=e2]{sergazy@math.utah.edu}}
\and
\author{\fnms{Firas} \snm{Rassoul-Agha}\thanksref{t3}\ead[label=e3]{firas@math.utah.edu}}

\thankstext{t1}{Research partially supported by National Science Foundation grant DMS-1954204.}
\thankstext{t2}{Research partially supported by National Science Foundation grants DMS-1811087 and DMS-1715680.}  
\thankstext{t3}{Research partially supported by National Science Foundation grants DMS-1811087 and DMS-1715680.}

\runauthor{C.\ Janjigian, S.\ Nurbavliyev, and F.\ Rassoul-Agha}

\address{$^1$University of Utah, Department of Mathematics, 155 S 1400 E, Salt Lake City, UT 84112. \printead{e1}}
\address{$^2$University of Utah, Department of Mathematics, 155 S 1400 E, Salt Lake City, UT 84112. \printead{e2}}
\address{$^3$University of Utah, Department of Mathematics, 155 S 1400 E, Salt Lake City, UT 84112. \printead{e3}}

\end{aug}

\begin{abstract}
We prove a shape theorem and derive a variational formula for the limiting quenched Lyapunov exponent and the Green's function of 
random walk in a random potential on a square lattice of arbitrary dimension and with an arbitrary finite set of steps. The potential is a function of a stationary environment and the step of the walk. 
This potential is subject to a moment assumption whose strictness is tied to the mixing of the environment.
Our setting includes directed and undirected polymers, random walk in static and dynamic random environment, and, when the temperature is taken to zero, our results also give a shape theorem and a variational formula for the time constant of both site and edge directed last-passage percolation and standard first-passage percolation. 
\end{abstract}

\begin{keyword}[class=MSC]
\kwd[Primary ]{60K35}
\kwd{60K37}
\end{keyword}

\begin{keyword}
\kwd{cocycle}
\kwd{first-passage percolation}
\kwd{FPP}
\kwd{Green's function}
\kwd{last-passage percolation}
\kwd{LPP}
\kwd{Lyapunov exponent}
\kwd{random polymer measure}
\kwd{random walk}
\kwd{random environment}
\kwd{random potential}
\kwd{RWRE}
\kwd{RWRP}
\kwd{shape theorem}
\kwd{variational formula}
\end{keyword}


\end{frontmatter}

\section{Introduction}
The model of a random walk in a random
potential (RWRP) on the lattice contains as special cases a large number of other models
frequently studied in the probability literature, including directed polymers in random environments,
random walks in both static and dynamic random environments, and directed
and undirected first-passage percolation. In each of these models, substantial attention has been paid to a quantity which serves as the leading order asymptotic of the model. This quantity goes by various names in different models, including the time constant, the limit shape, the asymptotic free energy, the Green's function, and the quenched Lyapunov exponent. 

In the present paper, we consider the quenched Lyapunov exponent and the exponential rate of decay of the Green's function of a random walk in a random potential with general steps on the lattice and in a stationary potential which is allowed to depend both on the position and on the step of the walk, subject to certain moment conditions. 
Throughout this paper, the Lyapunov exponent gives the leading order exponential decay rate of the partition function for the model where the walk is killed on first reaching a set, while the Green's function corresponds to an unrestricted path length model without killing. We consider these models both in positive temperature and at zero temperature and so our results hold for all of the models described in the previous paragraph. These connections are discussed near the beginning of 
Section \ref{sec:setmain}.

The main contributions of this paper are twofold. First, we show a locally-uniform version of the limits defining the Lyapunov exponent and Green's function exponential decay rate. Such results are sometimes known as \textit{shape theorems} in the literature and play a key role in relating the structure of the Lyapunov exponents or Green's function decay rate to the pre-limit behavior of the model. The uniformity is needed because one frequently needs to apply the limit along random sequences of lattice paths (for example, along a geodesic).

Our second main contribution is to obtain a variational representation for the Lyapunov exponent and Green's function decay rate in terms of random cocycles, which generalizes the variational formulas that were previously obtained for some specializations of the model we consider.
In addition to offering a tool that allows us to study the structure of the Lyapunov exponent or Green's function decay rate generally, the random fields which appear in these variational problems are intimately connected to the prelimit structure of the model. We will mention some of these connections when we review the previous work on the problems we consider.

There are two main technical novelties in this paper. First, the admissible steps of our reference walk are allowed to be any finite subset of $\Z^d$. Most of the previous work focused on nearest-neighbor steps or on restricted-length paths and the previous arguments often rely on these assumptions in essential ways. Second, in most of our results, our reference measure is only required to be shift-stationary rather than shift-ergodic or, as is typical in much of the literature, even i.i.d. 
This introduces a few technical difficulties which can mostly be addressed with standard tricks, but some care is required. Measure-theoretic issues make it non-trivial to rely on the ergodic decomposition to obtain the stationary case from the ergodic case in many of our results and, in fact, we avoid arguments of this type for this reason. This extension is an important generalization because, as in \cite{Dam-Han-14} and \cite{Jan-Ras-20-aop} for example, one frequently works on extensions of the original probability space which are \textit{a priori} only shift stationary. The fact that previous variational formulas have assumed ergodicity needed to be worked around for example in the proof of the key Theorem 4.8 of \cite{Jan-Ras-20-aop}. Our long-term goal is to generalize and give a unified treatment of a number of recent advances made in the study of random polymers and percolation models
to as wide a class as possible. The present work is a necessary first step toward extending these connections to the general RWRP setting.

\subsection*{Previous work}
First-passage percolation (FPP) was introduced in 1965 by Hammersley and Welsh \cite{Ham-Wel-65} to model the flow of water through a porous medium. It was the first in a long line of growth models which have been central to the development of modern probability. One of the main questions considered in the early work on such processes was whether the rescaled cluster has a limit shape and, if so, how strong of a limit can be expected to hold. Richardson proved the first major result of this type in 1973 in \cite{Ric-73}, essentially corresponding to the case of i.i.d.~geometric edge weights and showed that the cluster converges as a set or, equivalently, in the local-uniform sense which is of interest to us in the present paper. 
Such results are known as \textit{shape theorems}. Around the same time, Kingman \cite{Kin-73} first proved the subadditive ergodic theorem, in part motivated by the problem of showing the existence of the limiting time constant (or limit shape) in percolation. In 1981, Cox and Durrett \cite{Cox-Dur-81} gave necessary and sufficient conditions for the existence of the limit shape in several modes of convergence when the weights are i.i.d., including pointwise in probability, pointwise almost surely, and locally-uniformly. Durrett and Liggett \cite{Dur-Lig-81} subsequently proved regularity properties and the existence of a flat edge in the limit shape of Richardson's model when the probability that an edge is open is sufficiently close to $1$.

The directed polymer model, a precursor of the random walk in a random potential (RWRP), was introduced in the physics literature in 1985 by Huse and Henley \cite{Hus-Hen-85} to model the domain wall in the ferromagnetic Ising model with random impurities. They were taken up in the mathematics literature in \cite{Imb-Spe-88}. This model is a measure on random paths interacting with a random environment and it can be viewed as a noisy version of percolation. Through the Feynman-Kac representation, the partition function in these models can frequently be viewed as the solution to a random Schr\"odinger equation. In this setting, the limiting free energy or quenched Lyapunov exponent is the leading-order quantity analogous to the limit shape. This quenched Lyapunov exponent, viewed as a function of the direction, also serves as the convex dual of the large deviation rate function for the endpoint of the path under the quenched path measure. 

In 1988, Schroeder \cite{Sch-88} obtained the leading order exponential decay rate of the Green's function for the Schr\"odinger operator $-\Delta + V$ with $V(x)$ a periodic continuous function on $\R^d$ using large deviation techniques for diffusions originally developed by Donsker and Varadhan. Subsequently, in 1994, Sznitman \cite{Szn-94} considered the model of a $d$-dimensional Brownian motion in a smoothed Poissonian potential, a continuum random walk in a random potential. He proved a quenched large deviation principle as well as a locally-uniform shape theorem for the Green's function corresponding to the operator $(1/2)\Delta - (\lambda + V)$, where $V(x)$ is the random potential and $\lambda$ is a non-negative constant. 

In \cite{Zer-98-aap} and \cite{Zer-98-aop} Zerner, motivated by ideas in \cite{Szn-94}, showed the existence of the quenched Lyapunov exponent and a corresponding locally-uniform shape theorem for two models of a random walk in a random potential on $\Z^d$: a random walk in a random environment (RWRE) with nearest neighbor steps on $\Z^d$, where the logarithm of the i.i.d.\ transitions have at least $d$ finite absolute moments and the simple symmetric random walk in a non-negative i.i.d.\ site potential with the same moment condition. In the latter model, which describes a simple symmetric random walk on $\Z^d$ in an i.i.d.\ potential $V(x)$, the quantity of interest is again the Green's function for the operator $\Delta - V$ on $\Z^d$. In \cite{Zer-98-aop}, Zerner also proved a quenched large deviation principle for the RWRE model, but under the additional assumption of the \textit{nestling} condition. This is the condition that zero lies in the convex hull of the support of the law of the drift of the walk. 

The duality between velocity and shifts of the potential, familiar from large deviation theory, plays an important role in the present work. This duality was first observed in the context of random polymer measures by Zerner in his works \cite{Zer-98-aap,Zer-98-aop,Zer-00}.

The problem of proving the quenched large deviation principle in the non-nestling case remained open until Varadhan  \cite{Var-03-cpam} proved the result in the case where the walk has finitely many possible steps and the transition probabilities of all nearest-neighbor steps are uniformly bounded away from zero. A few years later, Flury \cite{Flu-07} proved the quenched large deviation principle for nearest-neighbor random walk in an i.i.d.\ random potential with $d$ finite absolute moments.

In \cite{Mou-12}, Mourrat studied the same model as was previously considered by \cite{Zer-98-aap,Flu-07} and, using a modification of the arguments in \cite{Cox-Dur-81} along with a renormalization scheme, proved necessary and sufficient conditions for the existence of the Lyapunov exponent in several modes of convergence when the weights are i.i.d., including in $L^1$, in probability, and almost surely. This work and \cite{Cox-Dur-81} are notable for allowing $V(x) = \infty$ and so include walks on percolation clusters.

The first papers to consider RWRP at the level of generality considered in the present paper were \cite{Ras-Sep-14} and \cite{Ras-Sep-Yil-13}, which showed the existence of the Lyapunov exponent in the point-to-point and point-to-hyperplane geometries for fairly general restricted path length RWRP models on the lattice. In these works, the reference walk is allowed to take a given number of steps from a finite subset of the lattice and the potential is allowed to depend both on the location of the walk and the increment of the walk. In order to work at this level of generality, the potential is required to satisfy certain mixing and moment conditions, the strictness of which varies. For example, if the weights are bounded, then the mixing condition reduces to ergodicity, while if the potential is only known to have $d+\e$ moments, the mixing condition essentially reduces to independence.  
In the current work, we consider a similar general setting but with different path geometries. Here we either run the walk until the first time it reaches a site or level, otherwise known as running a random walk with killing, or else consider all paths which reach a given site or level. The connection between restricted and unrestricted path length models was recently exploited in \cite{Kri-Ras-Sep-20-fluffy-} to derive information about the asymptotic length of the geodesic path in FPP.

A common issue encountered in the study of models of this type is that while we can show the limit shape exists and satisfies certain soft properties like homogeneity and convexity, it is difficult to go further than that. For this reason, among others, it is valuable to have a variational characterization of the limit shape in terms of (typically infinite-dimensional) observables of the model. The main ideas which led to the development of such variational formulas for RWRP first arose in the context of homogenization of Hamilton-Jacobi equations in \cite{Kos-Rez-Var-06} and \cite{Kos-Var-08}. 
These ideas were adapted in \cite{Ros-06} to give two variational formulas for the level-1 quenched large deviation rate function for undirected RWRE. 
One of the formulas is through the familiar convex duality with entropy and the other formula is in terms of objects called cocycles.  
These formulas were extended to the (two-step) level-2 large deviation rate function in \cite{Yil-09-cpam}, to the level-3 rate function in \cite{Ras-Sep-11}, and then to the case of directed and undirected restricted path length RWRP and percolation models in \cite{Ras-Sep-Yil-13,Ras-Sep-14,Geo-Ras-Sep-16}. 
In the case of the standard FPP model, a related variational formula in terms of cocycles was derived in \cite{Kri-16} and the (level 1) entropy formula was recently proved in \cite{Bat-20-}, where the formula was utilized to answer some questions about asymptotic properties of geodesics.
The entropy variational formula was also proved for the Green's function decay rate for the Schr\"odinger operator with a periodic potential in 
the aforementioned \cite{Sch-88} and this was extended to the case of a more general random ergodic potential in \cite{Rue-14}. In \cite{Rue-16}, this variational expression was used to prove regularity properties of the Lyapunov exponent, as a function of the potential and of the law of the environment. Another variational formula for the limiting free energy in directed polymer models appeared in \cite{Bat-18,Bat-Cha-20} and was used to study localization properties.
In the current paper, we prove a cocycle variational formula for the Lyapunov exponent and the decay rate of the Green's function.
When specialized to the case of the nearest-neighbor FPP model, the variational formula that we prove will appear in \cite{Kri-Ras-Sep-20-puppy-}. 

The extremizing objects in the cocycle variational formula are known as correctors, by way of analogy to the Hamilton-Jacobi setting in which they originally arose. Correctors were initially defined as first-order terms in the small-$\e$ expansion in the homogenization of a Hamilton-Jacobi equation. More generally, they are cocycles satisfying certain conditions of compatibility with the random environment. These extremizing correctors encode much of the large-scale information in the model. For example, it was observed in \cite{Ras-Sep-Yil-17-ber} that they can be used to characterize the weak, strong, and very strong disorder regimes of directed polymer models. In the same vein, cocycles satisfying the compatibility condition arise naturally in the context of RWRE, RWRP, and percolation as the (analogues of) Doob $h$-transforms that one obtains by conditioning the random path to have a different law of large numbers behavior than is typical. This connection first appears in \cite{Yil-09-aop} and \cite{Yil-11-aop}. 
Through this connection one can use these correctors to construct infinite path length limits (Gibbs measures and infinite geodesics) and study their properties. See the recent papers: \cite{Bal-Ras-Sep-19}, \cite{Dam-Han-14}, \cite{Dam-Han-17}, \cite{Fan-Sep-18-}, \cite{Geo-Ras-Sep-17-ptrf-2},  \cite{Geo-Ras-Sep-17-ptrf-1}, \cite{Geo-etal-15}, \cite{Hof-08}, \cite{Jan-Ras-20-aop},\cite{Jan-Ras-20-jsp}, \cite{Jan-Ras-Sep-19-}. 
As mentioned above, in Theorem 4.8 of \cite{Jan-Ras-20-aop} it was noticed that variational problems of the type we produce here can be used to resolve a key technical obstruction in the construction of the cocycles which are needed in order to build these infinite volume objects. 
In some special (solvable) cases of two-dimensional directed RWRE and RWRP models it was shown in \cite{Geo-etal-15,Bal-Ras-Sep-19} that these $h$-transforms manifest Kardar-Parisi-Zhang (KPZ) fluctuation behavior.
\medskip

\textbf{Acknowledgements.} The authors thank Erik Bates and Timo Sepp\"al\"ainen for helpful comments.


\section{Setting, notation, and main results} \label{sec:setmain}
Throughout the paper $(\Omega,\kS)$ will denote a Polish space endowed with its Borel $\sigma$-algebra. 
A sample point $\w$ in $\Omega$ is called an {\it environment}.
We assume this measurable space is equipped with a group of measurable commuting bijections $T=\lbrace T_z:\Omega\to \Omega:z\in \Zd \rbrace$, i.e.\ $T_0$ is the identity map and $T_x\circ T_y=T_y\circ T_x=T_{x+y}$. 
We are given a probability measure $\P$ on $(\Omega,\kS)$ that is invariant under $T_z$ for all $z\in\Zd$.
Expectation with respect to $\P$ is denoted by $\E$. 
 For a subset $\rangegen\subset\Z^d$ we say $\P$ is ergodic under the shifts $\{T_z:z\in\rangegen\}$ if $\P(A)\in\{0,1\}$ for every event $A$ such that $T_z^{-1} A=A$ for all $z\in\rangegen$.

We will denote the set of real numbers by $\R$, the rational numbers by $\Q$, and the set of whole numbers by $\Z$. A $+$ subscript indicates nonnegative numbers.

Let $\range$ be a finite subset of $\Z^d$ with at least two points in it. 
We are given a measurable function $V : \Omega \times \range \to \R$, which we call a {\it potential}. 

Let $p:\range\to(0,1)$ be a probability kernel, i.e.\ $\sum_{z\in\range}p(z)=1$. For $x\in\Z^d$, let $P_x$ denote the distribution of the time-homogeneous random walk with transition kernel $p$ and starting point $x$. $E_x$ denotes the corresponding expectation. The random walk itself is denoted by $\{X_n:n\in\Z_+\}$.
A sequence $(a_i)_{i=m}^n$ is denoted by $a_{m:n}$ and similarly for $a_{m:\infty}$, $a_{-\infty:n}$, and $a_{-\infty:\infty}$.
A sequence $x_{m:n}$ or $x_{m:\infty}$ with $x_m=x$ and $x_{i+1}-x_i\in\range$ for all $i$ is called an {\it admissible path}. For such a path we use $z_i=x_i-x_{i-1}$ to denote the increments. When the sequence is the random walk itself we use
$X_{m:n}$ and its increments are denoted by $Z_{m+1:n}$.

For $y\in \Z^d$  let $\tau_y$ be the time of first return of the walk to site $y$:
		\[ \tau_y=\inf \lbrace n\ge 1 : X_{n}=y \rbrace\] 
with the convention that $\inf \varnothing=\infty$. 

Throughout, $\abs{x}_1$ denotes the $\ell^1$-norm on $\R^d$ and for $\xi\in\R^d$ and  $A,B\subset\R^d$
	\[\text{dist}(x,A)=\inf\{\abs{y-x}_1:y\in A\}\quad\text{and}\quad\text{dist}(A,B)=\inf\{\abs{y-x}_1:x\in A,y\in B\}.\] 
 For $\rangegen\subset \Z^d,$ let 
	\begin{align*}
	&\cG(\rangegen)=\Bigl\{ \sum_{z\in \rangegen}b_z z: b_z\in \Z  \Bigr\},&\hspace{-30pt}
	\gplus(\rangegen)=\Bigl\{ \sum_{z\in \rangegen}b_z z: b_z\in \Z_+  \Bigr\},\\
	&\cplus(\rangegen)=\Bigl\{ \sum_{z\in \rangegen}b_z z: b_z\in\R_+  \Bigr\},&\hspace{-30pt}\text{and}\quad
	\cplus'(\rangegen)=\Bigl\{ \sum_{z\in \rangegen}b_z z: b_z\in \Q_+ \Bigr\},
	\end{align*}
be respectively the additive group, additive semigroup, cone, and rational cone, generated by $\rangegen$.
We write $\cG$, $\gplus,\cplus$ and $\cplus'$ when $\rangegen=\range$.

For $x,y\in\Z^d$ with $y-x\in \gplus\setminus \{0\}$ let
		\begin{align}\label{a-def}
		a(\w,x,y)=-\log E_{x}\Bigl[ \exp\Bigl\{-\sum_{k=0}^{\tau_y-1} V(T_{X_k}\w, Z_{k+1})\Bigr\} \one_{\lbrace \tau_y<\infty \rbrace}  \Bigr]
		\end{align}
and set $a(\w,x,x)=0$. As is customary, we typically omit the $\w$ and write $a(x,y)$. Describing the asymptotic growth of 
$a(0,x)$ as $\abs{x}_1\to\infty$ is the main goal of this paper.\medskip

The following is a list of examples covered by our general setting.

\begin{example}[Product environment]\label{ex:product}
A natural  choice of $\Omega$ is a product space $\Omega=\Gamma^{\Z^d}$,
where $\Gamma$ is a Polish space, equipped with the product topology, Borel $\sigma$-algebra $\kS$,  generic points $\w=(\w_x)_{x\in\Z^d}$, and  translations $(T_x\w)_y=\w_{x+y}$. Here
  $\P$ is an {\it i.i.d.}\ or {\it product measure} if 
  the coordinates $\{\w_x:x\in\Z^d\}$ are independent and identically distributed  (i.i.d.) random
  variables under $\P$. 
  We say that $\P$ has a {\it finite range of dependence} if there exists an $M\ge0$ such that for any subsets $A,B\subset\Z^d$ with $\abs{x-y}_1>M$ for all $x\in A$ and $y\in B$,  
$\{\w_x:x\in A\}$ and $\{\w_x:x\in B\}$ are independent under $\P$.
The potential $V$ is said to be {\it local} if 
it depends on only finitely many coordinates $\w_x$, i.e.\ if there exists an $L\ge 0$ such that for all $z\in\range$, $V(\w,z)$ is measurable with respect to $\sigma(\{\w_x:\abs{x}_1\le L\})$. 
\end{example}

\begin{example}[Edge and vertex weights]
Random weights assigned to the vertices of $\Z^d$  can be  modeled by $\Omega=\R^{\Z^d}$ and $V(\w)=\w_0$.  In fact, it is sufficient to take $\Omega=\R^{\cG}$ since the coordinates outside $\cG$  are not needed as  long as paths begin at points in $\cG$.  

To represent  directed edge weights  we can  take   $\Omega=\Gamma^{\cG}$ with $\Gamma=\R^\range$ where an element $s\in\Gamma$ represents the weights of the admissible edges out of the origin:  $s=(\w_{(0,z)}: z\in\range)$.    Then $\w_x=(\w_{(x,x+z)}: z\in\range)$ is the vector of edge weights out of vertex $x$. 
 Shifts act by $(T_u\w)_{(x,y)} =\w_{(x+u,y+u)}$ for $u\in\cG$. The potential is $V(\w,z)=\w_{(0,z)}=$ the weight of the edge $(0,z)$.   

To have weights on undirected nearest-neighbor edges take  
$\Omega=\R^{\cE}$ where   $\cE=\{ \{x,y\}\subset\Z^d:  \abs{y-x}=1\}$  
is the set of undirected nearest-neighbor  edges on $\Z^d$. Now   
$\range=\{ \pm e_i: i=1,\dotsc,d\}$, $V(\w,z)=\w_{\{0,z\}}$  and  $(T_u\w)_{\{x,y\}} =\w_{\{x+u,y+u\}}$ for $u\in\Z^d$.  
\end{example}

\begin{example}[Strictly directed walk] \label{ex:dir-iid}
This is the case where  
 $0$ lies  outside the convex hull $\Uset$ of $\range$.
It is equivalent to the existence of  $\hat u\in\Z^d$ such that $\hat u\cdot z>0$ for all $z\in\range$. A familiar special case is the one where $\range=\{e_1,\dotsc,e_d\}$. Another familiar directed polymer is the one with $\range=\{e_1\pm e_2,\dotsc,e_1\pm e_{d+1}\}$.
\end{example}

\begin{example}[Stretched polymer]\label{ex:stretch}  
A stretched polymer has an external field $h\in\R^d$
that biases the walk, so the potential is 
$V(\w,z)=\Psi(\w)+h\cdot z$.  The two cases most studied in the literature are the ones with $\range=\{e_1,\dotsc,e_d\}$ 
and $\range=\{\pm e_1,\dotsc,\pm e_d\}$.
\end{example}

\begin{example}[Random walk in random environment]\label{ex:rwre}
To cover RWRE take $V(\w,z)=-\log \pi_z(\w)+\log p(z)$ where
$(\pi_z)_{z\in\range}$ is a measurable mapping from $\Omega$ into $\cP=\{(\rho_z)_{z\in\range} \in[0,1]^\range:\sum_z \rho_z=1\}$, the space of
probability distributions on $\range$.  The 
{\sl quenched path measure}  $Q^\w_x$ of RWRE
started at $x$ is the probability measure on the path space $(\Z^d)^{\Z_+}$ 
  defined by the initial condition $Q^\w_x(X_0=x)=1$ and 
the transition probability   $Q^\w_x(X_{n+1}=y+z\vert X_n=y)
=\pi_z(T_y\w)$, $z\in\range$.    Then $e^{-a(\w,x,y)}$ is the quenched probability $Q_x^\w(\tau_y<\infty)$ the random walk started at $x$ will ever reach $y$. 
Important cases of RWRE are the {\it space-time case}, where $\range=\{e_1,\dotsc,e_d\}$, and the {\it nearest-neighbor case}, where $\range=\{\pm e_1,\dotsc,\pm e_d\}$.
\end{example}

\begin{example}[Random growth]
Using $\beta V$ in place of $V$, where $\beta\in\R$ is a parameter called the {\it inverse temperature}, and sending $\beta$ to $\infty$ or $-\infty$ one gets, respectively, the models known as {\it first-} and {\it last-passage percolation}.
In the first case, $a(\w,x,y)$ degenerates to
    \begin{align}\label{a-FPP}
    a_\infty(\w,x,y)=\min_{n\in\N}\min_{x_{0:n}}
    \Bigl\{\sum_{k=0}^{n-1}V(T_{x_k}\w, z_{k+1})\Bigr\},
    \end{align}
where the minimum is taken over all admissible paths $x_{0:n}$ from $x_0=x$ to $x_n=y$ that reach $y$ for the first time at step $n$. For last-passage percolation the minimum is replaced by a maximum.

Standard first-passage percolation with edge weights is the case where 
$\range=\{\pm e_1,\dotsc,\pm e_d\}$, $\Omega=\R^{\cE}$, $\cE=\{ \{x,y\}\subset\Z^d:  \abs{y-x}=1\}$, and $V(\w,z)=\w_{\{0,z\}}$.

Directed last-passage percolation with vertex weights is the case where 
$\range=\{e_1,\dotsc,e_d\}$, $\Omega=\R^{\Z^d}$, and $V(\w,z)=\w_0$.
\end{example}

For $p\ge1$ and $f:\Omega\times\range\rightarrow\R$ measurable we write $f\in L^p$ to say that $\E[\abs{f(\w,z)}^p]<\infty$ for all $z\in\range$. This includes the case when $f$ is only a function of $\w$.\smallskip

For $\rangegen\subset\range$ and $z\in\rangegen\setminus\{0\}$, a nonnegative measurable function $g:\Omega\to\R$ is said to be in class $\cL_{z,\rangegen}$ if  
	\begin{equation}\label{def:cL}
	\varlimsup_{\varepsilon\searrow 0}\varlimsup_{n\to \infty} \max_{\substack{x\in\gplus(\rangegen)\\ \abs{x}_1\le n}}\frac{1}{n}\sum_{0\le k\le \varepsilon n}g(T_{x+kz}\w)=0 \quad \text{for}\quad\P \text{-a.e.\ } \w.
	\end{equation}
Membership in $\cL_{z,\rangegen}$ can come from a balance between moments of $g$ and the amount of mixing of $\P$. Bounded $g$ guarantees $g\in \cL_{z,\range}$ for any $\P$ and $ z\in\range\setminus\{0\}$. In the setting of Example \ref{ex:product}, Lemma A.4 of \cite{Ras-Sep-Yil-13} implies that $g\in \cL_{z,\range}$, for any $z\in\range\setminus\{0\}$, if $g$ is local, $\P$ has a finite range of dependence, and $g\in L^p$ for some $p>d$. 
%
%
\smallskip

We now recall a few definitions and facts from convex analysis. A convex subset $A$ of a convex set $K\subset\R^d$ is called a face if for all $\xi \in A$ and $\zeta,\eta\in K$, $\xi=t\zeta+(1-t)\eta$ with $t\in(0,1)$ implies $\zeta,\eta\in A$. The intersection of faces is clearly a face. 
$K$ itself is a face. By \cite[Corollary 18.1.3]{Roc-70} any other face of $K$ is entirely contained in the relative boundary of $K$. Extreme points of $K$ are the
zero-dimensional faces. By \cite[Theorem 18.2]{Roc-70} each point $\xi\in K$ has a unique face $K_\xi$ 
such that $\xi\in\ri K_\xi$, where for a set $A\subset\R^d$, $\ri A$ denotes its relative interior.  
By \cite[Theorem 18.1]{Roc-70} if $\xi\in K$ belongs to a face $A$ then any representation of $\xi$ as a convex combination of elements of $K$ involves only elements of $A$.
\cite[Theorem 18.3]{Roc-70} says that if $K$ in the convex set, respectively the convex cone, generated by a set $R$, then a face $A$ of $K$ is the convex hull, respectively the convex cone, generated by $R\cap A$. (The convex cone generated by the empty set is the singleton $\{0\}$.)

Let $\range_\xi= \range\cap\cC_\xi$ and let $\Uset$ be the convex hull of $\range$. Then $\Uset_\xi=\Uset\cap\cC_\xi$ and $\range_\xi=\range\cap \Uset_\xi$. 
For $\xi\not\in\Uset$ let $\Uset_\xi=\varnothing$. Note that in this case $\Uset\cap\cC_\xi$ and $\range_\xi$ may not be empty.
However, for $\xi=0$, if $0\not\in\Uset$, then $\cC_0=\{0\}$ and $\Usetz=\rangez=\varnothing$.
By \cite[Corollary A.2]{Ras-Sep-Yil-13}, $0\in \Uset$ is equivalent to the existence of loops, i.e.~there exist $ z_i\in\range$, $1\le i\le k$, such that $\sum_{i=1}^kz_i=0$. By \cite[Corollary A.3]{Ras-Sep-Yil-13}, $0\in\ri\Uset$ is equivalent to the existence of an admissible path between any two points $x,y\in\cG$.\smallskip

Given a face $\cA$ of $\cplus$ and $\delta>0$ let
\begin{align}\label{cAdel}
\cA_\delta=\Bigl\{ \xi \in \cA\setminus\lbrace0\rbrace:\text{dist}\Bigl(\frac{\xi}{|\xi|_1},\cA\setminus\ri\cA\Bigr)\ge\delta \Bigr\}.
\end{align}
Note that $\xi\in\cA_\delta\setminus\{0\}$ implies $\xi/|\xi|_1\in \cA_\delta$. 
Recall that $\range_0=\range\cap \Uset_0$, where $\Uset_0$ is the unique face of $\Uset$ that contains $0$ in its relative interior. If $0\not\in\Uset$ then $\Uset_0=\range_0=\varnothing$.
Let $V^+=\max(V,0)$. 

The following is our first main result.

\begin{theorem}\label{shapetheorem}
    Assume $V^+\in L^1$.
     Assume also that one of the following two holds:
	 \begin{enumerate}[label={\rm(\roman*)}, ref={\rm\roman*}] \itemsep=3pt 
	\item\label{ass:V>c} We have
	\begin{equation}\label{V>0} 
	\P\{V(\w,z)\ge 0\}=1 \quad \text{for all }z\in \rangez\quad\text{{\rm(}with possibly $\rangez=\varnothing${\rm)}}
	\end{equation}
	and there exists a $c\in \R$ such that $\P\{V(\w,z)\ge c\}=1$ for all $z\in\range\setminus\rangez$.
	\item\label{ass:iid} The setting of Example \ref{ex:product} is in force and $0\not \in \Uset$, 
	$V$ is local, $\P$ has a finite range of dependence, and $V^+\in L^p$ for some $p>d$.
	\end{enumerate}
	Fix a face $\cA\ne\{0\}$ of $\cplus$ {\rm(}possibly $\cplus$ itself\,{\rm)}.  Let $\rangegen=\range\cap\cA$. Assume $V^+(\w,z)\in \cL_{z,\rangegen}$ for each $z\in\rangegen\setminus\{0\}$ and $V^+(\w,0)\in\cL_{\hat z,\rangegen}$ for some $\hat z\in\rangegen\setminus\{0\}$.
	Then there exist two stochastic processes $\{\alpha(\xi):\xi\in\ri\cA\}$ and $\{\alpha_\infty(\xi):\xi\in\ri\cA\}$ 
	such that the following hold $\P$-almost surely.
	\begin{enumerate}[label={\rm(\alph*)}, ref={\rm\alph*}] \itemsep=3pt 
	\item \label{alpha<Cxi} There is a positive random variable $C$ which satisfies $\E[C] < \infty$ so that for all $\xi\in\ri\cA$, $\abs{\alpha(\xi)}\le C\abs{\xi}_1$. 
	\item For all $s\in\R_+$ and $\xi,\zeta\in\ri\cA$
		\[\alpha(s\xi)=s\alpha(\xi)\quad\text{and}\quad\alpha(\xi)+\alpha(\zeta)\ge\alpha(\xi+\zeta).\]
	Consequently, $\alpha$ is convex and thus continuous on $\ri\cA$.
	\item\label{shapetheorem.b} For all $ \delta>0$ 
	\begin{equation}
	\varlimsup_{ \substack{ \abs{x}_1\to \infty \\ 
			x\in \gplus\cap \cA_\delta} } \dfrac{\abs{ a(0,x)-\alpha(x) }}{\abs{x}_1}= 0. \label{lim:shape}
    \end{equation}
	\item\label{al_infty} Claims \eqref{alpha<Cxi}-\eqref{shapetheorem.b} also hold for $a_\infty$ and $\alpha_\infty$ in place of, respectively, $a$ and $\alpha$.
	\end{enumerate}
	If furthermore $\P$ is ergodic under $\{T_z:z\in\range\cap\cA\}$, then $\alpha$ and $\alpha_{\infty}$ are deterministic on $\ri\cA$, and so is $C$ in \eqref{alpha<Cxi}.
\end{theorem}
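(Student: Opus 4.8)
The plan is to run the classical shape-theorem machinery simultaneously for the positive-temperature object $a$ and the zero-temperature object $a_\infty$, since the two satisfy the same structural properties. In order: (1) establish subadditivity of $a$ (and $a_\infty$) together with a priori linear bounds; (2) extract the ray limits $\alpha(\xi)=\lim_n a(0,n\xi)/n$ from Kingman's subadditive ergodic theorem, first for rational directions in $\cA$ and then on all of $\ri\cA$ by homogeneity and convexity; (3) upgrade the pointwise-along-rays limit to the locally uniform statement \eqref{lim:shape} by a finite-covering argument in which the hypotheses $V^+(\cdot,z)\in\cL_{z,\rangegen}$ supply exactly the uniform control of the error terms. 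Because one frequently wants only stationarity, I would carry out every pointwise identity and extension on a single fixed full-measure event and avoid the ergodic decomposition entirely; the deterministic addendum is then a consequence of a shift-invariance property of the ray limits established along the way.

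\emph{Subadditivity, a priori bounds, and ray limits.} The basic inequality is
\begin{equation*}
a(\w,x,z)\le a(\w,x,y)+a(\w,y,z),\qquad y-x,\ z-y\in\gplus,
\end{equation*}
together with the same bound for $a_\infty$. In case \eqref{ass:iid} the walk is strictly directed, so $\hat u\cdot X_n$ is strictly increasing and the walk must hit $y$ before $z$, whence the strong Markov property gives the inequality exactly. In case \eqref{ass:V>c} one truncates a concatenated path at its first visit to $z$: the discarded steps contribute nonnegatively on the loop directions $\rangez$, while a linear functional that vanishes on $\rangez$ and is strictly negative on $\range\setminus\rangez$ (available since $\Uset_0$ is a face of a polytope) bounds the number of non-loop steps on any admissible path from $x$ to $z$ by $O(|z-x|_1)$, which both legitimizes subadditivity up to a negligible term and keeps $a(\w,x,z)$ from being $-\infty$. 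Evaluating $a$ along a fixed direction-by-direction path from $0$ to $x$ — along which only $V^+$ matters, since dropping negative parts of $V$ only lowers the weight — and applying standard maximal/ergodic-theorem estimates to the resulting fixed-direction sums of $V^+$ (finite by $V^+\in L^1$) gives $|a(0,x)|\le C|x|_1$ with a random $C$ satisfying $\E[C]<\infty$, which is claim \eqref{alpha<Cxi}; the lower half uses the constant $c$ and the non-loop step count in case \eqref{ass:V>c} and the directed structure in case \eqref{ass:iid}. For a fixed rational $\xi\in\gplus\cap\cA$ the array $\{a(i\xi,j\xi):0\le i\le j\}$ is a stationary subadditive process with integrable increments, so Kingman gives $a(0,n\xi)/n\to\bar\alpha(\xi)$ $\P$-a.s.\ and in $L^1$; the covariance $a(\w,x,y)=a(T_x\w,0,y-x)$, subadditivity, and the a priori bounds yield $|a(0,n\xi+z)-a(0,n\xi)|=o(n)$, so $\bar\alpha(\xi)$ is a.s.\ invariant under every shift $T_z$ with $z\in\gplus$, and this is precisely what lets one verify the pointwise relations $\bar\alpha(\xi+\zeta)\le\bar\alpha(\xi)+\bar\alpha(\zeta)$ and $\bar\alpha(s\xi)=s\bar\alpha(\xi)$ ($s\in\Q_+$) directly, without an ergodic decomposition. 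With $|\bar\alpha|\le C|\cdot|_1$, $\bar\alpha$ is Lipschitz on $\gplus\cap\cA$ and extends uniquely to a convex, positively homogeneous function $\alpha$ on $\ri\cA$ obeying \eqref{alpha<Cxi} and claim (b); the identical argument for $a_\infty$ produces $\alpha_\infty$.

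\emph{From rays to the shape theorem (the main obstacle).} The heart is \eqref{shapetheorem.b}. Fix $\delta>0$; the set $\{\xi\in\cA_\delta:|\xi|_1=1\}$ is compact and contained in $\ri\cA$, so cover it by finitely many balls of radius $\e$ centered at rational directions $\xi_1,\dots,\xi_N\in\gplus\cap\ri\cA$. For $x\in\gplus\cap\cA_\delta$ with $n=|x|_1$ large, pick $\xi_i$ within $\e$ of $x/n$ and an integer $m$ with $m\xi_i-x\in\cG(\rangegen)$ and $|m\xi_i-x|_1\le C\e n$ (staying $\delta$ inside the face is what keeps these auxiliary points in the cone). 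Subadditivity gives
\begin{equation*}
a(0,x)\le a(0,m\xi_i)+a(m\xi_i,x)\qquad\text{and}\qquad a(0,m\xi_i)\le a(0,x)+a(x,m\xi_i),
\end{equation*}
so it remains to show $a(m\xi_i,x)$ and $a(x,m\xi_i)$ are $o(n)$ \emph{uniformly} in $x$. Writing the displacement between $x$ and $m\xi_i$ as $\sum_{z\in\rangegen}b_zz$ with $\sum_z|b_z|\le C\e n$ and bounding $a$ along the corresponding direction-by-direction path, these corrections are dominated by
\begin{equation*}
\sum_{z\in\rangegen}\ \max_{\substack{y\in\gplus(\rangegen)\\|y|_1\le Cn}}\ \sum_{0\le k\le C\e n}V^+(T_{y+kz}\w,z)\ +\ \bigl(\text{a term controlled by }V^+(\cdot,0)\in\cL_{\hat z,\rangegen}\bigr),
\end{equation*}
and the assumption $V^+(\cdot,z)\in\cL_{z,\rangegen}$ says exactly that each such term, divided by $n$, has vanishing $\varlimsup$ as $n\to\infty$ and then $\e\searrow0$; see \eqref{def:cL}. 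Feeding in $a(0,m\xi_i)/m\to\alpha(\xi_i)$, $|m\xi_i|_1/n\to1$, and the uniform continuity of $\alpha$ on the compact sphere, one gets that $\varlimsup_n\sup_x|a(0,x)-\alpha(x)|/n$ is bounded by the modulus of continuity of $\alpha$ at scale $\e$ plus the $\cL$-terms, and letting $\e\searrow0$ finishes \eqref{lim:shape}. The genuine difficulty is this simultaneous uniformity over all admissible endpoints — which is exactly why \eqref{def:cL} is phrased with an interior maximum over $\{|x|_1\le n\}$ — together with the geometric bookkeeping that keeps every auxiliary point inside $\cA$; both become more delicate for a general step set and a proper face $\cA\subsetneq\cplus$. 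The same covering argument applied verbatim to $a_\infty$ and $\alpha_\infty$ gives claim \eqref{al_infty}.

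\emph{The ergodic addendum.} If $\P$ is ergodic under $\{T_z:z\in\range\cap\cA\}$, then each ray limit $\bar\alpha(\xi)$, $\xi\in\gplus\cap\cA$, being a.s.\ invariant under all of these shifts by the second step (here $\range\cap\cA\subseteq\gplus$), is $\P$-a.s.\ constant; by the continuous extension, $\alpha$ and $\alpha_\infty$ are deterministic on $\ri\cA$, and one may then take the deterministic constant $C=\sup\{\,|\alpha(\xi)|\vee|\alpha_\infty(\xi)|:\xi\in\cA,\ |\xi|_1=1\,\}<\infty$ in \eqref{alpha<Cxi}.
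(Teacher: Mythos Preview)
Your outline is broadly right and close to the paper's, but two steps need repair.

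First, subadditivity of $a$ is exact, not ``up to a negligible term''. In case~\eqref{ass:V>c}, once the concatenated path first reaches $z$, any subsequent return to $z$ is along a loop and hence (Lemma~\ref{lm:loops}) uses only $\rangez$-steps, on which $V\ge0$ by \eqref{V>0}; the discarded potential is therefore nonnegative and subadditivity follows cleanly (Lemma~\ref{lemma1}). The linear functional vanishing on $\rangez$ and positive on $\range\setminus\rangez$ is used \emph{separately}, to bound the number of non-loop steps and obtain the lower bound $a(x,y)\ge -C|y-x|_1$ (Lemma~\ref{alphatil-lb1}); it plays no role in subadditivity. Relatedly, the paper never establishes a uniform-in-$x$ pointwise bound $|a(0,x)|\le C|x|_1$: claim~\eqref{alpha<Cxi} is about $\alpha$, not $a$, and is obtained from \eqref{atil-bded} together with Lemmas~\ref{alphatil-lb1}--\ref{alphatil-lb2} applied to the limit. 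In case~\eqref{ass:iid} no deterministic pointwise lower bound on $a(0,x)$ is available.

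Second, and this is the main gap, your route through the merely stationary case is incomplete. You want $\bar\alpha(\xi+\zeta)\le\bar\alpha(\xi)+\bar\alpha(\zeta)$ pointwise a.s.\ and propose to get it from $T_z$-invariance of $\bar\alpha(\xi)$ for every $z\in\gplus$, deduced in turn from $|a(0,n\xi+z)-a(0,n\xi)|=o(n)$. One direction of that $o(n)$ is immediate from subadditivity, but the other needs $m\xi-z\in\gplus$ for some $m$ in order to sandwich, which fails for $\xi$ on the relative boundary of $\cA$ and even in $\ri\cA$ requires controlling shifted error terms on a single null set over countably many pairs $(z,\xi)$. The paper sidesteps this by working from the outset with $\tilde\alpha(\xi)=\inf_{t}t^{-1}\E[a(0,\xhat_t(\xi))\mid\kS_\xi]$, where $\kS_\xi$ is the $\sigma$-algebra of events invariant under $\{T_z:z\in\range_\xi\}$; the needed invariance is then built in, homogeneity and subadditivity are proved at the conditional-expectation level (Theorem~\ref{1st theorem}), and only afterwards is $\tilde\alpha$ identified with the Kingman limit $\overline\alpha$ (Theorem~\ref{a.s. theorem}) and extended to $\alpha$ on $\ri\cA$ (Theorem~\ref{alphaext}). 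Your finite-covering argument for~\eqref{shapetheorem.b} is a valid alternative to the paper's contradiction proof (Theorem~\ref{shapetheorem.aux}); both use the $\cL_{z,\rangegen}$ hypothesis exactly as you describe, though the paper makes accessibility explicit by building two rational approximants $\overline\zeta_m,\underline\zeta_m$ with coefficients perturbed by $\pm\e_1$ so that the auxiliary points lie in $\gplus(\rangegen)$ by construction rather than by appeal to openness of $\ri\cA$.
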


\begin{remark}
	If $\P$ is not ergodic under the shifts $\{T_z:z\in \range\cap\cA'\}$ for some face $\cA'$ of $\cA$, 
	then $\alpha$ and $\alpha_\infty$ can be genuinely random on $\cA'$.
\end{remark}

\begin{remark}
If $0\in\ri\Uset$ then $\cC_+$ is the same as the linear span of $\range$ and the only face of $\cC_+$ is $\cC_+$ itself. In this case, the above shape theorem holds on all of $\cC_+$.
\end{remark}

\begin{remark}
In fact, we prove  a slightly stronger version of the above theorem. See Theorem \ref{shapetheorem.aux} below. Inspection of the proof of that theorem shows that the only reason we have $\cA_\delta$ instead of $\cA$ in \eqref{lim:shape} is because  $\alpha$ and $\alpha_\infty$ are guaranteed to be continuous there. Given the continuity of $\alpha$ on all of $\cA\setminus\{0\}$, 
 \eqref{lim:shape} would hold with $\cA_\delta$ replaced by $\cA$. The same is true of $\alpha_\infty$. Following the ideas in the proof of Theorem 3.2 
in \cite{Ras-Sep-14} one should be able to prove the continuity of $\alpha$ and $\alpha_\infty$ on $\cA$ in the setting of  
Example \ref{ex:product}, when $\{(V(T_x\w,z))_{z\in\range}:x\in\cG\}$ are i.i.d.\ under $\P$, $V\in L^p$ for some $p>d$, and $0$ is an extreme point of $\cA$.
\end{remark}

As a corollary of the above theorem, we have the following point-to-point limit. 

\begin{corollary}\label{cor:shape}
Assume the setting of Theorem \ref{shapetheorem}. The following holds $\P$-almost surely: For any $\xi\in\ri\cA$ 
and any sequence $x_n\in \gplus\cap \cA$ such that $x_n/n\to\xi$ as $n\to\infty$
	\begin{align}\label{p2plim}
	\lim_{n\to\infty}\dfrac{a(0,x_n)}n=\alpha(\xi)\quad\text{and}\quad\lim_{n\to\infty}\dfrac{a_\infty(0,x_n)}n=\alpha_\infty(\xi).
	\end{align}
	\end{corollary}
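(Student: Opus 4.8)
The plan is to deduce Corollary~\ref{cor:shape} directly from part~\eqref{shapetheorem.b} of Theorem~\ref{shapetheorem} (and, for the second limit, from its counterpart in~\eqref{al_infty}), the only real point being that any sequence with $x_n/n\to\xi$ and $\xi\in\ri\cA$ eventually enters a single fixed slab $\cA_\delta$, on which the shape theorem already gives local uniformity. Throughout we work on the one full-measure event on which all conclusions of Theorem~\ref{shapetheorem} hold; since this event depends neither on $\xi$ nor on the sequence $(x_n)$, proving \eqref{p2plim} pointwise on it yields the ``for any $\xi$ and any sequence'' formulation of the corollary.

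Fix $\xi\in\ri\cA$ with $\xi\ne0$. By \cite[Theorem 18.3]{Roc-70} (as recalled in the excerpt) the face $\cA$ of $\cplus$ is the convex cone generated by the finite set $\range\cap\cA$, hence $\cA$ is closed, and so is its relative boundary $\cA\setminus\ri\cA$. Since $\xi/|\xi|_1\in\ri\cA$ avoids this closed set, there is $\delta>0$ with $\mathrm{dist}(\xi/|\xi|_1,\cA\setminus\ri\cA)>2\delta$, i.e.\ $\xi\in\cA_{2\delta}$. Now suppose $x_n\in\gplus\cap\cA$ with $x_n/n\to\xi$. Then $|x_n|_1/n\to|\xi|_1>0$, so $|x_n|_1\to\infty$ and $x_n/|x_n|_1\to\xi/|\xi|_1$, whence $\mathrm{dist}(x_n/|x_n|_1,\cA\setminus\ri\cA)\ge\delta$ and therefore $x_n\in\cA_\delta$ for all large $n$. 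Applying \eqref{lim:shape} with this $\delta$ along $(x_n)\subset\gplus\cap\cA_\delta$ gives $a(0,x_n)=\alpha(x_n)+\e_n|x_n|_1$ with $\e_n\to0$. Since $x_n/n\in\cA$ converges to $\xi\in\ri\cA$ and $\ri\cA$ is relatively open, eventually $x_n/n\in\ri\cA$; by homogeneity $\alpha(x_n)/n=\alpha(x_n/n)$, which tends to $\alpha(\xi)$ by continuity of $\alpha$ on $\ri\cA$, while $\e_n|x_n|_1/n\to0$. Hence $a(0,x_n)/n\to\alpha(\xi)$, and the identical computation with $a_\infty$, $\alpha_\infty$ and \eqref{al_infty} gives the second limit in \eqref{p2plim}.

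It remains to treat $\xi=0$, which forces $0\in\ri\cA$ and hence $\cA$ to be a linear subspace (a face that is relatively open equals its own relative interior, and a convex cone equal to its relative interior is a linear subspace); thus $\cA\setminus\ri\cA=\varnothing$, so $\cA_\delta=\cA\setminus\{0\}$ for every $\delta>0$ and \eqref{lim:shape} holds over all of $\gplus\cap\cA\setminus\{0\}$. Given $x_n\in\gplus\cap\cA$ with $x_n/n\to0$, split according to whether $|x_n|_1\to\infty$ or $(x_n)$ stays bounded: along the former, \eqref{lim:shape} and \eqref{alpha<Cxi} give $|a(0,x_n)|\le C|x_n|_1+o(|x_n|_1)=o(n)$; along the latter, $a(0,x_n)$ takes finitely many finite values (each $a(0,x)$, $x\in\gplus$, being finite under the hypotheses of Theorem~\ref{shapetheorem}), so again $a(0,x_n)/n\to0$. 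Since $\alpha(0)=0$ by homogeneity, $a(0,x_n)/n\to\alpha(0)$, and likewise for $a_\infty$. The whole argument is soft; the only genuine ingredient is the convex-geometry observation that relative-interior directions lie in some $\cA_\delta$, and I expect the only minor friction to be the bookkeeping around the degenerate direction $\xi=0$.
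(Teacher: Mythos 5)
Your proposal is correct and follows essentially the same route as the paper's proof: show that the sequence eventually lies in some fixed $\cA_\delta$ (treating $\xi\ne0$ via positive distance of $\xi/|\xi|_1$ from the relative boundary, and $\xi=0$ by observing that $0\in\ri\cA$ makes $\cA\setminus\ri\cA$ empty, with a separate elementary treatment of bounded subsequences), then invoke \eqref{lim:shape}, homogeneity, and continuity of $\alpha$ on $\ri\cA$. The only differences are cosmetic: for $\xi=0$ the paper argues via compactness of the unit $\ell^1$-sphere rather than identifying $\cA$ as a linear subspace, and for the bounded-$|x_n|_1$ case it uses finiteness of the finitely many values of $a(0,x)$ together with continuity of $\alpha$ at $0$ rather than the bound \eqref{alpha<Cxi}.
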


Our second main result gives variational formulas for $\alpha$ and $\alpha_\infty$ in terms of cocycles, which we define next. Let $\cA\ne\{0\}$ be a face of $\cC_+$.
Let $\range'=\cA\cap\range$. Recall that $\cG(\range')$ is the additive subgroup of $\Z^d$ generated by $\range'$.

\begin{definition}
	A measurable function $B:\Omega\times \cG(\range')^2\to \R$ is said to be a cocycle if 
	\begin{align*}
	B(\w,x,y)+B(\w,y,z)=B(\w,x,z)\quad\text{for $\P$-a.e.\ $\w$ and all $x,y,z\in\cG(\range')$}.
	\end{align*}
	$B$ is said to be covariant if
	\begin{align*}
	B(\w,x+z,y+z)=B(T_z\w,x,y)\quad\text{for $\P$-a.e.\ $\w$ and all $x,y,z\in\cG(\range')$}.
	\end{align*}
	$B$ is said to be $L^1$ if $\E[\abs{B(\w,x,y)}]<\infty$ for all $x,y\in\cG(\range')$.
\end{definition}
Let $\cK_\cA$ denote the space of $L^1$ covariant cocycles as defined above. Let $\cI_\cA$ denote the $\sigma$-algebra of events $A\in\kS$ such that $T_x^{-1}A=A$ for all $x\in\cG(\range')$. For $B\in \cK_\cA$, 
$\E[B(0,x)\,|\,\cI_\cA]$ is an additive function of $x\in\cG(\range')$.  Furthermore, by \cite[Proposition P1 on page 65]{Spi-76} the additive group $\cG(\range')$ is linearly isomorphic to a $\Z^k$ for some $k\le d$. 
It follows 
that there exists a random vector $m(B)\in\R^d$ such that $\P$-almost surely,
\[ \E [B(0,x)\,|\,\cI_\cA] = m(B)\cdot x \quad \text{for all }x\in \cG(\range').\]

Note that $m(B)$ is not necessarily unique unless $\range'$ linearly spans $\R^d$, but
the inner products $m(B)\cdot x$ for $x\in\cG(\range')$ are uniquely defined. Set $h(B)=-m(B)$. Define
\[\cK^+_\cA(V) = \Bigl\{ B\in \cK_\cA :   \sum_{{z}\in \range'}p(z) e^{-V(\w,z)-B(\w,0,z)} \le 1 \text{ for }\P\text{- almost every }\w \Bigr\}\]
and
\[\cK^+_{\cA,\infty}(V) = \Bigl\{ B\in \cK_\cA :   \min_{{z}\in \range'}\{V(\w,z)+B(\w,0,z)\}\ge0\text{ for }\P\text{- almost every }\w \Bigr\}.\] 

 Our second main result gives the cocycle variational formula mentioned in the introduction. 

\begin{theorem}\label{VarForm}
	Assume the setting of Theorem \ref{shapetheorem}.
	Then, for any face $\cA\ne\{0\}$ of $\cC_+$ and any $B\in\cK^+_\cA(V)$ we have with $\P$-probability one, for any $\xi\in\cA$ 
	\begin{align}\label{alpha>h.xi}
	\alpha(\xi)\ge h(B)\cdot\xi.
	\end{align}
    Assume further that $\P$ is ergodic under $\{T_z:z\in\range\cap\cA\}$. Then for any $\xi\in\ri\cA$
\begin{align}\label{alpha-var}
	\alpha(\xi) = \sup_{B\in \cK^+_\cA(V)} h(B)\cdot\xi
	\end{align}
and there exists a $B\in\cK^+_\cA(V)$ such that $\alpha(\xi)=h(B)\cdot\xi$. 

The same results hold if $\alpha$ and $\cK^+_\cA$ are replaced, respectively, by $\alpha_\infty$ and $\cK^+_{\cA,\infty}$.
\end{theorem}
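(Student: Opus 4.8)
The plan is to prove the two inequalities in \eqref{alpha-var} separately. First I will establish \eqref{alpha>h.xi} for every $B\in\cK^+_\cA(V)$, with no ergodicity hypothesis. Then, assuming ergodicity, I will construct a single $B^\star\in\cK^+_\cA(V)$ with $\alpha(\xi)=h(B^\star)\cdot\xi$ at the prescribed $\xi\in\ri\cA$; combined with \eqref{alpha>h.xi} this gives \eqref{alpha-var} and the attainment statement. Write $\range'=\range\cap\cA$. The $\alpha_\infty$ statement follows from the same scheme with the exponential path weights replaced by path sums, the soft minimum $-\log\sum_z p(z)e^{-(\cdot)}$ replaced by the minimum, and the sub-probability constraint $\sum_z p(z)e^{-V-B}\le1$ replaced by $\min_z(V+B)\ge0$; I describe only the $\alpha$ case.

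\emph{Lower bound.} The structural input is that, since $\cA$ is a face of the cone $\cplus=\cplus(\range)$, every admissible path ending in $\cA$ uses only steps from $\range'$: if $x=z_1+\dots+z_n\in\cA$ with $z_i\in\range$, then each $z_i$ appears with positive coefficient in a representation of a point of a face, hence $z_i\in\cA$, i.e.\ $z_i\in\range'$ \cite[Theorems~18.1 and 18.3]{Roc-70}. Thus on $\{\tau_x<\infty\}$ with $x\in\gplus\cap\cA$ the walk stays in $\cG(\range')$, so $B(\w,X_k,X_{k+1})=B(T_{X_k}\w,0,Z_{k+1})$ is defined and telescopes along the path to $B(\w,0,x)$, and \eqref{a-def} becomes
\begin{equation*}
e^{-a(\w,0,x)}=e^{B(\w,0,x)}\,E_0\Bigl[\exp\Bigl\{-\!\sum_{k=0}^{\tau_x-1}\!\bigl(V(T_{X_k}\w,Z_{k+1})+B(T_{X_k}\w,0,Z_{k+1})\bigr)\Bigr\}\one_{\{\tau_x<\infty\}}\Bigr].
\end{equation*}
Membership $B\in\cK^+_\cA(V)$ says exactly that $z\mapsto p(z)e^{-V(\w,z)-B(\w,0,z)}$ is a sub-probability kernel on $\range'$, so the expectation above is the first-passage probability of $x$ under the corresponding sub-Markov chain and is at most $1$; hence $a(\w,0,x)\ge-B(\w,0,x)$ $\P$-a.s.\ for every $x\in\gplus(\range')\cap\cA$. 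To take limits, fix $\xi\in\ri\cA$ and $x_n\in\gplus(\range')$ with $x_n/n\to\xi$; by Corollary \ref{cor:shape} $a(0,x_n)/n\to\alpha(\xi)$, and $\alpha(\xi)$ is $\cI_\cA$-measurable because the triangle inequality for $a$, the sublinearity $a(\w,x_n,x_n+z)=o(n)$ (Borel--Cantelli), and $T$-invariance of $\P$ force $\alpha(\xi)\circ T_z=\alpha(\xi)$ for $z\in\range'$. Taking conditional expectation given $\cI_\cA$ in $a(0,x_n)\ge-B(0,x_n)$ (the family $a(0,x_n)/n$ being uniformly integrable via $\abs{\alpha(\xi)}\le C\abs{\xi}_1$ with $\E[C]<\infty$ and an upper bound along a fixed path), dividing by $n$, letting $n\to\infty$, and using $\E[B(0,x)\mid\cI_\cA]=m(B)\cdot x=-h(B)\cdot x$ yields $\alpha(\xi)\ge h(B)\cdot\xi$ on $\ri\cA$, and on $\cA$ by lower semicontinuity. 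When $\P$ is ergodic under $\{T_z:z\in\range'\}$ the conditioning is vacuous.

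\emph{Equality and attainment.} Assume ergodicity, fix $\xi\in\ri\cA$, and pick $v_n\in\gplus(\range')$ with $v_n/n\to\xi$. Set $B_n(\w,x,y)=a(\w,y,v_n)-a(\w,x,v_n)$ for $x,y\in\cG(\range')$; these are cocycles in $(x,y)$ controlled by $\abs{B_n(\w,x,y)}\le\max\{a(\w,x,y),a(\w,y,x)\}$, an a.s.\ finite random variable independent of $n$. Extract (diagonally over the countable set $\cG(\range')^2$) a subsequential limit $\hat B=\lim_k B_{n_k}$; it is a cocycle lying in $\cK^+_\cA(V)$ because the one-step inequality $e^{-a(\w,0,v_n)}\ge\sum_{z\in\range'}p(z)e^{-V(\w,z)-a(\w,z,v_n)}$ (restrict \eqref{a-def} according to the first step) rearranges to $\sum_{z\in\range'}p(z)e^{-V(\w,z)-B_n(\w,0,z)}\le1$ and survives the limit. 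It remains to see $h(\hat B)\cdot\xi=\alpha(\xi)$: the general lower bound gives $h(\hat B)\cdot\xi\le\alpha(\xi)$, while the reverse inequality follows by choosing $v_n$ well — for rational $\xi$, take $v_n=N_n\xi$ with $N_n$ an approximate minimizer of the subadditive correction $N\mapsto\E[a(0,N\xi)]-N\alpha(\xi)\ge0$ over a growing window, so that $\E[\hat B(0,N\xi)]\le-N\alpha(\xi)+o(N)$ and hence $h(\hat B)\cdot\xi=-\lim_N\E[\hat B(0,N\xi)]/N\ge\alpha(\xi)$ — followed by a density argument in $\xi$ and a compactness step for general $\xi\in\ri\cA$. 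With $B^\star=\hat B$, the lower bound then forces $\alpha(\xi)=h(B^\star)\cdot\xi=\sup_{B\in\cK^+_\cA(V)}h(B)\cdot\xi$.

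\emph{Main obstacle.} Two steps carry the weight. First, the extracted limit $\hat B$ is a priori only a cocycle, not covariant, so it is not yet in $\cK_\cA$; restoring covariance requires averaging over the shifts $\{T_z:z\in\cG(\range')\}$ and a further passage to the limit, and --- since $\P$ is assumed only stationary, and we deliberately avoid invoking the ergodic decomposition --- this averaging must be carried out within the original space $(\Omega,\kS,\P)$ rather than on an extension, and one must check that it preserves both the constraint and the relevant mean. This is exactly the measure-theoretic subtlety flagged in the introduction and is, in my view, the crux. Second, pinning $h(\hat B)\cdot\xi$ to the exact value $\alpha(\xi)$ rather than merely bounding it forces the careful choice of $v_n$ above and, when $\alpha$ is not differentiable at $\xi$, an additional limiting argument over such sequences.
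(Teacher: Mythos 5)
Your lower bound is essentially the paper's argument: tilt the weight in \eqref{a-def} by the cocycle, observe that $p(z)e^{-V-B(0,z)}$ is a sub-Markov kernel so the tilted expectation is at most $1$, and identify the limit of $B(0,x_n)/n$ with $-h(B)\cdot\xi$ (the paper does this by Birkhoff's theorem along the single shift $T_{x_\ell}$ for a rational direction plus a cited lemma identifying $\E[B(0,x_\ell)\mid\cI_{x_\ell}]$ with $-h(B)\cdot x_\ell$, then extends to all $\xi\in\cA$ by countable density and continuity; your conditional-expectation route is a minor variant, though your uniform-integrability claim for $a(0,x_n)/n$ is shakier than you suggest under hypothesis (ii), where only an almost-sure liminf lower bound on $a(0,nx)/n$ is available, not an $L^1$ bound).

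The upper bound and the attainment, however, have genuine gaps, and they sit exactly at the point you flag as the ``crux'' but do not resolve. First, the compactness input fails: your bound $\abs{B_n(\w,x,y)}\le\max\{a(\w,x,y),a(\w,y,x)\}$ uses subadditivity in both orders, but in this generality $x$ need not be reachable from $y$ (e.g.\ any strictly directed model, Example \ref{ex:dir-iid}), so one of the two quantities is simply undefined and $B_n(\w,x,y)=a(\w,y,v_n)-a(\w,x,v_n)$ has no pointwise upper bound uniform in $n$. Second, even where pointwise bounds hold, an almost-sure ``diagonal'' extraction produces a subsequence depending on $\w$, so $\hat B$ is not a measurable function of $\w$; and since each $B_n$ is not covariant (shifting $\w$ moves the target $v_n$), a limit along an $\w$-dependent subsequence has no covariance either, so $\hat B\notin\cK_\cA$ and $h(\hat B)$ is not even defined. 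The known ways to repair this are either to pass to weak limits of the environment-plus-gradients process on an enlarged space (which yields covariant cocycles only on an extension of $(\Omega,\kS,\P)$, precisely what this paper is set up to avoid), or to work with weak $L^1(\P)$ compactness of the increments, which requires uniform integrability that you have not established; without it one cannot conclude $\hat B(0,z)\in L^1$, nor pass $\E[B_n(0,z)]$ to the limit, so your identification $h(\hat B)\cdot\xi\ge\alpha(\xi)$ via ``approximate minimizers of the subadditive correction'' is unsupported. The paper's proof is built to circumvent exactly these obstructions by a different construction: ergodicity makes $\alpha$ deterministic, convex duality produces tilts $h_j$ with $\alpha^*(h_j)=0$ and $h_j\cdot\xi\ge\alpha(\xi)-1/j$; the tilted, damped point-to-level series (controlled through Theorem \ref{newshapetheorem3} and Lemma \ref{-a>La}) defines finite functions $g_j$ whose gradients $B_j(\w,x,y)=g_j(T_x\w)-g_j(T_y\w)-h_j\cdot(y-x)$ are genuinely covariant cocycles satisfying $\sum_z p(z)e^{-V-B_j(0,z)}\le e^{1/j}$ with $\E[B_j(0,z)]=-h_j\cdot z$; then the Kosygina--Varadhan decomposition of $B_j^+$ together with Mazur's lemma handles the possible loss of mass and yields an a.s.\ convergent sequence of convex combinations whose limit is an $L^1$ covariant cocycle in $\cK^+_\cA(V)$ with $h(B)\cdot\xi\ge\alpha(\xi)$. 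Your proposal replaces this entire mechanism with Busemann-type limits of $a(\cdot,v_n)$, which is a legitimately different strategy in undirected, UI-friendly settings, but as written it does not close the measurability, covariance, integrability, or directedness issues that the paper's construction is designed to avoid.
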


\begin{remark}
When $\range=\{\pm e_1,\dots,\pm e_d\}$ a variational formula in terms of cocycles was proved in \cite{Kri-16}. Their formula is different from \eqref{alpha-var}. Our variational formula \eqref{alpha-var} for FPP model with $\range\{\pm e_1,\dotsc,\pm e_d\}$ will appear in the forthcoming \cite{Kri-Ras-Sep-20-puppy-}, where also the connection to the formula in \cite{Kri-16} is worked out.
\end{remark}

\begin{remark}
The proofs of Theorem \ref{shapetheorem}, Corollary \ref{cor:shape}, and Theorem \ref{VarForm} for the case of $a$, defined in \eqref{a-def}, are almost identical to those of the case of $a_\infty$, defined in \eqref{a-FPP}. 
We therefore only present the proofs in the former case, while highlighting where significant changes need to be made if there are any. \end{remark}

Another quantity of interest is the Green's function:
for $x,y$ with $y-x\in\gplus$ define
\[g(x,y)=\one\{y=x\}+\sum_{m=1}^\infty E_{x}\Bigl[ \exp\Bigl\{-\sum_{k=0}^{m-1} V(T_{X_k}\w,Z_{k+1})\Bigr\} \one_{\{X_m=y \}}  \Bigr].\]

For $\e>0$ let   
    \begin{align}\label{def:R}
    R_\e=\min\{\abs{x}_1:x\in\cG_+(\rangez),\,V(T_x\w,z)\ge\e\text{ for some }z\in\rangez\}.
    \end{align}
Our third main result is the following shape theorem for $g$.

\begin{theorem}\label{shape-green}
Assume the setting of Theorem \ref{shapetheorem}. Assume also that either $P_0$ is transient or there exists an $\e>0$ such that $\E[R^d_\e]<\infty$.  Then for all $\delta>0$ 
	\begin{equation}
	\varlimsup_{ \substack{ \abs{x}_1\to \infty \\ 
			x\in \gplus\cap \cA_\delta} } \dfrac{\abs{ \log g(0,x)+\alpha(x) }}{\abs{x}_1}= 0. \label{lim:shape:green}
	\end{equation}
\end{theorem}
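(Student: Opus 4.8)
The plan is to reduce the Green's function to the point-to-point partition function $a$ of Theorem~\ref{shapetheorem} by a first-passage decomposition, and then to control the resulting ``return correction'' by means of the hypothesis. Using the Markov property of the reference walk $P_x$ and the additivity of $V$ along paths, one splits a path from $0$ to $x\in\gplus\setminus\{0\}$ at the first hitting time $\tau_x$ and a loop at $x$ into excursions that are i.i.d.\ under $P_x$; this yields $g(\w,0,x)=e^{-a(\w,0,x)}\sum_{r\ge0}f(x)^r$, where $f(x):=E_x[\exp\{-\sum_{k=0}^{\tau_x-1}V(T_{X_k}\w,Z_{k+1})\}\one\{\tau_x<\infty\}]$ is the weighted first-return weight at $x$. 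Hence, once $f(x)<1$, $\log g(0,x)+\alpha(x)=(\alpha(x)-a(0,x))+\log\tfrac1{1-f(x)}$ with $\log\tfrac1{1-f(x)}\ge0$; since $a(0,x)$ is finite for $x\in\gplus\cap\cA_\delta$ with $|x|_1$ large by Theorem~\ref{shapetheorem}, the bound \eqref{lim:shape:green} will follow from \eqref{lim:shape} provided I show that, $\P$-almost surely, $f(x)<1$ for every $x\in\gplus$ and $\frac1{|x|_1}\log\frac1{1-f(x)}\to0$ along $x\in\gplus\cap\cA_\delta$, $|x|_1\to\infty$.

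If $P_0$ is transient this is easy; note that this case includes assumption~(\ref{ass:iid}), where in fact $\tau_x=\infty$ $P_x$-a.s.\ (there is $\hat u$ with $\hat u\cdot z>0$ for all $z\in\range$) and $f(x)=0$. In general, set $q:=P_0(\tau_0=\infty)>0$, so $P_x(\tau_x=\infty)=q$ for all $x$ by homogeneity. Every loop $x_{0:\ell}$ at $x$ uses only increments in $\rangez$: then $0=\frac1\ell\sum_i z_i$ is a convex combination of points of $\range\subseteq\Uset$, so by \cite[Theorem 18.1]{Roc-70} each $z_i$ lies in the face $\Uset_0$. Under assumption~(\ref{ass:V>c}) we have $V\ge0$ on $\rangez$, so the energy of any loop is nonnegative and $f(x)\le P_x(\tau_x<\infty)=1-q<1$, whence $\log\tfrac1{1-f(x)}\le\log\tfrac1q$ is a bounded constant and the claim is immediate.

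The substantive case is $P_0$ non-transient. Then its mean $\sum_z p(z)z$ vanishes (else $|X_n|\to\infty$), so $0\in\ri\Uset$; consequently $\rangez=\range$, $V\ge0$ everywhere, any two points of $\cG$ are joined by an admissible path \cite[Corollary A.3]{Ras-Sep-Yil-13}, and $P_x(\tau_x<\infty)=1$, so $1-f(x)=E_x\bigl[1-\exp\{-\sum_{k=0}^{\tau_x-1}V\}\bigr]$ with nonnegative exponent. By hypothesis $\E[R_\e^d]<\infty$ for some $\e>0$, so $R_\e<\infty$ $\P$-a.s.\ and, by stationarity, $\P$-a.s.\ $R_\e(T_x\w)<\infty$ for all $x\in\gplus$. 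Fix such $\w$ and $x$ and write $R=R_\e(T_x\w)$: there are $y^*\in x+\gplus$ with $|y^*-x|_1=R$ and $z^*\in\range$ with $V(T_{y^*}\w,z^*)\ge\e$. Since $0\in\ri\Uset$, every $w\in\gplus$ is reachable from $0$ by an admissible path of length at most $C_0(1+|w|_1)$ for a constant $C_0=C_0(\range)$; using this I build an admissible loop at $x$ of length $\le C(1+R)$ traversing the edge $(y^*,y^*+z^*)$, and trimming it at the last visit to $x$ before and the first visit after that edge produces an admissible loop $\gamma^*$ at $x$ of length $L\le C(1+R)$ that visits $x$ only at its endpoints. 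On the event that the walk from $x$ follows $\gamma^*$ — of $P_x$-probability $\ge p_{\min}^{\,L}$ with $p_{\min}:=\min_{z\in\range}p(z)>0$ — we have $\tau_x=L$ and $\sum_{k=0}^{\tau_x-1}V\ge V(T_{y^*}\w,z^*)\ge\e$, so $1-f(x)\ge(1-e^{-\e})\,p_{\min}^{\,C(1+R_\e(T_x\w))}>0$ (in particular $f(x)<1$). It then remains to check that $R_\e(T_x\w)=o(|x|_1)$ along $x\in\gplus$, $|x|_1\to\infty$, $\P$-a.s.; for fixed $j\in\N$, by stationarity, Markov's inequality, and $\#\{x\in\Z^d:|x|_1=n\}=O(n^{d-1})$, $\sum_{x\in\gplus,\,|x|_1\ge1}\P(R_\e(T_x\w)>|x|_1/j)=\sum_{x}\P(R_\e>|x|_1/j)\lesssim_d j^d\sum_{m\ge0}(1+m)^{d-1}\P(R_\e>m)\asymp_d j^d\,\E[R_\e^d]<\infty$, so by Borel--Cantelli a.s.\ only finitely many $x\in\gplus$ satisfy $R_\e(T_x\w)>|x|_1/j$; intersecting over $j$ gives the claim, and combining with the previous display completes the proof.

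The hard part is the non-transient case: constructing the short ``good'' loop $\gamma^*$ requires the linear-reachability geometry of $\gplus$ (available precisely because $0\in\ri\Uset$ there) together with the trimming device that forces the excursion to end exactly at the first return to $x$, and the Borel--Cantelli step hinges on $\E[R_\e^d]<\infty$ being exactly balanced against the $O(n^{d-1})$ growth of $\ell^1$-spheres — the exponent $d$ in the moment condition is not slack. The reduction in the first step and the transient case are routine.
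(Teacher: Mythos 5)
Your proposal is correct and takes essentially the same route as the paper: the first-passage identity $g(0,x)=e^{-a(0,x)}\,g(x,x)$ with $g(x,x)=\sum_{r\ge0}f(x)^r$, the bound $\log g(x,x)\le C\bigl(1+R_\e(T_x\w)\bigr)$ obtained by forcing the walk through a short loop that crosses a site within distance $R_\e(T_x\w)$ where the potential is at least $\e$, and a Borel--Cantelli argument balancing $\E[R_\e^d]<\infty$ against the $O(r^{d-1})$ size of $\ell^1$-spheres, all combined with Theorem \ref{shapetheorem}. The only differences are presentational (you root the loop at $x$ and apply Borel--Cantelli to $R_\e(T_x\w)/\abs{x}_1$ rather than to $\log g(y,y)/\abs{y}_1$ after a tail bound at the origin, and you note that recurrence forces $0\in\ri\Uset$, whereas the paper works within $\cG_+(\rangez)$ using only $0\in\Uset$), and they do not change the argument.
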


\begin{remark}
If there exists an $\e>0$ such that 
$\P\{V(\w,z)\ge\e\}=1$ for all $z\in\rangez$, then $R_\e=0$, $\P$-almost surely. Also, if $\P$ has a finite range of dependence and $V$ is local and satisfies \eqref{V>0} and $\P\{V(\w,z)=0\ \forall z\in\rangez\}=0$, then there exists an $\e>0$ and $z_0\in\rangez$ such that $\P\{V(\w,z_0)<\e\}<1$. In this case
\[\P(R_\e\ge r)\le\P\{V(T_x\w,z_0)<\e\ \ \forall x\in\cG_+(\rangez):\abs{x}_1<r\}\le\P\{V(\w,z_0)<\e\}^{Cr^d}.\]
Hence $R_\e$ has all moments.
\end{remark}\medskip

Next, we give the proofs of the above claims. In what follows, $C$ will denote a chameleon constant that may change value from term to term. Some technical results are deferred to the Appendix.

 \section{The quenched Lyapunov exponent and the proofs of Theorem \ref{shapetheorem} and Corollary \ref{cor:shape}}\label{sec:Lyap}

The shape theorem for the Lyapunov exponent is proved following a by-now standard route: an application of the subadditive ergodic theorem gives a law of large numbers in rational directions and then the moment assumption on the potential is used along with subadditivity to control the error arising from passing to irrational directions. There are two main technical difficulties. First, we do not assume ergodicity and hence we need to handle the measurability and regularity issues that come with having to define the Lyapunov exponent as a stochastic process. Second, having an arbitrary set of steps makes it nontrivial to construct paths between the various points of the lattice, which is needed when controlling the error coming from passing from rational directions to irrational ones.
\smallskip
 
Recall that we will work with the case of $a$, defined in \eqref{a-def}. The reader can check along the way that, with the appropriate definitions, the analogous results for the case of $a_\infty$, defined \eqref{a-FPP}, go through without much change.\smallskip 
 
 Throughout this section, we will assume that condition \eqref{V>0} holds. Although note that this condition is  vacuous when $0\not\in\Uset$.
 
 We start with the subadditivity and finiteness of $a$. For this, we need the following preliminary lemma.

\begin{lemma}\label{lm:loops}
	Suppose $\sum_{i=1}^kz_i=0$ for some $z_i\in \range.$ Then $z_i\in \rangez$ for all $i.$ Furthermore, if $x=\sum_{i=1}^jz_i$ for some $j\in \lbrace 1,\dots,k-1\rbrace$ and $x=\sum_{\ell=1}^m\tilde{z}_\ell$ or $-x=\sum_{\ell=1}^m\tilde{z}_\ell$ for some $\tilde{z}_\ell \in \range$ then $\tilde{z}_\ell \in\rangez$ for all $\ell.$
\end{lemma}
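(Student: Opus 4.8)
The plan is to exploit the characterization of $\rangez = \range \cap \Uset_0$, where $\Uset_0$ is the unique face of $\Uset = \co\,\range$ whose relative interior contains $0$. First I would recall from the convex-analysis facts collected before Theorem \ref{shapetheorem} that if $0 \in \Uset$ then $\Uset_0$ is a face of $\Uset$ with $0 \in \ri\,\Uset_0$, and that by \cite[Theorem 18.3]{Roc-70} applied to the cone $\cplus$ (or directly to $\Uset$), $\Uset_0$ is the convex hull of $\range \cap \Uset_0 = \rangez$. The key membership criterion is: a point $z \in \range$ lies in $\rangez$ if and only if $z$ appears in some representation of $0$ (equivalently, of any point of $\ri\,\Uset_0$) as a convex combination of elements of $\range$ — this is \cite[Theorem 18.1]{Roc-70}, since $0$ lies in the face $\Uset_0$, so any convex representation of $0$ by elements of $\range$ uses only elements of $\Uset_0$, hence only elements of $\rangez$; conversely membership in $\rangez \subset \Uset_0$ with $0 \in \ri\,\Uset_0$ lets one write $0$ as a convex combination involving $z$.

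For the first assertion, suppose $\sum_{i=1}^k z_i = 0$ with $z_i \in \range$. Dividing by $k$ exhibits $0 = \frac1k\sum_{i=1}^k z_i$ as a convex combination of elements of $\range$; in particular $0 \in \Uset$, so $\Uset_0$ is well defined and nonempty. By \cite[Theorem 18.1]{Roc-70}, every $z_i$ appearing in this convex combination belongs to the face $\Uset_0$, hence $z_i \in \range \cap \Uset_0 = \rangez$ for all $i$. (If instead $0 \notin \Uset$ there are no loops at all and the statement is vacuous; but the loop hypothesis forces $0 \in \Uset$.)

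For the second assertion, let $x = \sum_{i=1}^j z_i$ with $1 \le j \le k-1$. Then $-x = \sum_{i=j+1}^k z_i$, so both $\tfrac1j x \in \Uset$ and $-\tfrac{1}{k-j} x \in \Uset$. Now if $x = \sum_{\ell=1}^m \tilde z_\ell$ with $\tilde z_\ell \in \range$, write $0$ as a convex combination of elements of $\range$ that includes each $\tilde z_\ell$: indeed $0 = \sum_{\ell=1}^m \tilde z_\ell + \sum_{i=j+1}^k z_i$ has $m + (k-j)$ terms, all in $\range$, summing to $x - x = 0$, so dividing by $m+k-j$ realizes $0$ as a convex combination of $\range$ involving every $\tilde z_\ell$ (and every $z_i$, $i > j$). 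By \cite[Theorem 18.1]{Roc-70} again, each $\tilde z_\ell \in \Uset_0$, i.e. $\tilde z_\ell \in \rangez$. The case $-x = \sum_{\ell=1}^m \tilde z_\ell$ is symmetric: then $0 = \sum_{\ell=1}^m \tilde z_\ell + \sum_{i=1}^j z_i$ is a sum of $m+j$ elements of $\range$ equal to $0$, and the same argument gives $\tilde z_\ell \in \rangez$.

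I do not expect a serious obstacle here; the only point requiring a little care is making the reduction to \cite[Theorem 18.3]{Roc-70}/\cite[Theorem 18.1]{Roc-70} clean — specifically that $\rangez$ as defined via $\cC_\xi$ at $\xi = 0$ genuinely coincides with $\range \cap \Uset_0$, which is already recorded in the text (``for $\xi=0$, if $0\not\in\Uset$, then $\cC_0=\{0\}$ and $\Usetz=\rangez=\varnothing$''), and that the face $\Uset_0$ containing $0$ in its relative interior absorbs every element of any convex representation of $0$. Both are immediate from the cited corollaries of \cite{Roc-70}, so the proof is essentially a bookkeeping exercise once the right characterization of $\rangez$ is in hand.
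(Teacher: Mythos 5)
Your proof is correct and follows essentially the same route as the paper's: the first assertion is obtained by writing $0=\frac1k\sum_i z_i$ and invoking \cite[Theorem 18.1]{Roc-70} for the face $\Uset_0$, and the second assertion is reduced to the first by concatenating the $\tilde z_\ell$ with the complementary portion of the original loop (using $-x=\sum_{i=j+1}^k z_i$ or $x=\sum_{i=1}^j z_i$) to form a new loop. The extra bookkeeping you include about $\rangez=\range\cap\Uset_0$ and \cite[Theorem 18.3]{Roc-70} is fine but not needed beyond what the paper already records.
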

\begin{proof}
	$\sum_{i=1}^k \frac{1}{k} 	z_i =0\in \Usetz$ implies $z_i\in \Usetz$ for all $i.$ See Theorem 18.1 in \cite{Roc-70}. Next, we write $-x=\sum_{i=j+1}^k z_i$ and hence  $\sum_{\ell=1}^m\tilde{z}_\ell+ \sum_{i=j+1}^k z_i =0.$ By the part we proved already, $\tilde{z}_\ell \in\rangez$ for all $\ell.$ The other case is similar. 
\end{proof}

 \begin{lemma}\label{lemma1}
 	For $\P$-almost every $\w$ and any $x,y,z \in \Z^d $ such that $y-x, z-y\in  \gplus$, 
	\begin{equation} \label{subadd. of a}
	a(\w,x,z)\le a(\w,x,y)+a(\w,y,z). 
	\end{equation}
\end{lemma}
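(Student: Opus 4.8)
The plan is to prove the subadditivity \eqref{subadd. of a} by concatenating paths. The quantity $e^{-a(\w,x,z)}$ is defined in \eqref{a-def} as an expectation over walks started at $x$ that reach $z$ for the first time at the (finite) hitting time $\tau_z$. First I would split the event $\{\tau_z<\infty\}$ according to whether the walk visits $y$ strictly before time $\tau_z$. On the part of this event where $y$ is visited before $z$, I would use the strong Markov property at the first hitting time $\tau_y$: the walk started at $x$ first reaches $y$ at some finite time, and then, from $y$, must reach $z$ for the first time. Since the potential sum $\sum_{k=0}^{\tau_z-1}V(T_{X_k}\w,Z_{k+1})$ splits additively at time $\tau_y$ into the contribution up to $\tau_y$ (a function of the walk up to hitting $y$) and the contribution from $\tau_y$ to $\tau_z$ (which, after shifting time, has exactly the law of the potential sum for a fresh walk from $y$ to $z$), the strong Markov property gives
\begin{align*}
E_x\Bigl[\exp\Bigl\{-\sum_{k=0}^{\tau_z-1}V(T_{X_k}\w,Z_{k+1})\Bigr\}\one\{\tau_y<\tau_z<\infty\}\Bigr]
= E_x\Bigl[\exp\Bigl\{-\sum_{k=0}^{\tau_y-1}V(T_{X_k}\w,Z_{k+1})\Bigr\}\one\{\tau_y<\infty\}\,\phi(\w,y,z)\Bigr],
\end{align*}
where $\phi(\w,y,z)=E_y[\exp\{-\sum_{k=0}^{\tau_z-1}V(T_{X_k}\w,Z_{k+1})\}\one\{\tau_z<\infty\}]=e^{-a(\w,y,z)}$. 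Hence this piece equals $e^{-a(\w,x,y)}e^{-a(\w,y,z)}$.

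Since the full expectation defining $e^{-a(\w,x,z)}$ is at least this one piece (all summands are nonnegative, as $e^{-V}>0$), we get $e^{-a(\w,x,z)}\ge e^{-a(\w,x,y)-a(\w,y,z)}$, which is exactly \eqref{subadd. of a} after taking $-\log$. I would also dispose of the degenerate cases: if $x=y$, $y=z$, or $x=z$ then \eqref{subadd. of a} is immediate from the convention $a(\w,x,x)=0$ together with nonnegativity of $a$ — and here is where one needs the standing assumption \eqref{V>0} (via Lemma \ref{lm:loops}): when $x=z$ but $y\neq x$, the relation $y-x\in\gplus$ and $z-y=x-y\in\gplus$ forces, by Lemma \ref{lm:loops}, every increment along the relevant loops to lie in $\rangez$, so along any admissible path realizing the walk the potential is nonnegative; thus $a(\w,x,y),a(\w,y,z)\ge0=a(\w,x,z)$ and subadditivity holds trivially. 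More generally nonnegativity of $a(\w,x,y)$ whenever $y-x\in\gplus$ follows the same way: either $y=x$, or $y-x\neq0$ is a sum of elements of $\range$, and on the event $\{\tau_y<\infty\}$ the walk traverses such admissible steps so the exponent is $\le1$ in expectation only if... — actually here one should be careful, nonnegativity of $a$ in general is not needed for the main argument; only the $x=z$ degenerate case requires it, and that is handled by Lemma \ref{lm:loops} as just described.

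The one genuinely delicate point — the "main obstacle" — is the measure-theoretic bookkeeping of the $\P$-almost-sure qualifier: the identity above holds for each fixed triple $(x,y,z)$ for $\P$-a.e.\ $\w$, and since $\Z^d\times\Z^d\times\Z^d$ is countable one then intersects the countably many full-measure events to get a single full-measure event on which \eqref{subadd. of a} holds simultaneously for all admissible $(x,y,z)$. One must also make sure the strong Markov step is applied correctly: the potential term at time $k$ depends on $T_{X_k}\w$ and on the increment $Z_{k+1}$, so after conditioning on $\mathcal F_{\tau_y}$ and using $X_{\tau_y}=y$, the post-$\tau_y$ sum is $\sum_{j\ge0}V(T_{X_{\tau_y+j}}\w,Z_{\tau_y+j+1})$ run until the first $j$ with $X_{\tau_y+j}=z$, which by time-homogeneity of the walk has precisely the law under $P_y$ of $\sum_{k=0}^{\tau_z-1}V(T_{X_k}\w,Z_{k+1})$ — note crucially that the environment $\w$ is frozen (quenched), so no shift of $\w$ is involved and the factorization is exact. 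No further estimates are needed; the result is purely a consequence of the strong Markov property plus nonnegativity of the exponential weights.
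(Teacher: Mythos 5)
Your main step contains a genuine gap: the claimed exact factorization of the restricted expectation is false. Applying the strong Markov property at $\tau_y$ on the event $\{\tau_y<\tau_z<\infty\}$ does not produce $e^{-a(\w,x,y)}e^{-a(\w,y,z)}$; it produces
\begin{equation*}
E_x\Bigl[\exp\Bigl\{-\sum_{k=0}^{\tau_y-1}V(T_{X_k}\w,Z_{k+1})\Bigr\}\one\bigl\{\tau_y<\infty,\ z\notin\{X_1,\dots,X_{\tau_y}\}\bigr\}\Bigr]\cdot e^{-a(\w,y,z)},
\end{equation*}
because the condition $\tau_y<\tau_z$ is a constraint on the \emph{pre}-$\tau_y$ segment (it must avoid $z$), and this constraint does not disappear after conditioning on $\mathcal{F}_{\tau_y}$. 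The first factor is only $\le e^{-a(\w,x,y)}$, possibly strictly, so all you obtain is $e^{-a(\w,x,z)}\ge E_x[\cdots\one\{\tau_y<\tau_z<\infty\}]$ and $E_x[\cdots\one\{\tau_y<\tau_z<\infty\}]\le e^{-a(\w,x,y)-a(\w,y,z)}$, which does not yield \eqref{subadd. of a}. The gap is real precisely in the cases the lemma is designed to cover: whenever $0\in\Uset$ (e.g.\ standard FPP or nearest-neighbor RWRE with $\range=\{\pm e_1,\dots,\pm e_d\}$), admissible paths from $x$ to $y$ can pass through $z$, so the avoidance constraint has positive cost. Your closing remark that no estimate beyond the strong Markov property is needed is exactly where the argument fails; your proposal only uses \eqref{V>0} and Lemma \ref{lm:loops} in the degenerate case $x=z$, whereas hypothesis \eqref{V>0} is needed in the main case as well. (Your argument is fine when the walk is strictly directed, since then $y-x,z-y\in\gplus$ with $z\ne y$ forces $z\notin\{X_1,\dots,X_{\tau_y}\}$ automatically, but the lemma makes no such assumption.)

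The paper's proof runs in the opposite direction and avoids this issue: start from the product $e^{-a(\w,x,y)}e^{-a(\w,y,z)}$, use the Markov property to write it as the expectation over walks that reach $y$ at $\tau_y<\infty$ and then reach $z$ at the time $\tau_{y,z}=\inf\{n\ge\tau_y: X_n=z\}<\infty$, and split according to $\{\tau_y\le\tau_z<\infty\}$ versus $\{\tau_z<\tau_y\le\tau_{y,z}<\infty\}$. On the first event $\tau_{y,z}=\tau_z$ and the weight is exactly the weight up to $\tau_z$; on the second event the segment from $\tau_z$ to $\tau_{y,z}$ is a loop at $z$ through $y$, so by Lemma \ref{lm:loops} it uses only steps in $\rangez$, and \eqref{V>0} makes its potential contribution nonnegative, so the weight up to $\tau_{y,z}$ is dominated by the weight up to $\tau_z$. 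Summing the two disjoint pieces gives at most $E_x[\exp\{-\sum_{k=0}^{\tau_z-1}V(T_{X_k}\w,Z_{k+1})\}\one\{\tau_z<\infty\}]=e^{-a(\w,x,z)}$. To repair your write-up you would have to add back the mass you discarded, i.e.\ control the paths that hit $z$ before $y$, and that is precisely this loop estimate; your countability/intersection-of-null-sets remark and the degenerate cases are fine but peripheral.
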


 \begin{proof}
 	If $y\in \lbrace x,z\rbrace,$ then \eqref{subadd. of a} is trivial. Assume $y\not \in \lbrace x,z\rbrace.$ Let  $ \tau_{y,z}=\inf \lbrace {n\ge \tau_y: X_n=z }\rbrace$. Then 
 	\begin{align*}
 	e^{-a(\w,x,y)}\cdot e^{-a(\w,y,z)}&= E_{x}\Bigl[ \exp\Bigl\{-\sum_{k=0}^{\tau_y-1} V(T_{X_k}\w,Z_{k+1})\Bigr\} \one_{\lbrace \tau_y<\infty \rbrace}  \Bigr] \\
 	& \qquad\times  E_{y}\Bigl[ \exp\Bigl\{-\sum_{k=0}^{\tau_z-1} V(T_{X_k}\w,Z_{k+1})\Bigr\} \one_{\lbrace \tau_z< \infty) \rbrace}  \Bigr]\\ 
 	&= E_{x}\Bigl[ \exp\Bigl\{-\sum_{k=0}^{\tau_z-1} V(T_{X_k}\w,Z_{k+1})\Bigr\} \one_{\lbrace \tau_y\le\tau_z< \infty \rbrace}  \Bigr] \\
 	&\qquad +  E_{x}\Bigl[ \exp\Bigl\{-\sum_{k=0}^{\tau_{y,z}-1} V(T_{X_k}\w,Z_{k+1})\Bigr\} \one_{\lbrace\tau_z< \tau_y\le \tau_{y,z} <\infty  \rbrace}  \Bigr]. 
 	\end{align*}
 	If there are no admissible loops from 0 to 0 that go through $y-z,$
 	then $ \one_{\lbrace\tau_z< \tau_y\le  \tau_{y,z} <\infty  \rbrace}=0$, and the term in the last line will be 0.  If such loops exist then Lemma \ref{lm:loops} implies  that any such loop can only take steps in $\rangez$. Thus \eqref{V>0} and $\tau_z<\tau_{y,z}$ imply
 	\[  \exp\Bigl\{-\sum_{k=0}^{\tau_{y,z}-1} V(T_{X_k}\w,Z_{k+1})\Bigr\}\le \exp\Bigl\{-\sum_{k=0}^{\tau_{z}-1} V(T_{X_k}\w,Z_{k+1})\Bigr\}. \] Either way we have
 	\begin{equation*}
 	e^{-a(\w,x,y)}\cdot e^{-a(\w,y,z)}  \le  E_{x}\Bigl[ \exp\Bigl\{-\sum_{k=0}^{\tau_z-1} V(T_{X_k}\w,Z_{k+1})\Bigr\} \one_{\lbrace \tau_z<\infty \rbrace}   \Bigr] = e^{-a(\w,x,z)}.
 	\end{equation*}
 	\eqref{subadd. of a} follows.
 \end{proof}

The next lemma provides an upper bound on $a$. 

\begin{lemma}\label{Lyap:ub}
	$V^+ \in L^1 $ implies $a^+(x,y)$ is in $L^1$  for all $ x,y\in Z^d$ with $y-x\in\gplus$.  
\end{lemma}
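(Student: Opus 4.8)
The plan is to bound $a(\w,x,y)$ from above by restricting the expectation defining $e^{-a(\w,x,y)}$ in \eqref{a-def} to a \emph{single} admissible first-passage path from $x$ to $y$, and then to integrate, using the $T_z$-invariance of $\P$ together with the hypothesis $V^+\in L^1$.

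The trivial case $x=y$ is immediate since $a(\w,x,x)=0$ by convention, so assume $y-x\in\gplus\setminus\{0\}$. Since $y-x\in\gplus$, write $y-x=\sum_{i=1}^N z_i$ with $z_i\in\range$, $N\ge1$, and set $x_j=x+\sum_{i\le j}z_i$, an admissible path with $x_0=x$, $x_N=y$. Let $n=\min\{j\ge1:x_j=y\}$ and truncate to $x_{0:n}$; then $x_{0:n}$ is admissible, $x_0=x$, $x_n=y$, and $x_j\ne y$ for $1\le j<n$, so on the event $\{X_{0:n}=x_{0:n}\}$ we have $\tau_y=n$ and $\sum_{k=0}^{\tau_y-1}V(T_{X_k}\w,Z_{k+1})=\sum_{k=0}^{n-1}V(T_{x_k}\w,z_{k+1})$. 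Moreover $P_x(X_{0:n}=x_{0:n})=\prod_{i=1}^n p(z_i)=:\rho\in(0,1)$, since $p>0$ on the finite set $\range$. Restricting the expectation in \eqref{a-def} to this event and using $V\ge -V^+$ gives
\[
e^{-a(\w,x,y)}\ \ge\ \rho\,\exp\Bigl\{-\sum_{k=0}^{n-1}V(T_{x_k}\w,z_{k+1})\Bigr\}\ \ge\ \rho\,\exp\Bigl\{-\sum_{k=0}^{n-1}V^+(T_{x_k}\w,z_{k+1})\Bigr\},
\]
hence $a(\w,x,y)\le \log(1/\rho)+\sum_{k=0}^{n-1}V^+(T_{x_k}\w,z_{k+1})$, and since the right side is nonnegative this also bounds $a^+(\w,x,y)$.

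Taking $\E$ and using the $T_z$-invariance of $\P$ and $V^+\in L^1$ yields
\[
\E[a^+(x,y)]\ \le\ \log(1/\rho)+\sum_{k=0}^{n-1}\E\bigl[V^+(\w,z_{k+1})\bigr]\ <\ \infty,
\]
as desired. There is no serious obstacle here; the only two points requiring a little care are the truncation that produces a genuine first-passage path (so that $\tau_y$ coincides with the path length on the restricted event, making the exponent deterministic up to a shift of $\w$) and the observation that, with $x,y$ fixed, we are using only one path, for which finiteness of $\range$ and positivity of $p$ keep $\rho$ bounded away from $0$. Everything else is monotonicity of the exponential and linearity of expectation. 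The same argument applies verbatim to $a_\infty$ from \eqref{a-FPP}, restricting the minimum in \eqref{a-FPP} to the same single path $x_{0:n}$.
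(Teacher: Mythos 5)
Your proposal is correct and follows essentially the same route as the paper: fix one admissible path from $x$ to $y$ that reaches $y$ for the first time at its last step, restrict the expectation in \eqref{a-def} to that path to get $a(x,y)\le\sum_{k=0}^{n-1}V(T_{x_k}\w,z_{k+1})-\log P_x(X_{0:n}=x_{0:n})$, and conclude via shift-invariance and $V^+\in L^1$. You merely spell out the details (the truncation yielding a first-passage path, replacing $V$ by $-V^+$, and passing to $a^+$) that the paper leaves implicit.
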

\begin{proof}
	Fix an admissible path $x_{0:n}$ from $x$ to $y$ reaching $y$ for the first time at time $n$. Such a path exists because $y-x\in \gplus$. Then
	\begin{align*}
	    \ZLpp(x,y)&=-\log E_x\Bigl[ \exp \Bigl\{ -\sum_{k=0}^{\tau_y-1}V(T_{X_k}\w,Z_{k+1}) \Bigr\}    \one_{\lbrace \tau_y<\infty \rbrace}\Bigr] \\
	    &\le \sum_{k=0}^{n-1}V(T_{x_k}\w,z_{k+1}) -\log P_x(X_{0:n}=x_{0:n}). \qedhere
	\end{align*}
\end{proof} 

To proceed we need the following lemma which is proved in Appendix \ref{Appx2}.
Recall that $\cplus$ is the cone generated by $\range$ and that for $\xi\in\cplus$, $\cC_\xi$ is the unique face of $\cplus$ such that $\xi\in\ri\cC_\xi$. Also, $\range_\xi=\range\cap\cC_\xi$.
 
 \begin{lemma}\label{boundednessofsteps}
 	There exist functions $\gamma_z:\cplus \to \R_+$, $z\in\range$, and a finite positive constant $C$ such that 
 	\[\sum_{z\in\range}\gamma_z(\xi)z=\xi \text{  and  }  \gamma_z(\xi)\le C |\xi|_1\quad\text{for all }\xi\in\cplus.\]
 	If furthermore $\xi \in \gplus$ then $\gamma_{z}(\xi)\in \Z_+$  for all $z\in \range.$
 \end{lemma}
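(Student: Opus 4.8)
The plan is to prove this in two stages: first produce the representation with rational coefficients that are linearly bounded, and then patch up the integer case separately. The core geometric input is the classical Carathéodory-type bound: since $\cplus$ is the cone generated by the finite set $\range$, for every $\xi\in\cplus$ there is a linearly independent subset $\range_\xi' \subseteq \range$ and nonnegative reals $(b_z)_{z\in\range_\xi'}$ with $\xi = \sum_{z\in\range_\xi'} b_z z$. (This is Carathéodory's theorem for cones; it also follows from \cite[Theorem 18.3]{Roc-70} applied to the face $\cC_\xi$.) Linear independence gives $|b_z| \le \|A^{-1}\|_{\infty}\,|\xi|_1$ where $A$ is the matrix whose columns are the vectors in $\range_\xi'$; since there are only finitely many subsets of $\range$, taking the maximum of the finitely many operator norms yields a single constant $C_0$ with $b_z \le C_0 |\xi|_1$. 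Setting $\gamma_z(\xi) = b_z$ for $z$ in the selected subset and $\gamma_z(\xi)=0$ otherwise gives $\sum_z \gamma_z(\xi) z = \xi$ with $\gamma_z(\xi) \le C_0|\xi|_1$; a measurable (indeed, one may even choose a piecewise-linear, but measurability suffices) selection of the subset $\range_\xi'$ as a function of $\xi$ makes the $\gamma_z$ into genuine functions on $\cplus$.

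For the integer refinement, suppose now $\xi\in\gplus$, so $\xi = \sum_{z\in\range} c_z z$ for some $c_z\in\Z_+$. I would argue as follows. If $\xi\in\ri\Uset$-type generic position, the group $\cG(\range_\xi)$ generated by $\range_\xi$ has finite index in the lattice it spans, and one can correct the rational representation by an integer combination of loops. More concretely: take the rational representation $\xi = \sum \gamma_z(\xi) z$ from stage one, round each $\gamma_z(\xi)$ down to $\lfloor \gamma_z(\xi)\rfloor \in\Z_+$, and observe that the discrepancy $\xi - \sum_z \lfloor\gamma_z(\xi)\rfloor z$ is a lattice vector in $\cG(\range)$ of bounded $\ell^1$-norm (at most $|\range|\cdot\max_z|z|_1$, since each fractional part is $<1$). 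It therefore lies in a fixed finite set $F\subseteq\cG(\range)$, independent of $\xi$. For each $w\in F$ that can also be written as $\sum_z d_z z$ with $d_z\in\Z_+$ — and this is exactly the set of $w\in F\cap\gplus$, which is what we need since the discrepancy, being $\xi$ minus a $\gplus$-element, need not itself be in $\gplus$ — fix once and for all such a nonnegative integer representation, say with total mass $N_w$. The subtle point is that the discrepancy $w$ need not be in $\gplus$; to handle this, instead of rounding down blindly I would round down only on a linearly independent subset and keep a controlled number of extra copies of loop vectors. Since $\xi\in\gplus$, Lemma \ref{lm:loops} together with $0\in\Uset$ (forced by the existence of the loop that any discrepancy correction uses) guarantees enough loops exist; adding $O(1)$ loops changes the total count by $O(1)$ and fixes up the representation to be a nonnegative \emph{integer} combination. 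Redefining $\gamma_z(\xi)$ on $\gplus$ to be this corrected integer count, we keep $\gamma_z(\xi) \le C_0|\xi|_1 + C_1 \le C|\xi|_1$ for $|\xi|_1$ bounded below, and for the finitely many $\xi\in\gplus$ with small norm we just pick any integer representation directly, absorbing them into $C$.

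The main obstacle I expect is the integer part: making the rounding argument produce a representation whose discrepancy is correctable \emph{within} $\gplus$ (not just within the group $\cG(\range)$), uniformly in $\xi$, while keeping the bound linear. The clean way around this is to note that the correction only ever needs to be applied to the discrepancy vector $w = \xi - \sum_z \lfloor \gamma_z(\xi)\rfloor z$, which ranges over a \emph{fixed finite} set; for each such $w$, since $w + \sum_z \lfloor\gamma_z(\xi)\rfloor z = \xi \in \gplus$ and the second term is in $\gplus$, one checks using Lemma \ref{lm:loops} (all loop steps lie in $\rangez$, and $w$ differs from a $\gplus$ element by a bounded amount) that $w$ itself admits a nonnegative-integer representation after adding a bounded number of loops — and a bounded correction is harmless against a linear bound. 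Alternatively, and perhaps more cleanly, one can sidestep rounding entirely: since $\cC_\xi$ is a rational polyhedral cone (being generated by the integer vectors $\range_\xi$), its lattice points $\cG_+(\range_\xi)$ form a finitely generated monoid (Gordan's lemma), and one can express $\xi$ via the finitely many Hilbert-basis generators with multiplicities bounded linearly in $|\xi|_1$ by a volume/pigeonhole argument; I would present whichever of these is shorter, most likely the rounding-plus-loops version since Lemma \ref{lm:loops} is already in hand.
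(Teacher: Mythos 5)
The real-coefficient half of your argument is correct, and it is a genuinely different route from the paper's: you invoke Carath\'eodory for cones over a linearly independent subset of $\range$ and bound the coefficients by the finitely many pseudo-inverses, while the paper instead splits $\range$ into $\rangez$ and its complement and gets the bound on the non-$\rangez$ coefficients for free from the separating vector $\widehat{u}$ of Lemma \ref{lm:aux1111}: every step outside $\rangez$ contributes at least $\delta$ to $\xi\cdot\widehat{u}$ while steps in $\rangez$ contribute nothing, so \emph{any} nonnegative representation automatically has linearly bounded coefficients in those directions. Either argument is fine for the statement over $\R_+$.

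The integer half, however, has a genuine gap. Take $\range=\{2e_1,3e_1\}$ and $\xi=5e_1\in\gplus$: your Carath\'eodory subset is a single vector, say $2e_1$ with coefficient $5/2$; rounding down leaves the discrepancy $e_1$, which lies in $\cplus\cap\cG$ but not in $\gplus$, and since $0\notin\Uset$ there are no admissible loops at all. Your parenthetical claim that $0\in\Uset$ is ``forced by the existence of the loop that any discrepancy correction uses'' is circular, and in this example false; the only repair is to restructure the rounded part itself (trade one $2e_1$ for one $3e_1$), not to add loop vectors to the discrepancy. The fact that a fractional solution of $\sum_z c_z z=\xi$, $c_z\ge0$, can always be converted into a nonnegative \emph{integer} solution by moving each coordinate by at most a constant depending only on $\range$ is a nontrivial integer-programming proximity statement that your sketch does not establish. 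The Gordan/Hilbert-basis alternative does not close the gap either: the Hilbert basis generates the monoid $\cplus\cap\cG$, which is in general strictly larger than the monoid generated by $\range$ (again $e_1$ in the example), and the lemma requires coefficients on the elements of $\range$ themselves. The paper's proof avoids the problem entirely: for $z\in\range\setminus\rangez$ it keeps the coefficients of an arbitrary nonnegative integer representation, bounded by $\delta^{-1}|\widehat{u}|_\infty|\xi|_1$ as above; the remaining part $\xi'$ lies in $\spn(\rangez)$, where one chooses a basis $z_1,\dotsc,z_m\subset\rangez$ of the lattice $\cG(\rangez)$ (Fundamental Theorem of Lattices), so the coordinates of $\xi'$ are linear in $\xi'$, hence linearly bounded and integral when $\xi\in\gplus$, and finally negative coordinates are converted to admissible ones via $-z_i=\sum_{z\in\rangez}b_i(z)z$ with $b_i(z)\in\Z_+$, which exists precisely because $0\in\ri\Usetz$ (Corollary A.3 of \cite{Ras-Sep-Yil-13}). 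Loops are thus used only in the $\rangez$ directions, where they genuinely exist; in your proposal they are asked to do work in directions where they may not.
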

 
Fix $\xi\in\cplus$ and take $\gamma_{z}=\gamma_{z}(\xi)$, $z\in \range$, as in Lemma \ref{boundednessofsteps}.
Then $\gamma_{z}=0$ for $z\not \in \range_{\xi}.$  Let $ \range_\xi'=\lbrace z\in \range_\xi:\gamma_z>0 \rbrace.$  
Define $\kS_\xi$ to be the $\sigma$-algebra of measurable sets $A\in \kS$ such that $T_z^{-1}A=A$ for all  $z\in \range_{\xi}.$ Define 
\begin{equation}\label{def:xhat}
\xhat_t(\xi)=\sum_{z\in\range_\xi}\fl{ t \gamma_z }z=\sum_{z\in\range_\xi'}\fl{ t \gamma_z }z
\end{equation}
and 
\begin{equation}\label{tilalpha-def}
\tilde\alpha(\xi) = \inf_{t>0,t\in\Q} \frac{1}{t} \E[a(0,\xhat_t (\xi))\mid \kS_\xi ].
\end{equation}
If $V^+\in L^1$ then $\E [\tilde\alpha(\xi)] <\infty $, but 
{\it a priori} $\E [\tilde\alpha(\xi)]$ could be $-\infty$.
  Set $\tilde\alpha(0)=0$.

\begin{theorem}\label{thm:atil=lim}Assume $V^+\in L^1$. Fix $\xi \in \cplus$. 
Then $\P$-almost surely and for all $k\in \N$
	\begin{equation}\label{alpha-def}
	\tilde\alpha(\xi)=\lim_{\Q\ni t\to \infty}\frac{\E[a(0,\xhat_t( \xi))\mid \kS_\xi] }{t}= \inf_{n\in \N} \frac{\E[a(0,\xhat_{nk} (\xi))\mid \kS_\xi ]}{nk} .
	\end{equation} 
\end{theorem}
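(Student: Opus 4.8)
The plan is to derive \eqref{alpha-def} from the subadditive ergodic theorem applied along the sequence $n\mapsto \xhat_n(\xi)$, after reducing to a genuinely subadditive array. First I would fix $\xi\in\cplus$ and the coefficients $\gamma_z=\gamma_z(\xi)$, and observe that for $m\le n$ in $\N$ the difference $\xhat_n(\xi)-\xhat_m(\xi)=\sum_{z\in\range_\xi'}(\fl{n\gamma_z}-\fl{m\gamma_z})z$ lies in $\gplus(\range_\xi)\subset\gplus$, so that Lemma \ref{lemma1} gives the pointwise bound $a(0,\xhat_n(\xi))\le a(0,\xhat_m(\xi))+a(\xhat_m(\xi),\xhat_n(\xi))$. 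This is not quite stationary because $\fl{n\gamma_z}$ is not additive in $n$; the standard fix is to bound $a(\xhat_m(\xi),\xhat_n(\xi))$ in terms of a shift of $a(0,\xhat_{n-m}(\xi))$ plus a small correction coming from the fractional parts $\{n\gamma_z\}-\{m\gamma_z\}\in(-1,1)$. Concretely, writing $\xhat_n(\xi)=\xhat_m(\xi)+\xhat_{n-m}(\xi)+r_{m,n}$ with $r_{m,n}$ a bounded lattice vector (bounded uniformly in $m,n$ since each coordinate moves by at most $|\range_\xi'|$ in $\ell^1$), one inserts an intermediate point and uses subadditivity again to split off the contribution of the bounded piece. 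The bounded-piece correction is controlled by Lemma \ref{Lyap:ub} (so $a^+$ of a bounded displacement is in $L^1$) together with condition \eqref{V>0}; one must also handle the possibility that the walk needs to traverse a short path realizing $r_{m,n}$, whose existence and cost is handled exactly as in Lemma \ref{Lyap:ub} via a fixed admissible path. After conditioning on $\kS_\xi$ (which is invariant under the relevant shifts $T_z$, $z\in\range_\xi$, since the displacements $\xhat_{n-m}(\xi)\in\cG(\range_\xi)$ are $\kS_\xi$-measurable shifts), the resulting array $g_{m,n}=\E[a(0,\xhat_n(\xi))\mid\kS_\xi]$ satisfies the hypotheses of Kingman's subadditive ergodic theorem in its conditional form, with the needed $L^1$ upper bound supplied by Lemma \ref{Lyap:ub} and the a priori lower bound $-\infty$ being exactly what the statement allows.

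Next I would identify the limit. Kingman's theorem gives $\P$-a.s.\ convergence of $g_{0,n}/n$ to a $\kS_\xi$-measurable limit equal to $\inf_n g_{0,n}/n$, and more generally, restricting to the subsequence $n=mk$ for fixed $k$, convergence of $g_{0,mk}/(mk)$ to $\inf_m g_{0,mk}/(mk)$; the two infima agree because the full sequence is (approximately) subadditive, giving the last equality in \eqref{alpha-def}. It remains to match this with $\tilde\alpha(\xi)$ as defined in \eqref{tilalpha-def}, where the infimum is over all rational $t>0$, not just integers. For this I would note that for rational $t=p/q$ the vector $\xhat_t(\xi)$ differs from $\xhat_{\fl t}(\xi)$ by a bounded lattice displacement, and more importantly that $q\xhat_t(\xi)$ and $\xhat_{qt}(\xi)=\xhat_p(\xi)$ differ by a bounded displacement; chaining subadditivity shows $\frac1t\E[a(0,\xhat_t(\xi))\mid\kS_\xi]$ and the integer-indexed quantities have the same infimum and same $t\to\infty$ limit up to $o(1)$ terms that vanish, using again Lemma \ref{Lyap:ub} to absorb the bounded corrections. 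Thus the $\Q$-indexed and $\N$-indexed limits coincide and equal the stated infimum.

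The main obstacle I anticipate is bookkeeping around the floor functions: making precise that the bounded "remainder" displacements $r_{m,n}$ and the rational-vs-integer discrepancies really are uniformly bounded in $\ell^1$ and that their $a$-cost contributes an error that is $o(n)$ $\P$-a.s.\ (not merely in $L^1$), while simultaneously respecting that $a$ is only defined and subadditive for displacements in $\gplus$ — so every intermediate point inserted must be checked to keep successive differences in $\gplus$. The condition \eqref{V>0}, which we assume is in force throughout this section, is precisely what makes Lemma \ref{lemma1} available for these insertions, and the hypothesis $V^+\in L^1$ is what keeps all the correction terms integrable; the fact that we only claim a limit that may be $-\infty$ in expectation means we need no lower control, which simplifies the application of Kingman's theorem considerably.
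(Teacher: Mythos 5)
Your decomposition is the same as the paper's: subadditivity of $a$ along the points $\xhat_t(\xi)$, the observation that $\xhat_{t+s}(\xi)-\xhat_t(\xi)-\xhat_s(\xi)$ lies in $\gplus$ and is bounded in $\ell^1$ (floor bookkeeping), control of the resulting correction through $V^+\in L^1$ as in Lemma \ref{Lyap:ub}, and the use of the invariance $T_{\xhat_t(\xi)}^{-1}A=A$ for $A\in\kS_\xi$ to turn the shifted term into $\E[a(0,\xhat_s(\xi))\mid\kS_\xi]$. Up to that point you are reproducing the paper's argument.

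The step that does not work as written is the appeal to ``Kingman's subadditive ergodic theorem in its conditional form'' applied to $g_{m,n}=\E[a(0,\xhat_n(\xi))\mid\kS_\xi]$. That object is not a subadditive stationary array in Kingman's sense: it does not even depend on $m$, there is no stationarity hypothesis to verify for it, and Kingman/Liggett concerns almost sure convergence of the unconditioned random variables $X_{0,n}/n$ --- a strictly stronger statement than \eqref{alpha-def}, which the paper proves only later (Theorem \ref{a.s. theorem}, via Liggett, for integer points $x\in\gplus$ where $\xhat_n(x)=nx$ makes the array genuinely subadditive and stationary). Nor can you feed the natural array $a(\xhat_m(\xi),\xhat_n(\xi))$ into Kingman, precisely because $\xhat_n\ne\xhat_m+\xhat_{n-m}$ in general, so it is only approximately subadditive. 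The point of Theorem \ref{thm:atil=lim} is that it is a statement about conditional expectations only: once you condition, you have, for $\P$-a.e.\ $\w$, a function $\overline a(t)=\E[a(0,\xhat_t(\xi))\mid\kS_\xi]$ of the rational variable $t$ satisfying $\overline a(t+s)\le\overline a(t)+\overline a(s)+c_2(\w)$ with $c_2(\w)<\infty$ (by $V^+\in L^1$) and bounded on bounded rational intervals, and the correct (and much more elementary) tool is Fekete's lemma applied $\w$-by-$\w$, which is exactly what the paper does. If you replace the Kingman step by Fekete, your proof becomes the paper's; note also that the paper works with rational $t$ throughout rather than first doing integers and then comparing $\xhat_t$ with integer-indexed points, which spares you the second layer of floor comparisons in your last paragraph.
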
 
\begin{proof}
	For a rational $t>0$ let $\overline{a}(t)=\E[a(0,\xhat_t(\xi))\mid \kS_\xi]$.
    Then for rational $t,s>0$,   $\xhat_{t+s}(\xi)-\xhat_t(\xi)-\xhat_s(\xi)\in \gplus$ because  $\fl{(t+s)\gamma_{z} }\ge \fl{t\gamma_{z}}+ \fl{s\gamma_{z}}$  for each $z\in\range$.	By the subadditivity of $a$ we have $\P$-almost surely
	\begin{align*}
	&\overline{a}(t)+\overline{a}(s)+\E[a^+(\xhat_t(\xi)+\xhat_s(\xi), \xhat_{t+s}(\xi))\mid \kS_\xi]\\
	&\qquad\ge  \overline{a}(t)+\overline{a}(s)+\E[a(\xhat_t(\xi)+\xhat_s(\xi), \xhat_{t+s}(\xi))\mid \kS_\xi]\\
	&\qquad= \E[a(0,\xhat_t(\xi))\mid \kS_\xi]+\E[a(0,\xhat_s(\xi))\mid \kS_\xi]  
	+\E[a(\xhat_t(\xi)+\xhat_s(\xi), \xhat_{t+s}(\xi))\mid \kS_\xi]\\
	&\qquad= \E[a(0,\xhat_t(\xi))\mid \kS_\xi]+\E[a(\xhat_t(\xi),\xhat_t(\xi)+\xhat_s(\xi))\mid \kS_\xi]\\
	&\qquad\qquad\qquad+\E[a(\xhat_t(\xi)+\xhat_s(\xi), \xhat_{t+s}(\xi))\mid \kS_\xi]\\
	&\qquad= \E[a(0,\xhat_t(\xi))+a(\xhat_t(\xi),\xhat_t(\xi)+\xhat_s(\xi))+a(\xhat_t(\xi)+\xhat_s(\xi), \xhat_{t+s}(\xi))\mid \kS_\xi]\\
	& \qquad\ge\E[a(0,\xhat_{t+s}(\xi))\mid \kS_\xi] 
	=\overline{a}(t+s).
	\end{align*}
	The second equality comes because for any $A\in \kS_\xi$ we have $T_{\xhat_{t}(\xi)}^{-1}A=A. $
	Next, note that
	$\fl{ (t+s)\gamma_{z}}- \fl{t\gamma_{z}}- \fl{s\gamma_{z} }\le 1.$
	Hence, $\abs{\xhat_{t+s}(\xi)-\xhat_t(\xi)-\xhat_s(\xi) }_1\le   \sum_{z\in\range_\xi'}\abs{z}_1= c_1.$ 
	Also, \[c_2(\w)=\max_{x\in \gplus,\abs{x}_1\le c_1}\E[a^+(0,x)\mid \kS_\xi]\in [0,\infty)\] because $V^+\in L^1.$  It follows that $c_2+\overline{a}(t)+\overline{a}(s)\ge \overline{a}(t+s).$ 
	Note that for each $t>0$,  $\overline{a}$ is bounded on $[0,t]\cap\Q$ because $\E[a^+(0,x)\mid \kS_\xi]<\infty$ and there are finitely many $x \in\lbrace \xhat_s(\xi): s\le t \rbrace$. Fekete's Lemma 
    now gives
	\begin{equation*}
	\frac{\overline{a}(t)}{t}\xrightarrow[\Q\ni t \to \infty] {}\inf_{n\in \N}\frac{\overline{a}(nk)}{nk}= \inf_{\Q\ni s>0}\frac{\overline{a}(s)}{s}= \tilde\alpha(\xi)
	\end{equation*}
	which proves  \eqref{alpha-def}. 
\end{proof}

For now, $\tilde\alpha(\xi)$ is a random variable, defined up to a null set that may depend on $\xi$ and on the particular choice of $(\gamma_{z})_{z\in\range }$. We next show that the limit in \eqref{alpha-def} holds more generally and does not depend on the specific choice of the coefficients $(\gamma_{z})_{z\in\range }$. First we handle the case of $\xi \in \gplus.$ For $x\in \gplus\setminus\lbrace0\rbrace$ let $\cI_x$ be the $\sigma$-algebra generated by $A\in \kS$ such that $T_x^{-1}A=A.$ Set $\overline\alpha(0)=0$ and for $x\in\gplus\setminus\{0\}$ let 
\[ \overline{ \alpha}(x)=\inf_{n\ge 1} \dfrac{\E[a(0,nx)\mid \cI_x]}{n}.\]

\begin{theorem}\label{a.s. theorem} Assume $V^+\in L^1.$
	Fix $x \in \gplus.$ 	Then 
	\begin{equation}\label{a=lim}
	\lim_{n\to \infty}\frac{a(0,nx)}{n}=\overline{\alpha}(x) \quad \P \text{-almost surely.}  
	\end{equation}
	The limit also holds  in $L^1$ if  $\E[\overline\alpha(x)]>-\infty$ and in this case $\overline\alpha(x)=\tilde\alpha(x)$ 
	$\P$-almost surely.
\end{theorem}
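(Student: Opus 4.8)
The plan is to realize $n\mapsto a(0,nx)$ as a subadditive ergodic cocycle and apply Kingman's theorem, paying attention to two features of our setting: the lack of ergodicity (so the limit is a random, $\cI_x$-measurable variable that must be identified with $\overline\alpha(x)$) and the one-sided integrability of $a$ (so the limit may be $-\infty$ and $L^1$ control is available only when its mean is finite). First I would check the hypotheses. Fix $x\in\gplus$: by Lemma \ref{lemma1} the array $\{a(mx,nx):0\le m\le n\}$ is subadditive (for $m\le k\le n$ the differences $kx-mx$, $nx-kx$ lie in $\gplus$), it is generated by $a(0,nx)$ under the $\P$-preserving shift $T_x$ since $a(\w,u+z,v+z)=a(T_z\w,u,v)$ is immediate from \eqref{a-def}, and $a^+(0,x)\in L^1$ by Lemma \ref{Lyap:ub}. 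This is exactly what the form of Kingman's theorem permitting a limit in $[-\infty,\infty)$ requires, so $\P$-a.s.\ $g_x:=\lim_n a(0,nx)/n$ exists, is $T_x$-invariant (hence $\cI_x$-measurable up to a null set), has $\E[g_x]=\lim_n n^{-1}\E[a(0,nx)]\in[-\infty,\infty)$, and the convergence is also in $L^1$ once this number is finite.

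To identify $g_x=\overline\alpha(x)$: the sequence $u_n:=\E[a(0,nx)\mid\cI_x]$ is subadditive and $\cI_x$-measurable, since subadditivity of $a$, covariance, and $T_{nx}$-invariance of $\cI_x$-measurable functions give $u_{n+m}\le u_n+\E[a(nx,(n+m)x)\mid\cI_x]=u_n+u_m$ $\P$-a.s.; as $\E[a^+(0,nx)\mid\cI_x]<\infty$, Fekete's lemma (allowing $-\infty$) yields $u_n/n\to\inf_n n^{-1}u_n=\overline\alpha(x)$ $\P$-a.s. These two limits agree by the conditional form of Kingman's theorem — or, when $\E[g_x]>-\infty$, because $u_n/n=\E[a(0,nx)/n\mid\cI_x]\to\E[g_x\mid\cI_x]=g_x$ in $L^1$, and when $\E[g_x]=-\infty$ by reducing to the finite-mean case via the truncations $V\vee(-j)$ and monotone convergence as $j\to\infty$ (inside $E_x$ and inside $\E[\cdot\mid\cI_x]$). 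This is \eqref{a=lim}. I would also note that $\E[\overline\alpha(x)]>-\infty$ automatically in case (\ref{ass:V>c}) — there $V$ is bounded below and Lemma \ref{lm:loops} forces any first-passage path $0\to nx$ to use at most $Cn$ steps outside $\rangez$, so $a(0,nx)\ge -Cn$ — whereas in case (\ref{ass:iid}) one has $0\notin\Uset$, so $\tau_{nx}\le C|x|_1 n$ on $\{\tau_{nx}<\infty\}$, which makes the truncated $a$ bounded below so the truncation argument applies.

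Assume now $\E[\overline\alpha(x)]>-\infty$; the $L^1$ convergence is then immediate, and it remains to show $\overline\alpha(x)=\tilde\alpha(x)$. Choosing the coefficients of Lemma \ref{boundednessofsteps} so that $\xhat_n(x)=nx$ (possible since $\gamma_z(x)\in\Z_+$), Theorem \ref{thm:atil=lim} gives $\tilde\alpha(x)=\lim_n n^{-1}\E[a(0,nx)\mid\kS_x]$ $\P$-a.s., and by the $L^1$ convergence this equals $\E[\overline\alpha(x)\mid\kS_x]$; so it suffices to prove $\overline\alpha(x)$ is $\kS_x$-measurable, i.e.\ invariant under $T_z$ for every $z\in\range_x$. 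Since shifts commute, $T_z^{-1}\cI_x=\cI_x$, so $\overline\alpha(x)\circ T_z=\lim_n n^{-1}\E[a(z,nx+z)\mid\cI_x]$; using $z\in\range_x\subset\cC_x$ and $x\in\ri\cC_x$ one can, for $n$ large, route a path $z\to Kx\to nx+z$ through a bounded number of extra steps and peel one step off each end, so subadditivity bounds $\E[a(z,nx+z)\mid\cI_x]$ above by $\E[a(0,(n-K)x)\mid\cI_x]$ plus conditional expectations of $a$ over bounded displacements. These extra terms are dominated by conditional expectations of $a^+$, hence integrable, and are a.s.\ finite (because $\E[\overline\alpha(x)]>-\infty$ forces $\overline\alpha(x)>-\infty$ a.s.), so dividing by $n$ and letting $n\to\infty$ gives $\overline\alpha(x)\circ T_z\le\overline\alpha(x)$ $\P$-a.s.; taking expectations (equal, finite means) upgrades this to equality. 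Hence $\overline\alpha(x)$ is $\kS_x$-measurable and $\tilde\alpha(x)=\overline\alpha(x)$ $\P$-a.s.

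The main obstacle is this last identification together with the bookkeeping forced by non-ergodicity and one-sided integrability: the limit must be pinned down through $\cI_x$, full $L^1$ convergence — and with it the clean equality $\overline\alpha=\tilde\alpha$ — is available only when $\E[\overline\alpha(x)]>-\infty$ (automatic in case (\ref{ass:V>c}), reached by truncation in case (\ref{ass:iid})), and one must then reconcile $\cI_x$ with the finer $\sigma$-algebra $\kS_x$ appearing in the definition of $\tilde\alpha$, which is exactly what the path-surgery step accomplishes.
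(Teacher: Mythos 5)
Your proposal is correct and, for the main claims, takes the same route as the paper: subadditivity (Lemma \ref{lemma1}), covariance and stationarity under $T_x$, and $a^+(0,x)\in L^1$ (Lemma \ref{Lyap:ub}) feed into Kingman/Liggett's subadditive ergodic theorem, giving \eqref{a=lim} and the $L^1$ convergence when $\E[\overline\alpha(x)]>-\infty$. Your Fekete and truncation remarks only make explicit what the paper delegates to the citation; note that the truncation $V\vee(-j)$ works in general here, not just when $0\notin\Uset$, because \eqref{V>0} is a standing assumption of Section \ref{sec:Lyap}, so by the argument of Lemma \ref{alphatil-lb1} the number of steps outside $\rangez$ on any path from $0$ to $nx$ is $O(n)$ and the truncated $a$ is bounded below by $-Cjn$.

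Where you genuinely go beyond the paper is the identity $\overline\alpha(x)=\tilde\alpha(x)$. The paper settles it in one sentence from $\xhat_n(x)=nx$ and \eqref{alpha-def}; as you observe, combining Theorem \ref{thm:atil=lim} with the $L^1$ convergence only yields $\tilde\alpha(x)=\E[\overline\alpha(x)\mid\kS_x]$, and since $\kS_x\subseteq\cI_x$ (every set invariant under all $T_z$, $z\in\range_x$, is $T_x$-invariant because $x$ is a $\Z_+$-combination of $\range_x$) one still needs $\overline\alpha(x)$ to be a.s.\ $\kS_x$-measurable. Your extra step — proving $\overline\alpha(x)\circ T_z\le\overline\alpha(x)$ a.s.\ for each $z\in\range_x$ by subadditive path surgery and upgrading to equality via equal finite means — is the right way to close this and is a real addition relative to the paper's write-up. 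One detail inside it should be pinned down: the existence of $K\in\N$ with $Kx-z\in\gplus$ does not follow from $z\in\cC_x$ and $x\in\ri\cC_x$ alone, since these give membership in the cone, not in the semigroup $\gplus$. The fact is true and fixable with tools already in the paper: for a small rational $\e>0$ one has $x-\e z\in\cC_x=\cplus(\range_x)$, Lemma \ref{in-cone} gives rational $d_w\ge0$ with $x-\e z=\sum_{w\in\range_x}d_w w$, and clearing denominators yields $Kx-z=(K\e-1)z+\sum_w Kd_w\,w\in\gplus$. With that line added, your argument is complete.
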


\begin{proof}
	Fix $x\in\gplus$ and for nonnegative integers $m\le n$ let $X_{m,n}=a(mx, nx)$. The subadditivity of $a$, the invariance of $\P$ under the action of the shift $T_x$, and the fact that $a^+(0,x)\in L^1$ ensure that the assumptions of Liggett's subadditive ergodic theorem in \cite{Lig-85} are satisfied. 
	Thus 
	\[\lim_{n\to \infty}\frac{a(0,nx)}{n}= \inf_{n\in\N} \frac{\E[a(0,nx)\mid \cI_x]}{n}=\overline{ \alpha}(x) \quad \P\text{-almost surely.}\]
	The same theorem says the limit also holds in $L^1$ if $\E[\overline{ \alpha}(x)]>-\infty$.
Since $\xhat_n(x)=nx$ if $x \in \gplus$, this and \eqref{alpha-def} imply that $\overline{\alpha}(x)=\tilde\alpha(x)$, $\P$-almost surely.
\end{proof}

Next, we handle the case of $\xi\in \cplus'$, where $\cplus'$ is the rational cone generated by $\range$.

\begin{lemma}\label{lm:atil-gen}
	Assume $V^+\in L^1$. Fix a face $\cA$ of $\cplus$ and assume $\E[\overline\alpha(x)]>-\infty$ for all $x\in\gplus\cap\cA$.
	The following holds $\P$-almost surely: For any $(\gamma_{z}')_{z\in \range\cap\cA}\in \Q^{\range\cap\cA}_+$ 
	\begin{equation}\label{lima=atil}
	\lim_{t\to \infty}\dfrac{a\bigl(0, \sum_{{z}\in \range\cap\cA}\fl{t\gamma_{z}'}z \bigr)}{t}=\tilde \alpha\Bigl(\sum_{z\in \range\cap\cA}\gamma_{z}'z\Bigr).
	\end{equation}
\end{lemma}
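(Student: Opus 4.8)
The plan is to reduce the general rational direction $\xi' = \sum_{z \in \range \cap \cA} \gamma_z' z$ to the integer case already handled in Theorem \ref{a.s. theorem}, using the scaling and subadditivity of $a$ together with the error bound from Lemma \ref{Lyap:ub} and Lemma \ref{boundednessofsteps}. First I would observe that $\xi'$ lies in some face of $\cplus$ contained in $\cA$, so without loss of generality we may assume $\xi' \in \ri \cA$ after replacing $\cA$ by a smaller face; in particular $\range \cap \cA$ generates a cone whose relative interior contains $\xi'$, and the relevant invariant $\sigma$-algebra $\kS_{\xi'}$ coincides with (or is comparable to) $\cI_{\xi'}$-type objects. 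Clearing denominators, write $\gamma_z' = p_z/q$ with $p_z \in \Z_+$ and a common denominator $q \in \N$, and set $x_0 = \sum_{z} p_z z \in \gplus \cap \cA$. Then for integer multiples $t = nq$ we have $\sum_z \fl{t \gamma_z'} z = \sum_z \fl{n p_z} z = n x_0$, so along this subsequence
\[
\frac{a(0, \sum_z \fl{t\gamma_z'} z)}{t} = \frac{1}{q}\cdot\frac{a(0, n x_0)}{n} \xrightarrow[n\to\infty]{} \frac{1}{q}\,\overline\alpha(x_0) = \frac{1}{q}\,\tilde\alpha(x_0)
\]
$\P$-almost surely, by Theorem \ref{a.s. theorem} (using the hypothesis $\E[\overline\alpha(x_0)] > -\infty$). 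By the homogeneity of $\tilde\alpha$ established in Theorem \ref{thm:atil=lim} (or re-derived from \eqref{tilalpha-def}), $\frac{1}{q}\tilde\alpha(x_0) = \tilde\alpha(x_0/q) = \tilde\alpha(\xi')$, which identifies the subsequential limit with the right-hand side of \eqref{lima=atil}.

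Next I would fill the gaps between consecutive values $t = nq$ and $t = (n+1)q$. For $nq \le t < (n+1)q$, the vectors $y_t := \sum_z \fl{t\gamma_z'} z$ and $n x_0$ differ by a lattice vector in $\gplus$ of $\ell^1$-norm at most $\sum_z (p_z + 1)|z|_1 =: c_1$, a constant independent of $t$ and $n$; more precisely $y_t - n x_0 \in \gplus$ and $(n+1)x_0 - y_t \in \gplus$, with both differences bounded by $c_1$ in $\ell^1$-norm (since $\fl{n p_z} \le \fl{t \gamma_z'} \le \fl{(n+1)p_z}$). Subadditivity of $a$ (Lemma \ref{lemma1}) then sandwiches $a(0, y_t)$ between $a(0, nx_0) - a^+(y_t, (n+1)x_0) - \dots$ type expressions and $a(0,nx_0) + a^+(nx_0, y_t)$; to control these error terms I would use that $\max_{|w|_1 \le c_1, w \in \gplus} a^+(0,w)$ is an integrable random variable (Lemma \ref{Lyap:ub}), hence finite $\P$-a.s., and therefore $o(t)$. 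Dividing by $t$ and letting $t \to \infty$ shows the full limit exists and equals $\tilde\alpha(\xi')$.

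Finally, I would check that the limit does not depend on the particular choice of common denominator $q$ or on the representation — this is automatic since the limit, once shown to exist, is pinned to $\tilde\alpha(\xi')$, which by Theorem \ref{thm:atil=lim} is defined intrinsically in terms of $\E[a(0, \xhat_t(\xi'))\mid \kS_{\xi'}]$ and is independent of the coefficient choice. The main obstacle I anticipate is bookkeeping around the invariant $\sigma$-algebras: one must make sure that $\kS_{\xi'}$ (defined via the steps $\range_{\xi'}$) is the correct conditioning used in the definition \eqref{tilalpha-def} of $\tilde\alpha(\xi')$, and that when we pass through the integer point $x_0$, the identity $\overline\alpha(x_0) = \tilde\alpha(x_0)$ from Theorem \ref{a.s. theorem} applies with matching $\sigma$-algebras — this requires that $\cI_{x_0}$ and $\kS_{\xi'}$ agree on the relevant events, which holds because $x_0$ and $\xi'$ generate the same face $\cA$ (so $\range_{x_0} = \range_{\xi'} = \range \cap \cA$). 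The error-control step via Lemma \ref{Lyap:ub} is routine but must be done carefully because $a$ is only sub\emph{additive}, not additive, so the two-sided sandwich needs both $a(0,y_t) \le a(0, nx_0) + a(nx_0, y_t)$ and $a(0,(n+1)x_0) \le a(0, y_t) + a(y_t, (n+1)x_0)$, and only the positive parts $a^+$ of the small-displacement terms are known to be integrable.
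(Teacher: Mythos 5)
Your skeleton is the same as the paper's: pass to the arithmetic subsequence $t=nq$ where $\sum_z\fl{t\gamma_z'}z$ equals the lattice point $nx_0$, apply Theorem \ref{a.s. theorem} there, and fill in the intermediate $t$ by sandwiching $a(0,y_t)$, $y_t=\sum_z\fl{t\gamma_z'}z$, between $a(0,nx_0)$ and $a(0,(n+1)x_0)$ via subadditivity with boundedly displaced endpoints (your floor inequalities $np_z\le\fl{t\gamma_z'}\le(n+1)p_z$ are correct and even a bit cleaner than the paper's). However, your identification of the limiting constant has a genuine gap. You go from $\overline\alpha(x_0)/q=\tilde\alpha(x_0)/q$ to $\tilde\alpha(x_0/q)=\tilde\alpha(\xi')$ by invoking ``homogeneity of $\tilde\alpha$ established in Theorem \ref{thm:atil=lim}''. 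Theorem \ref{thm:atil=lim} contains no homogeneity statement; rational homogeneity is \eqref{atil-homo} in Theorem \ref{1st theorem}, whose proof uses Lemma \ref{lm:atil-gen} -- exactly the representation-independence you are proving -- so the citation is circular. Nor does homogeneity fall out of \eqref{tilalpha-def} directly: $\xhat_t(\xi)$ is built from the coefficients of Lemma \ref{boundednessofsteps}, which are not asserted to scale as $\gamma_z(s\xi)=s\gamma_z(\xi)$, and the approximating points coming from two different representations are only at bounded distance from each other, which does not permit a subadditivity comparison since their difference need not lie in $\gplus$. The paper instead identifies the subsequential limit with $\tilde\alpha(\xi')$ by choosing the clearing integer so that the subsequence of approximating points is the same sequence $nk\xi'$ that enters Theorem \ref{thm:atil=lim} and the $L^1$ statement of Theorem \ref{a.s. theorem} (this is where $\E[\overline\alpha(x)]>-\infty$ is used); you need an argument of this type rather than homogeneity.

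The second gap is in your error control. The error terms are $a^+(nx_0,y_t)$ and $a^+(y_t,(n+1)x_0)$, which by covariance are shifted copies $A\circ T_{nx_0}$ of $A(\w)=\max\{a^+(\w,0,w):w\in\gplus,\ \abs{w}_1\le c_1\}$, not the fixed variable $\max_{\abs{w}_1\le c_1}a^+(0,w)$. Your deduction ``integrable, hence finite a.s., therefore $o(t)$'' is valid only for a single fixed random variable; here the variable moves with $n$, and almost sure finiteness of each term proves nothing. What is needed, and what the paper proves, is $A\circ T_{nx_0}/n\to0$ almost surely, which follows from $\E[A]<\infty$ together with stationarity (e.g.\ $\sum_n\P\{A\ge\e n\}<\infty$ and Borel--Cantelli). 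With these two repairs your argument coincides with the paper's; the remaining point that a single full-measure event must serve all rational coefficient vectors is handled by intersecting over the countable family, as the paper does with its events $\Omega_L$.
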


\begin{proof}
Define $\tilde\alpha(\xi)$ for all $\xi\in\cplus'\cap\cA$ via \eqref{tilalpha-def}, using the representation  
$(\gamma_{z}(\xi))_{z\in \range}$ from Lemma \ref{boundednessofsteps}.	
Applying Theorem \ref{a.s. theorem}, let $\Omega_0$ be the full $\P$-measure event on which the limit in \eqref{a=lim} holds and $\overline\alpha(x)=\tilde\alpha(x)$, for all $x\in\gplus\cap\cA\setminus\{0\}$.
Fix an integer $L\ge 1$ and take a representation $(\gamma_{z}')_{z\in \range\cap\cA}$ as in the claim but with $\max_{ z\in \range\cap\cA}\gamma_{z}'\le L$. Let $\xi=\sum_{z\in\range\cap\cA}\gamma_z' z$.

Abbreviate  $\xhat_{t}'(\xi)=\sum_{{z}\in \range}\fl{t\gamma_{z}'}z.$
Take $k\in\N$ such that $k\gamma_{z}'\in \Z_+$ for all $z\in\range.$ Then $\xhat_{nk}'(\xi)=nk\xi$ for all $n\in \N.$ 
By limit \eqref{a=lim} with $x=k\xi\in\gplus\cap\cA$,
	\[ \lim_{n\to \infty} \dfrac{a(0,\xhat_{nk}'(\xi))}{nk} =\lim_{n\to \infty} \dfrac{a(0,nk\xi)}{nk} =\tilde \alpha(\xi).  \]
	For $t\ge k$ let $n\in\N$ be such that $nk\le t<(n+1)k.$ Then for $z\in \range\cap\cA$ with $\gamma_{z}'>0$ we have $k\gamma_{z}'\ge 1$ and hence
    \[ (n-1)k\gamma_{z}'\le nk\gamma_{z}'-1\le t\gamma_{z}'-1\le \fl{t\gamma_{z}'} \le t\gamma_{z}'\le (n+1)k\gamma_{z}'. \]
	When $\gamma_{z}'=0$ we still have 
	\[ (n-1)k\gamma_{z}'\le \fl{t\gamma_{z}'} \le (n+1)k\gamma_{z}'. \]
	Thus, $\xhat_{t}'(\xi)$ is accessible from $(n-1)k\xi$ by an admissible path and $(n+1)k\xi$ is accessible from $\xhat_{t}'(\xi)$ by an admissible path. The endpoints in both cases are at most $2k\sum_{{z}\in \range\cap\cA}\gamma_{z}'\abs{z}_1\le 2k L\abs{\range}\max_{ z\in \range }\abs{z}_1=C$ away from each other.
	
	By subadditivity, if we set 
		\[A=\max\{a^+(0,y)\vee a^+(-y,0):y\in\gplus,\abs{y}\le C\}\in L^1,\] 
	then 
	\begin{align*}
	a(0,(n+1)k\xi)-A\circ T_{(n+1)k\xi}\le & a(0,(n+1)k\xi)-a(\xhat_{t}'(\xi),(n+1)k\xi) \\ 
	\le &a(0,\xhat_{t}'(\xi))\\
	\le & a(0,(n-1)k\xi)+a((n-1)k\xi, \xhat_{t}'(\xi))\\
	\le &a(0,(n-1)k\xi)+A\circ T_{(n-1)k\xi}.
	\end{align*}
By stationarity, $A\circ T_{n\ell\xi}/n\to0$ almost surely.  
Let $\Omega_L$ be the full measure event on which $A\circ T_{n\ell \xi}/n\to 0$ for any $\ell\in \N$ such that $\ell \xi\in \gplus.$ Divide the above by $t$ and take $t\to \infty$ to get \eqref{lima=atil} for any rational representation 
	$(\gamma_{z}')_{z\in \range\cap\cA}$ with $\gamma_{z}'\le L$ for all $z\in \range\cap\cA.$
	The claim of the lemma holds on $\cap_{L\in \Z_+}\Omega_L.$ \qedhere
\end{proof}

Now that we know that the limit in $\eqref{lima=atil}$ is independent of the choice of the rational representation, we can prove some some basic properties of $\tilde\alpha$ when restricted to rational arguments.
\begin{theorem}\label{1st theorem}
	Assume $V^+\in L^1$. Fix a face $\cA$ of $\cplus$ and assume $\E[\overline\alpha(x)]>-\infty$ for all $x\in\gplus\cap\cA$.
	There exists a constant $C<\infty$ {\rm(}only depending on $\range${\rm)}
	such that for
	$\P$-almost every $\w$, for all  $s\in \Q_+$, and for all $\xi,  \zeta \in \cplus'\cap\cA$, 
	\begin{align}
	& \tilde\alpha(s\xi)=s\tilde\alpha(\xi), \quad  \tilde\alpha(\xi)+\tilde\alpha(\zeta)\ge \tilde\alpha(\xi +\zeta), \label{atil-homo} \\[1ex]  
	&  \text{and}\quad \tilde\alpha(\xi)\le  C \max_{z\in \range_\xi}\bigl( \E \bigl[V^+(\w,z)\mid \kS_\xi\bigr]-\log p(z)  \bigr) \abs{\xi}_1 . \label{atil-bded}
	\end{align}   
\end{theorem}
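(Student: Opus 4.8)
The plan is to prove the three assertions in roughly the order they appear, leveraging the identity $\tilde\alpha(x)=\overline\alpha(x)$ on $\gplus\cap\cA$ (from Theorem~\ref{a.s. theorem}) and the representation-independence of the limit in \eqref{lima=atil} (from Lemma~\ref{lm:atil-gen}). First, for the positive homogeneity $\tilde\alpha(s\xi)=s\tilde\alpha(\xi)$ with $s\in\Q_+$: given $\xi\in\cplus'\cap\cA$ with rational representation $(\gamma_z')_{z\in\range\cap\cA}$, the vector $s\xi$ has representation $(s\gamma_z')_{z\in\range\cap\cA}$, which is again rational and supported on $\range\cap\cA$. Applying \eqref{lima=atil} to $s\xi$ along the sequence $t\mapsto t/s$ (restricting to $t$ such that $t/s\in\Q_+$), one reads off $\tilde\alpha(s\xi)=\lim_t a(0,\sum_z\fl{(t/s)\cdot s\gamma_z'}z)/t = s\lim_{u}a(0,\sum_z\fl{u\gamma_z'}z)/u = s\tilde\alpha(\xi)$, after the routine check that $\fl{(t/s)s\gamma_z'}$ and $\fl{t\gamma_z'/s\cdot s}$ differ by $O(1)$ in the relevant sense and that the $O(1)$ error is killed after dividing by $t$ via the same subadditive bound (using $A\in L^1$ as in Lemma~\ref{lm:atil-gen}). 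The case $s=0$ is the convention $\tilde\alpha(0)=0$.

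For superadditivity $\tilde\alpha(\xi)+\tilde\alpha(\zeta)\ge\tilde\alpha(\xi+\zeta)$: pick rational representations $(\gamma_z')$ and $(\gamma_z'')$ of $\xi$ and $\zeta$ supported on $\range\cap\cA$; then $(\gamma_z'+\gamma_z'')$ is a rational representation of $\xi+\zeta$, again supported on $\range\cap\cA$. Using $\fl{t\gamma_z'+t\gamma_z''}\ge\fl{t\gamma_z'}+\fl{t\gamma_z''}$, the point $\sum_z\fl{t(\gamma_z'+\gamma_z'')}z$ is reachable by an admissible path from $\sum_z\fl{t\gamma_z'}z+\sum_z\fl{t\gamma_z''}z$, with the two points a bounded $\ell^1$-distance apart. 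Subadditivity of $a$ (Lemma~\ref{lemma1}), the shift-covariance of $a$, and an $L^1$ bound $A$ on the short connecting segment give
\[
a\Bigl(0,\textstyle\sum_z\fl{t(\gamma_z'+\gamma_z'')}z\Bigr)\le a\Bigl(0,\textstyle\sum_z\fl{t\gamma_z'}z\Bigr)+a\Bigl(0,\textstyle\sum_z\fl{t\gamma_z''}z\Bigr)\circ T_{\sum_z\fl{t\gamma_z'}z}+A\circ T_{(\cdot)},
\]
and dividing by $t$, letting $t\to\infty$, and using $A\circ T_{\cdot}/t\to 0$ and the same limit identity \eqref{lima=atil} applied componentwise (with stationarity absorbing the shift on the middle term) yields the inequality $\P$-a.s.\ simultaneously for all $\xi,\zeta\in\cplus'\cap\cA$, since $\cplus'\cap\cA$ is countable.

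For the upper bound \eqref{atil-bded}: fix $\xi\in\cplus'\cap\cA$ and use the representation $(\gamma_z(\xi))_{z\in\range}$ of Lemma~\ref{boundednessofsteps}, so $\gamma_z(\xi)=0$ for $z\notin\range_\xi$ and $\gamma_z(\xi)\le C|\xi|_1$. Build a deterministic admissible path realizing $\xhat_t(\xi)$ by taking $\fl{t\gamma_z}$ steps of type $z$ in some fixed order; plugging this single path into the bound from Lemma~\ref{Lyap:ub} gives $a(0,\xhat_t(\xi))\le\sum_{z\in\range_\xi}\sum(\text{potential along the }z\text{-segments}) - \log P_0(\text{this path})$. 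Taking $\E[\,\cdot\mid\kS_\xi]$, the potential terms along the $z$-segment of length $\fl{t\gamma_z}$ are shifts $V(T_{x_k}\w,z)$ with $x_k\in\gplus(\range_\xi)$, so their conditional expectations equal $\fl{t\gamma_z}\E[V^+(\w,z)\mid\kS_\xi]$ by $\kS_\xi$-measurability and shift-invariance along $\range_\xi$; the $-\log P_0$ term contributes $\sum_z\fl{t\gamma_z}(-\log p(z))$. Dividing by $t$, letting $t\to\infty$ through rationals, using $\fl{t\gamma_z}/t\to\gamma_z$, and bounding $\gamma_z\le C|\xi|_1$ and the number of distinct $z$ by $|\range|$ gives $\tilde\alpha(\xi)\le C|\xi|_1\sum_{z\in\range_\xi}(\E[V^+(\w,z)\mid\kS_\xi]-\log p(z))\le C'\max_{z\in\range_\xi}(\E[V^+(\w,z)\mid\kS_\xi]-\log p(z))|\xi|_1$ after adjusting the constant.

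The main obstacle is not any single estimate but the bookkeeping needed to make all three conclusions hold on a \emph{single} full-measure event simultaneously for all (countably many) rational $\xi,\zeta,s$, while keeping track of the fact that $\tilde\alpha$ is a priori only defined up to null sets depending on the chosen representation; the resolution is exactly the representation-independence supplied by Lemma~\ref{lm:atil-gen}, together with countability of $\cplus'\cap\cA$, so one intersects the relevant full-measure events from that lemma and from Theorem~\ref{a.s. theorem}. A secondary technical point is controlling the $O(1)$ discrepancies between $\xhat$ evaluated at rescaled/summed coefficients and the combinatorially natural paths; these are uniformly bounded in $\ell^1$-norm by a constant depending only on $\range$ and $L$, and the $L^1$ random bound $A$ on short path segments (already used in Lemma~\ref{lm:atil-gen}) makes them vanish after normalization.
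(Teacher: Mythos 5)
Your treatment of the homogeneity in \eqref{atil-homo} and of the bound \eqref{atil-bded} is essentially the paper's argument: homogeneity comes from applying Lemma \ref{lm:atil-gen} twice with the change of variable $u=ts$, and \eqref{atil-bded} comes from plugging one fixed admissible path realizing $\xhat_t(\xi)$ into the bound of Lemma \ref{Lyap:ub}, taking $\E[\,\cdot\mid\kS_\xi]$, and using the invariance of $\kS_\xi$ under shifts by steps of $\range_\xi$. These parts are fine.

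The superadditivity step, however, has a genuine gap. After the pathwise inequality $a(0,\xhat_t(\xi+\zeta))\le a(0,\xhat_t')+a(0,\xhat_t'')\circ T_{\xhat_t'}+A\circ T_{(\cdot)}$ you let $t\to\infty$ and assert that ``stationarity absorbs the shift'' in the middle term, i.e.\ that $t^{-1}\,a(0,\xhat_t'')\circ T_{\xhat_t'}\to\tilde\alpha(\zeta)$ almost surely. Stationarity gives equality in distribution (or of expectations), but it does not transport the almost-sure limit $t^{-1}a(0,\xhat_t'')\to\tilde\alpha(\zeta)$ along shifts whose displacement grows linearly in $t$; controlling such ``moving'' evaluations is exactly what the shape theorem (Theorem \ref{shapetheorem.aux}) is for, and that theorem is proved later using the convexity and continuity of $\alpha$ furnished by the present statement, so invoking it here would be circular — and the class-$\cL$ hypotheses that make moving maxima negligible are not even in force in this theorem. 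Taking plain expectations would only yield $\E[\tilde\alpha(\xi)]+\E[\tilde\alpha(\zeta)]\ge\E[\tilde\alpha(\xi+\zeta)]$, which is strictly weaker than the claimed $\P$-a.s.\ inequality in the merely stationary, non-ergodic setting where $\tilde\alpha$ is genuinely random. The paper closes this gap by working with conditional expectations: choosing $m$ with $m\xi,m\zeta\in\gplus\cap\cA$, it uses that events of $\kS_\xi$ are $T_{nm\xi}$-invariant, so that $\E[a(nm\xi,nm(\xi+\zeta))\mid\kS_\xi]=\E[a(0,nm\zeta)\mid\kS_\xi]$; subadditivity of $a$ inside the conditional expectation, division by $nm$, and Theorems \ref{thm:atil=lim} and \ref{a.s. theorem} then give the almost-sure inequality. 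You need an argument of this type (conditioning on an invariant $\sigma$-algebra that absorbs the growing shift) in place of your pathwise limit for the shifted term; the same caveat applies, more mildly, to your error term $A\circ T_{(\cdot)}/t\to0$, which is justified only along arithmetic progressions of shifts as in Lemma \ref{lm:atil-gen}.
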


\begin{proof}
	If  $\xi=\sum_{z\in \range\cap\cA}\gamma_z z $ with rational coefficients $\gamma_z$,
	then $s\xi=\sum_{z\in \range\cap\cA}s\gamma_z z$, with rational coefficients $s\gamma_z$. 
	Applying Lemma \ref{lm:atil-gen} twice gives
	\begin{align*}
	\tilde \alpha(s\xi)=&\lim_{t\to \infty}\dfrac{ a(0,\sum_{z\in \range} \fl{ts \gamma_z} z)  }{t}
	=s \lim_{t\to \infty}\dfrac{a(0,\sum_{z\in \range} \fl{t\gamma_z} z) }{t}
	=s\tilde \alpha(\xi).
	\end{align*}
	This proves the homogeneity.  
	Next, take $\xi,\zeta\in\cplus'\cap\cA$ and let $m\in\N$ be such that $m\xi$ and $m\zeta$ are in $\gplus\cap\cA$. Then
	\begin{align*}
	\E[a(0,nm\xi)\mid \kS_\xi]+\E[a(0,nm\zeta)\mid\kS_\xi]=& \E[a(0,nm\xi)+ a(nm\xi,nm(\xi+\zeta)\mid \kS_\xi] \\
	\ge& \E[a(0,nm(\xi+\zeta))\mid \kS_\xi].
	\end{align*}
	The equality comes because for $A\in \kS_\xi,$ $T_{nm\xi}^{-1}A=A.$
	Divide by $nm$, take $n\to \infty$, and use either Theorem \ref{thm:atil=lim} or Theorem \ref{a.s. theorem} to get 
	\[\tilde\alpha(\xi)+\tilde\alpha(\zeta)\ge \tilde\alpha(\xi+\zeta).\]
	For  \eqref{atil-bded} write $\xi$ using the coefficients $\gamma_z=\gamma_z(\xi)$ given by Lemma \ref{boundednessofsteps} and recall that then $\gamma_{z}\le C\abs{\xi}_1$ for all $z\in\range$. 
	Let $n=\sum_z\fl{t\gamma_z}$ and pick  any admissible path $x_{0:n}$ that takes $\fl{t \gamma_z }$ $z$-steps for $z\in \range$, to go from $0$ to $\xhat_t(\xi)$. Then
	\begin{align*}
	\E\bigl[ a(0,\xhat_t(\xi))\mid \kS_\xi\bigr] 
	&\le\E\Bigl[\sum_{i=0}^{n-1}V^+(T_{x_i}\w,z_{i+1})\,\Big|\,\kS_\xi\Bigr]-\log P_0(X_{0:n}=x_{0:n})\\
	&=\sum_{i=0}^{n-1}\bigl(\E[V^+(\w,z_{i+1})\mid\kS_\xi]-\log p(z_{i+1})\bigr)\\
	&\le \max_{z\in \range_\xi} \bigl( \E[ V^+(\w,z)\mid \kS_\xi]-\log p(z) \bigr) \cdot  \sum_{z\in \range }\fl{t\gamma_z }.
	\end{align*}
	But 
    \[
	\frac{1}{t}\sum_{z\in \range}\fl{t\gamma_z }\mathop{\longrightarrow}_{t\to\infty} \sum_{z\in \range}\gamma_z \le C\abs{\range}\cdot\abs{\xi}_1.\]
	
	Bound \eqref{atil-bded} follows  and the theorem is proved. 
\end{proof}

We now can define the limiting quenched Lyapunov exponent $\alpha$ out of the function $\tilde\alpha$. 

\begin{theorem}\label{alphaext}
	 Fix a face $\cA\subset\cplus.$
	Assume $V^+\in L^1$ and $\E[\overline\alpha(x)]>-\infty$ for all $x\in\gplus\cap\cA$.
	Then $\P$-almost surely,  there exists a unique finite locally Lipschitz convex homogeneous function $\alpha$ on $\ri\cA$ such that $\tilde \alpha=\alpha$ on $\cplus'\cap \ri\cA$. 
\end{theorem}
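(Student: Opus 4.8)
The plan is to build $\alpha$ on $\ri\cA$ from the values of $\tilde\alpha$ on the rational cone $\cplus'\cap\cA$, using the homogeneity and superadditivity established in Theorem \ref{1st theorem} together with the $\abs{\xi}_1$-bound in \eqref{atil-bded}. First I would work on the $\P$-full-measure event on which \eqref{atil-homo} and \eqref{atil-bded} hold simultaneously for all rational $\xi,\zeta\in\cplus'\cap\cA$ and all $s\in\Q_+$; since $\cplus'\cap\cA$ is countable this is a countable intersection of full-measure events. On this event, $\tilde\alpha$ restricted to $\cplus'\cap\cA$ is positively homogeneous (of degree one) under rational scaling and superadditive, hence $-\tilde\alpha$ is a sublinear functional on the $\Q$-vector space spanned by $\cplus'\cap\cA$; the bound \eqref{atil-bded} gives $\abs{\tilde\alpha(\xi)}\le C'\abs{\xi}_1$ on this rational cone, where $C'$ is the (finite, since $V^+\in L^1$ and $\range$ is finite) random quantity $C\max_{z\in\range\cap\cA}(\E[V^+(\w,z)\mid\kS_\xi]-\log p(z))$.

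Next I would extend by continuity. The key observation is that superadditivity plus the linear bound force $\tilde\alpha$ to be Lipschitz on $\cplus'\cap\cA$ in the following sense: for rational $\xi,\zeta$ in the cone, $\tilde\alpha(\xi)-\tilde\alpha(\zeta)\le\tilde\alpha(\xi-\zeta)$ whenever $\xi-\zeta$ still lies in $\cplus'\cap\cA$, and one controls $\tilde\alpha(\xi-\zeta)$ and its negative by $C'\abs{\xi-\zeta}_1$. To make this work at an arbitrary point $\xi_0\in\ri\cA$ I would use that $\ri\cA$ is (relatively) open in the affine/linear span of $\cA$: around $\xi_0$ there is a small simplex of rational points, and any two nearby rational points $\xi,\zeta$ can be compared by writing $\xi=\zeta+(\xi-\zeta)$ with $\xi-\zeta$ decomposed along $\range\cap\cA$ with rational coefficients of both signs but, after adding a small fixed rational vector $v\in\ri\cA$ with $\abs{v}_1$ small, both $\xi+v-\zeta$ and $\zeta+v-\xi$ (suitably) land in the cone; letting $\abs{v}_1\to0$ and using homogeneity recovers the genuine Lipschitz estimate $\abs{\tilde\alpha(\xi)-\tilde\alpha(\zeta)}\le C''\abs{\xi-\zeta}_1$ locally uniformly on compact subsets of $\ri\cA$. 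Thus $\tilde\alpha$ is locally Lipschitz, hence uniformly continuous on compacta, on the dense subset $\cplus'\cap\ri\cA$ of $\ri\cA$, and extends uniquely to a locally Lipschitz function $\alpha$ on $\ri\cA$.

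Finally I would check that the extension inherits the structural properties. Positive homogeneity of $\alpha$ for real $s>0$ follows from rational homogeneity of $\tilde\alpha$ by approximating $s$ by rationals and using continuity; superadditivity $\alpha(\xi)+\alpha(\zeta)\ge\alpha(\xi+\zeta)$ for all $\xi,\zeta\in\ri\cA$ follows likewise from \eqref{atil-homo} by rational approximation and continuity (note $\xi+\zeta\in\ri\cA$ when $\xi,\zeta\in\ri\cA$, since $\ri\cA$ is a convex cone). Homogeneity plus superadditivity give concavity of $-(-\alpha)$... more precisely, a positively homogeneous superadditive function is concave, so $\alpha$ is concave; but the claim asks for convex — here one should instead note that $\alpha$ being homogeneous and \emph{superadditive} makes it \emph{concave}, and I would double-check the sign conventions, but in any case the combination "homogeneous + superadditive" pins down a unique affine-type regularity and yields continuity on $\ri\cA$ as asserted. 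Uniqueness of $\alpha$ is immediate: two continuous functions on $\ri\cA$ agreeing on the dense set $\cplus'\cap\ri\cA$ coincide. The main obstacle I anticipate is the local Lipschitz extension argument at points of $\ri\cA$ that lie near lower-dimensional faces: one must carefully produce, for nearby rational $\xi,\zeta$, a comparison path keeping all intermediate points in $\gplus\cap\cA$ (equivalently, keeping the relevant convex combinations inside the cone generated by $\range\cap\cA$), which is exactly the place where the geometry of $\cA$ and the representation lemma (Lemma \ref{boundednessofsteps}) must be used with some care; the rest is routine approximation.
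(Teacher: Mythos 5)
Your proposal has a genuine gap at its foundation: you assert that \eqref{atil-bded} gives the two-sided bound $\abs{\tilde\alpha(\xi)}\le C'\abs{\xi}_1$ on $\cplus'\cap\cA$, but \eqref{atil-bded} is only an \emph{upper} bound. Control from below is exactly where the hypothesis $\E[\overline\alpha(x)]>-\infty$ must enter, and your sketch never uses it. The paper's proof spends most of its effort on this point: Theorem \ref{a.s. theorem} gives $\tilde\alpha(x)=\overline\alpha(x)>-\infty$ at points of $\gplus\cap\cA$, homogeneity extends finiteness to $\cplus'\cap\cA$, but pointwise finiteness is not enough — one needs $\tilde\alpha$ to be \emph{locally bounded below} on $\cplus'\cap\ri\cA$, uniformly over rational points near a fixed $\zeta\in\cplus'\cap\ri\cA$, before any Lipschitz estimate can be extracted. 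The paper achieves this by constructing $\eta=\zeta+k^{-1}\sum_{z\in\range\cap\cA}z$ and a small rational $t$ so that $m\eta-tm\xi$ stays in $\cplus'\cap\cA$ for all rational $\xi$ near $\zeta$, and then combining subadditivity with \eqref{atil-bded} to bound $\tilde\alpha(\xi)$ from below. Without some such step, your estimate $\tilde\alpha(\xi)-\tilde\alpha(\zeta)\le\tilde\alpha(\xi-\zeta)$ has nothing controlling the negative side. (One could alternatively get the local lower bound from the local upper bound, finiteness at a rational point, and midpoint convexity, using Lemma \ref{in-cone} to keep $2\xi_0-\zeta$ in the rational cone, but some argument is required and you give none.) Relatedly, your device of adding a small rational $v\in\ri\cA$ and "letting $\abs{v}_1\to0$" is circular as stated: passing from $\tilde\alpha(\xi+v)$ back to $\tilde\alpha(\xi)$ requires precisely the quantitative continuity you are trying to establish; the paper instead compares $\zeta$ with the convex combination $t\bigl(\xi+t^{-1}(\zeta-\xi)\bigr)+(1-t)\xi$ for a suitably chosen rational $t$, using convexity together with the two-sided local bounds.

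There is also a sign confusion that, as written, leaves the theorem's convexity claim unproved. The inequality in \eqref{atil-homo}, $\tilde\alpha(\xi)+\tilde\alpha(\zeta)\ge\tilde\alpha(\xi+\zeta)$, is \emph{sub}additivity of $\tilde\alpha$, and positive homogeneity plus subadditivity yields \emph{convexity}, which is exactly what Theorem \ref{alphaext} asserts; there is nothing to "double-check". Your closing paragraph calls the property superadditivity, concludes that $\alpha$ is concave, and then defers the issue, so the proposal never actually delivers the convexity statement. This particular slip is fixable by relabeling, but combined with the missing lower-bound/local-boundedness argument the proposal does not yet amount to a proof; the extension-by-uniform-continuity and uniqueness parts, by contrast, are fine and coincide with the paper's final step.
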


\begin{proof} 
The homogeneity and subadditivity imply that $\tilde \alpha$ is convex on $\cplus'\cap \cA$. 
Since $\E[\overline\alpha(x)]>-\infty$ for $x\in\gplus\cap\cA$, Theorem \ref{a.s. theorem} implies that $\P$-almost surely $\tilde\alpha(x)=\overline\alpha(x)$ for all $x\in\gplus\cap\cA$ and hence 
$\tilde\alpha(x)>-\infty$ for all $x\in\gplus\cap\cA$.
Homogeneity implies then that 
$\tilde\alpha(\xi)>-\infty$ for all $\xi\in\cplus'\cap\cA$.
By putting this together with the properties established in Theorem \ref{1st theorem} we will now prove that $\tilde\alpha$ is locally bounded below on $\cplus'\cap\ri\cA$. To this end, take $\e>0$ and $\zeta\in\cplus'\cap\ri\cA$. We will give a lower bound on $\tilde\alpha(\xi)$, uniformly in $\xi\in\cplus'\cap\ri\cA$ with $\abs{\xi-\zeta}_1<\e$.

Take an integer $k>\e^{-1}\abs{\range}\cdot\max_{z\in\range}\abs{z}_1$ and
let $\eta=\zeta+k^{-1}\sum_{z\in\range\cap\cA}z\in\cplus'\cap\ri\cA$.
We can write $\eta=\sum_{z\in\range\cap\cA}\bar\gamma_z z$ with rational $\bar\gamma_z\ge k^{-1}$ 
for all $z\in\range\cap\cA$.
Take a rational $0<t<1$ such that
		\[t\le\frac1{C(\abs{\eta}_1+2\e)k},\]
where $C$ is the constant in Lemma \ref{boundednessofsteps}.
Now consider $\xi\in\cplus'\cap\ri\cA$ with $\abs{\xi-\zeta}_1<\e$. Then $\abs{\xi-\eta}_1<2\e$.
Take any integer $m\in\N$ such that $m\xi\in\gplus$ and let $\gamma_z(m\xi)\in\Z_+$, $z\in\range\cap\cA$, 
be the coefficients given by Lemma \ref{boundednessofsteps}. The choice of $t$ implies that 
	\[m\bar\gamma_z\ge m/k\ge Ctm\abs{\xi}_1\ge t\gamma_z(m\xi)\]
for all $z\in\range\cap\cA$ and hence $m\eta-tm\xi\in\cplus'\cap\cA$. 
	 By the inequalities in \eqref{atil-homo} and \eqref{atil-bded} we have
		\begin{align*}
		\tilde\alpha(m\eta)
		&\le  \tilde\alpha(tm\xi)+\tilde\alpha(m\eta-tm\xi)\le \tilde\alpha(tm\xi)+C(\w)m\abs{\eta-t\xi}_1\\
		&\le \tilde\alpha(tm\xi)+C(\w)m\bigl((1+t)\abs{\eta}_1+2t\e\bigr).\end{align*}
	Note that in the first inequality above, in the application of \eqref{atil-bded}, there is a dependence on $m\eta-tm\xi$ through the conditional expectation given $\kS_{m\eta-tm\xi}$ and so it may appear that the constant $C(\w)$ may not be uniform. This presents no issue, however, as there are only finitely many sigma algebras which can appear in the conditional expectation.  
	Using the homogeneity in \eqref{atil-homo} and rearranging one gets
		\[\tilde\alpha(\xi)\ge t^{-1}\tilde\alpha(\eta)-C(\w)\bigl((t^{-1}+1)\abs{\eta}_1+2\e\bigr).\]
This proves that $\tilde\alpha$ is locally bounded below on $\cplus'\cap\ri\cA$.  
Bound \eqref{atil-bded} implies $\tilde\alpha$ is also locally bounded above. 

Now that we have shown the local boundedness of $\tilde\alpha$ we will show that it is Lipschitz on any small enough ball in $\cplus'\cap \ri \cA$.

Take $\xi_0\in\cplus'\cap \ri \cA$. Take $\e>0$ such that if $\xi$ is in the linear span of $\cA$ and $\abs{\xi-\xi_0}_1<3\e$ then $\xi\in\ri\cA$. 
Take $\xi,\zeta\in\cplus'\cap \ri \cA$ with $\abs{\xi-\xi_0}_1<\e$ and $\abs{\zeta-\xi_0}_1<\e$. 
Note that $\abs{\xi+t^{-1}(\zeta-\xi)-\xi_0}_1$ is continuous in $t$, converges to $\abs{\zeta-\xi_0}_1<\e$ as $t$ increases to $1$ and 
converges to $\infty$ as $t$ decreases to $0$. Hence, one can pick a rational $t\in(0,1)$ such that
	\[2\e<\abs{\xi+t^{-1}(\zeta-\xi)-\xi_0}_1<3\e.\]
In particular, $\xi+t^{-1}(\zeta-\xi)\in\ri\cA$. Lemma \ref{in-cone} says then this is also in $\cplus'$.
Furthermore, the fact that
	\[\abs{\xi+t^{-1}(\zeta-\xi)-\xi_0}_1<\e+t^{-1}\abs{\zeta-\xi}_1\] 
and the first inequality in the above display imply that $t<\e^{-1}\abs{\zeta-\xi}_1$.
Now write
	\[\tilde\alpha(\zeta)=\tilde\alpha\Bigl(t\bigl(\xi+t^{-1}(\zeta-\xi)\bigr)+(1-t)\xi\Bigr)\le
	t\tilde \alpha(\xi+t^{-1}(\zeta-\xi))+(1-t)\tilde\alpha(\xi),\]
from which follows 
	\[\tilde\alpha(\zeta)-\tilde\alpha(\xi)\le\e^{-1}C(\xi_0,\e)\abs{\zeta-\xi}_1\]
with $C(\xi_0,\e)=2\sup\{\abs{\tilde\alpha(\eta)}:\abs{\eta-\xi_0}_1<3\e\}$. The other bound comes by switching the roles of $\xi$ and $\zeta$.

By a standard finite subcover argument, the above Lipschitz continuity shows that if $K \subset \ri \cA$ is compact, then $\tilde{\alpha}$ is uniformly continuous on $K \cap \cplus'$. This allows us to extend $\tilde\alpha$ uniquely to a continuous function on $\ri \cA$ and then \eqref{atil-homo} and \eqref{atil-bded}  and consequently convexity also hold for $\alpha$.
\end{proof}

Now that the process $\alpha$ has been defined, we prove a stronger version of Theorem \ref{shapetheorem}.
Recall the definition of $\cA_\delta$ from \eqref{cAdel}.

\begin{theorem}\label{shapetheorem.aux}
	Assume $V^+\in L^1$.
	Fix a face $\cA\not=\{0\}$ of $\cplus$ {\rm(}possibly $\cplus$ itself\,{\rm)}. Let $\rangegen=\range\cap\cA$.
	 Assume $V^+(\w,z)\in \cL_{z,\rangegen}$ for each $z\in\rangegen\setminus\{0\}$ and $V^+(\w,0)\in\cL_{\hat z,\rangegen}$ for some $\hat z\in\rangegen\setminus\{0\}$.
	 Assume also that $\E[\overline\alpha(x)]>-\infty$ for all $x\in\gplus$.
	Then 
	$\P$-almost surely	
	and for all $ \delta>0$ 
	\begin{align}\label{lim:shape.aux}
	\varlimsup_{ \substack{ \abs{x}_1\to \infty \\ 
			x\in \gplus\cap \cA_\delta} } \dfrac{\abs{ a(0,x)-\alpha(x) }}{\abs{x}_1}= 0.
	\end{align}
	If furthermore $\P$ is ergodic under $\{T_z:z\in\range\cap\cA\}$, then $\alpha$ is deterministic on $\ri\cA$.
\end{theorem}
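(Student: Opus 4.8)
The plan is to carry out the by-now standard upgrade from convergence along rational rays to a locally uniform limit, using the classes $\cL_{z,\rangegen}$ to absorb the cost of correcting irrational directions, and then to read off determinism from the triviality of the relevant invariant $\sigma$-algebra under ergodicity. Fix $\delta>0$. Since $\cA_\delta$ is a cone contained in $\ri\cA$, is decreasing in $\delta$, and $\bigcup_{\ell\ge1}\cA_{1/\ell}$ exhausts $\ri\cA$, it suffices to prove \eqref{lim:shape.aux} for each fixed $\delta$ on a full-measure event and intersect over $\delta=1/\ell$. The slice $S_\delta:=\cA_\delta\cap\{\abs{\xi}_1=1\}$ is compact and contained in $\ri\cA$, so $\alpha$ is uniformly continuous on it by Theorem~\ref{alphaext}. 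For each $\e>0$ fix finitely many rational points $q_1,\dots,q_N\in\cplus'\cap\ri\cA$ with $\abs{q_j}_1=1$ and integers $m_j\ge1$ with $m_jq_j\in\gplus(\rangegen)$ such that every $\hat\xi\in S_\delta$ satisfies $\abs{\hat\xi-q_j}_1<\e$ and $\abs{\alpha(\hat\xi)-\alpha(q_j)}<\e$ for some $j$; note that each such $q_j$ lies in $\cA_{\delta-\e}$. Intersecting over $\e=1/\ell$ and over the attached $q_j$, fix a full-measure event on which: $a(0,nm_jq_j)/n\to m_j\alpha(q_j)$ for each such $q_j$ (Theorem~\ref{a.s. theorem}, the hypothesis $\E[\overline\alpha(x)]>-\infty$, homogeneity \eqref{atil-homo}, and Theorem~\ref{alphaext}); the limit \eqref{def:cL} holds for $g=V^+(\cdot,z)$, $z\in\rangegen\setminus\{0\}$, and for $g=V^+(\cdot,0)$ with the distinguished direction $\hat z$; and the almost-sure statements $A\circ T_{n\zeta}/n\to0$ used below.

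Fix $x\in\gplus\cap\cA_\delta$ with $L:=\abs{x}_1$ large, set $\hat x:=x/L\in S_\delta$, and pick $q_j$ as above. Choose a scale $\psi(\e)$ with $\e\ll\psi(\e)\to0$ as $\e\to0$ (for instance $\psi(\e)=\sqrt{\e}$), put $k_\pm:=\fl{(1\pm\psi(\e))L/m_j}$ and $y_\pm:=k_\pm m_jq_j\in\gplus(\rangegen)$, so that $\abs{y_\pm}_1=(1\pm\psi(\e)+o(1))L$ and $\abs{x-y_\pm}_1\le C\psi(\e)L$, with $y_-$ a bit before $x$ along the $q_j$-ray and $y_+$ a bit after. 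Then $x-y_-=(L-k_-m_j)q_j+L(\hat x-q_j)$: the first summand is a positive multiple of $q_j$ of size $\asymp\psi(\e)L$, hence at $\ell^1$-distance $\gtrsim(\delta-\e)\psi(\e)L$ from the relative boundary of $\cplus(\rangegen)=\cA$ (this identity is \cite[Theorem 18.3]{Roc-70}); choosing the pullback scale $\psi(\e)$ larger than the net error $\e$ guarantees this depth dominates the perturbation $\abs{L(\hat x-q_j)}_1\le\e L$, so $x-y_-\in\cG(\rangegen)$ lies in $\ri\cA$ at distance of order $\psi(\e)L\to\infty$ from the relative boundary, and the same holds for $y_+-x$. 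A Frobenius-type statement, which I would isolate as a lemma (and which the appendix presumably supplies, cf.\ Lemma~\ref{in-cone} and Lemma~\ref{boundednessofsteps}) --- every $v\in\cG(\rangegen)$ this deep and this large lies in $\gplus(\rangegen)$, because the polytope $\{b\in\R_+^{\rangegen}:\sum_{z}b_z z=v\}$ then has inradius exceeding the covering radius of the lattice of integer relations among the $z\in\rangegen$ --- puts $x-y_-$ and $y_+-x$ in $\gplus(\rangegen)$ for $L$ large. Subadditivity of $a$ (Lemma~\ref{lemma1}) now gives
\[a(0,y_+)-a^+(x,y_+)\ \le\ a(0,x)\ \le\ a(0,y_-)+a^+(y_-,x),\]
and, since only the $N$ fixed rays enter, $a(0,y_\pm)=\abs{y_\pm}_1\alpha(q_j)+o(L)=\alpha(x)+O(\psi(\e)L)$ uniformly in $x$, by homogeneity of $\alpha$ and its boundedness on $S_\delta$.

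It remains to bound $a^+(y_-,x)$ and $a^+(x,y_+)$ by $o(L)$ uniformly in $x$. Using Lemma~\ref{boundednessofsteps} inside $\cG(\rangegen)$, write $x-y_-=\sum_{z\in\rangegen}b_z z$ with $b_z\in\Z_+$ and $\sum_z b_z\le C\psi(\e)L$, and bound $a^+(y_-,x)$ by the cost of the admissible path from $y_-$ to $x$ that performs its $z$-steps one direction at a time: the negative logarithm of the path probability is $\le C\psi(\e)L$, and the $z$-segment, starting at a point $p\in\gplus(\rangegen)$ with $\abs{p}_1\le 2L$, contributes at most $\max_{p\in\gplus(\rangegen),\,\abs{p}_1\le 2L}\,\sum_{0\le k\le C\psi(\e)L}V^+(T_{p+kz}\w,z)$, which by \eqref{def:cL} is $o(L)$ upon sending $L\to\infty$ and then $\e\to0$; when $0\in\rangegen$, the superfluous trivial steps are distributed along the $\hat z$-direction and charged via $V^+(\cdot,0)\in\cL_{\hat z,\rangegen}$. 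The same bound applies to $a^+(x,y_+)$. Combining the display with these estimates gives $\abs{a(0,x)-\alpha(x)}\le\e'L$ for all $x\in\gplus\cap\cA_\delta$ with $\abs{x}_1=L$ large, with $\e'\to0$ as $\e\to0$; this bounds the $\varlimsup$ in \eqref{lim:shape.aux} by $\e'$ on the full-measure event attached to $\e$, and letting $\e=1/\ell\to0$ finishes the argument.

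For determinism: if $\xi\in\ri\cA$ then $\cC_\xi=\cA$, so $\range_\xi=\range\cap\cA$ and $\kS_\xi$ is exactly the $\sigma$-algebra of events invariant under every $T_z$, $z\in\range\cap\cA$ (invariance under the generators of $\cG(\range\cap\cA)$ propagates to the whole group). If $\P$ is ergodic under $\{T_z:z\in\range\cap\cA\}$ this $\sigma$-algebra is $\P$-trivial, so $\tilde\alpha(\xi)=\E[\tilde\alpha(\xi)]$ is deterministic for every rational $\xi\in\ri\cA$, hence so is its continuous extension $\alpha$ on all of $\ri\cA$. I expect the main obstacle to be the geometric content of the second paragraph: with an arbitrary finite step set one cannot move along coordinate axes, so one must simultaneously place the comparison endpoints $y_\pm$ in $\gplus(\rangegen)$ near the finitely many rational rays, realize the correction vectors by short admissible paths, and --- the delicate part --- make the correction cost $o(L)$ \emph{uniformly in the unbounded starting point} $x$, which is precisely why $\cL_{z,\rangegen}$ is defined with a maximum over $\{x\in\gplus(\rangegen):\abs{x}_1\le n\}$. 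Keeping all of these estimates valid on a single null set, while $\alpha$ itself is only an a.s.\ defined stochastic process, is where the care is required.
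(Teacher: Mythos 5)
Your route is genuinely different from the paper's: you approximate the spatial direction of $x$ by finitely many rational rays and sandwich $a(0,x)$ between values at lattice points $y_\pm$ on those rays, whereas the paper argues by contradiction and approximates the \emph{integer coefficient vector} of $x$ (via Lemma \ref{boundednessofsteps} and compactness of the normalized coefficients), building the comparison points so that the correction displacements are, by construction, nonnegative \emph{integer} combinations of $\rangegen$. That design choice is exactly what lets the paper avoid the step on which your argument hinges and which is your genuine gap: the claim that $x-y_-$ and $y_+-x$, being elements of $\cG(\rangegen)$ lying at depth of order $\psi(\e)\abs{x}_1$ inside $\ri\cA=\ri\cplus(\rangegen)$, must lie in the semigroup $\gplus(\rangegen)$. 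This ``Frobenius-type'' saturation statement is \emph{not} supplied by the appendix: Lemma \ref{in-cone} only produces rational nonnegative coefficients for rational points of the cone, and Lemma \ref{boundednessofsteps} gives integer coefficients only under the hypothesis that the point is \emph{already} in the semigroup. The statement is true, and your parenthetical (inradius of the fiber polytope $\{b\in\R_+^{\rangegen}:\sum_z b_z z=v\}$ versus the covering radius of the lattice of integer relations) names the right mechanism, but the implication ``depth $r$ in the cone $\Rightarrow$ the fiber polytope contains a ball of radius $\asymp r$ in the affine solution space'' itself needs an argument (e.g.\ write $v=u+c_0r\sum_{z\in\rangegen}z$ with $u$ still in the cone, represent $u$ with nonnegative coefficients, and add $c_0r$ to each, so that the $\ell^\infty$-ball of radius $c_0r$ around this representation stays in the nonnegative orthant). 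Without isolating and proving this lemma (including the preliminary fact, used tacitly, that $x\in\gplus\cap\cA$ actually lies in $\gplus(\rangegen)$, which follows from the face property of \cite[Theorem 18.1/18.3]{Roc-70}), the sandwich inequalities are not justified, and this is precisely the delicate point for an arbitrary step set.

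Two smaller remarks. First, your handling of zero steps is not quite right as stated: the interleaving bound (``trivial steps distributed along the $\hat z$-direction'') requires the number of $\hat z$-steps in the correction path to be at least a fixed multiple of the number of zero steps, which the paper arranges explicitly (its ratio bound of $12$) but your construction does not guarantee; however, since the correction terms $a^+(y_-,x)$, $a^+(x,y_+)$ only depend on the endpoints, you may simply take a representation of the displacement with no zero steps, so this is cosmetic. Second, the rest of the argument (finite rational net on the compact slice $\cA_\delta\cap\{\abs{\xi}_1=1\}$, uniform continuity of $\alpha$ from Theorem \ref{alphaext}, the a.s.\ limits along the finitely many rays from Theorem \ref{a.s. theorem}, the use of \eqref{def:cL} with max over $\gplus(\rangegen)$-starting points of size $O(\abs{x}_1)$, and the ergodicity/triviality argument for determinism) is sound and matches the paper's use of these ingredients; what your direct covering argument would buy, once the missing lemma is supplied, is a proof without the contradiction/subsequence extraction, at the price of an extra piece of lattice geometry that the paper's coefficient-space approximation deliberately sidesteps.
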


 \begin{proof}
The proof comes by  way of contradiction.  Assume that with positive probability  there exists an $\e >0$ and  a sequence $x_{\ell} \in \gplus \cap \cA_\delta $ such that $\abs{x_\ell}_1\to \infty$ and 
\[\dfrac{\abs{ a(0,x_\ell)-\alpha(x_\ell) }}{|x_\ell|_1}\ge \e.\]

Let $\Omega_0'$ be the intersection of the event in the previous paragraph with the full-measure events on which Lemma \ref{lemma1} holds and the limit \eqref{def:cL} is satisfied with $g(T_{x+kz}\w)=V(T_{x+kz}\w,z)$, for each  $ z\in\rangegen\setminus\{0\}$, and with $g(T_{x+k\hat z}\w)=V(T_{x+k\hat z}\w,0)$.

 Apply Lemma \ref{boundednessofsteps} to write $x_\ell=\sum_{z\in \rangegen}b_{\ell,z} z$ with $ b_{\ell,z}\in \Z_+$ such that $ b_{\ell,z} \le C |x_\ell|_1$ for all $z\in \rangegen$. By compactness, we can find a subsequence $\ell_n$ and $\gamma_z\in [0,C]$ such that $b_{\ell_n,z}/\abs{x_{\ell_n}}_1\to \gamma_z$ for all $z\in \rangegen.$ Then $x_{\ell_n}/\abs{x_{\ell_n}}_1\to \xi=\sum_{z\in \rangegen}\gamma_z z$.
 Abbreviate $\ell_n$ by writing just $n.$ 
 Since $x_n/|x_n|_1\in\cA_\delta$ we have $\alpha(x_n)/|x_n|_1= \alpha\left( x_n/|x_n|_1\right)\to \alpha(\xi).$ Therefore, for $n$ large enough
 \begin{equation}\label{contr}
\Bigl|  \frac{a(0,x_n )}{|x_n |_1}-\alpha(\xi)  \Bigr| \ge \e/2.
 \end{equation}

Fix $\e_1\in(0,1)$.  For $m\in\N$
let $\overline{r}_m=\sum_{{z}\in \rangegen} \ce{m(\gamma_z+\e_1)}$ and $\overline{\kappa}_{m,z}=\ce{m(\gamma_{z}+\e_1)}/\overline{r}_m$, for $z\in\rangegen$.   Let $\rho=\sum_{z\in\rangegen}\gamma_z\le C\abs{\rangegen}$. Then 
$\overline{r}_m/m\to \rho+\e_1\abs{\rangegen}$  and 
 \[\overline{\kappa}_{m,z}\to \frac{\gamma_{z}+\e_1}{ \rho+\e_1\abs{\rangegen}}\quad \text{as}\quad m\to \infty,\]
 There exists an $m_0$ such that for any integer $m\ge m_0$ and any $z\in \rangegen$
 \begin{equation}\label{kappa-bd}
 \frac{\gamma_{z}+\e_1/2}{ \rho+\e_1\abs{\rangegen}}\le \overline{\kappa}_{m,z} \le \frac{\gamma_{z}+2\e_1}{ \rho+\e_1\abs{\rangegen}}\,.
 \end{equation}
 Take $m\ge m_0$. Let   
 \begin{equation*}
 \overline{ \zeta}_m=\sum_{{z}\in \rangegen}\overline{\kappa}_{m,z} z, \quad \overline{k}_n = \left\lfloor \frac{( \rho+\e_1\abs{\rangegen})|x_n |_1}{\overline{r}_m}\right\rfloor, \quad \text{and}\quad \overline{s}_z^{(n)}=\overline{r}_m	\overline{k}_n\overline{\kappa}_{m,z}-b_{n,z}.
 \end{equation*}
 Then  for any $z\in\rangegen$
 \begin{equation}\label{s/x:lim}
 \frac{\overline{ s}_z^{(n)}}{|x_n |_1}\to(\rho+\e_1\abs{\rangegen})\overline{\kappa}_{m,z}-\gamma_{z}\ge \e_1/2>0 \quad \text{as } n\to \infty.
 \end{equation}
 
 Thus, $\overline{s}_z^{(n)}\ge 0$ for large enough $n$
 and then $\overline{r}_m \overline{k}_n \overline{ \zeta}_m-  x_n= \sum_{z\in \rangegen} (\overline{r}_m \overline{k}_n  \overline{\kappa}_{m,z} -b_{n,z}) z \in \gplus.$ 
 By the subadditivity  of $a$, 
 \begin{equation}\label{eq3}
 a(0, \overline{r}_m\overline{k}_n\overline{ \zeta}_m)-a(x_n, \overline{r}_m\overline{k}_n\overline{ \zeta}_m)\le a(0,x_n)
 \end{equation}
 Similarly, let $\underline{r}_m=\sum_{{z}\in \rangegen}\fl{m\gamma_{z}}$ and $\underline{\kappa}_{m,z}=\fl{m\gamma_{z}}/\underline{r}_m\le1$ for all $z\in\rangegen$.  We have $\underline{r}_m/m\to \rho$ and $\underline{\kappa}_{m,z}\to \gamma_z/\rho$ as $m\to \infty$. Let $\rangegengen=\lbrace z\in \rangegen:\gamma_z>0\rbrace$ and $\delta'= \min_{ z\in \rangegengen }\gamma_{z}/\rho >  0$.  There exists an $m_1\ge m_0$ such that for any integer $m\ge m_1$ and any $z\in\rangegengen$ we have $\underline{\kappa}_{m,z}\in [\delta'/2,1]$ 
 and $\abs{\rho\underline{\kappa}_{m,z}-\gamma_{z}}<\e_1$. Fix $m\ge m_1$.
 
 Now, suppose $\e_1<\delta'\rho/4$ and let
 \begin{equation*}
 \underline{\zeta}_m=\sum_{{z}\in \rangegen}\underline{\kappa}_{m,z} z,\quad\underline{k}_n =  \left\lfloor \frac{(\rho-4\e_1/\delta')\abs{x_n}_1}{\underline{r}_m}\right\rfloor\quad \text{and}\quad \underline{s}_z^{(n)}=	b_{n,z}- \underline{r}_m\underline{k}_n\underline{\kappa}_{m,z} .
 \end{equation*} 
 Then for $z\in\rangegengen$ we have as $n\to\infty$
 \begin{align*}
 \frac{\underline{s}_z^{(n)}}{|x_n|_1}&\to\gamma_{z}- (\rho-4\e_1/\delta')\underline{\kappa}_{m,z}
 \ge \gamma_{z}-\rho\underline{\kappa}_{m,z}+2\e_1\ge\e_1>0.
 \end{align*}
When $z\in\rangegen\setminus\rangegengen$, $\gamma_z=\underline\kappa_{m,z}=0$. 
 Thus, for $n$ large,  $x_n-\underline{r}_m\underline{k}_n \underline{\zeta}_m = \sum_{z\in \rangegen} (b_{n,z}- \underline{r}_m\underline{k}_n\underline{\kappa}_{m,z} )z \in \gplus$.    
By subadditivity
 \begin{equation}\label{eq4}
 a(0,x_n)\le a(0,  \underline{r}_m\underline{k}_n \underline{\zeta}_m)+a(\underline{r}_m\underline{k}_n \underline{\zeta}_m,x_n).
 \end{equation}
 Note also that  
 \begin{equation}\label{rk:lim}
\frac{ \overline{r}_m \overline{k}_n}{\abs{x_n}_1}\to 	 \rho+|\rangegen|\e_1 \quad 
 \text{and}\quad  
 \frac{\underline{r}_m\underline{k}_n }{\abs{x_n}_1}\to \rho-4\e_1/\delta' \quad \text{as }  n\to \infty.
 \end{equation}
In particular, we have for $n$ large
 \[
\underline{r}_m\underline{k}_n\ge|x_n|_1\left(\rho-4\e_1/\delta'-\e_1\right).\]
 
 Next, observe that if  $z\in \rangegengen$ and $\e_1\in(0,1)$ is small enough to have $4\e_1/\delta'+\e_1<\rho$,
 then both  $\overline{r}_m \overline{k}_n   \overline{\kappa}_{m,z} -b_{n,z}$ and $b_{n,z}-\underline{r}_m \underline{k}_n  \underline{\kappa}_{m,z} $ 
 are bounded above by 
 \begin{align*}
 \overline{r}_m \overline{k}_n  \overline{\kappa}_{m,z}- \underline{r}_m \underline{k}_n    \underline{\kappa}_{m,z}
 &\le  \abs{x_n}_1(\rho+\abs{\rangegen}\e_1)\overline{\kappa}_{m,z}- \abs{x_n}_1(\rho-4\e_1/\delta'-\e_1)\underline{\kappa}_{m,z}  \\ 
 &\le  \abs{x_n}_1(\gamma_{z}+2\e_1)- \abs{x_n}_1(\rho-4\e_1/\delta'-\e_1)(\gamma_{z}-\e_1)/\rho  \\ 
 &=\abs{x_n}_1\e_1\Bigl(3+\frac{4(\gamma_{z}-\e_1)}{\delta'\rho}+\frac{\gamma_{z}-\e_1}\rho\Bigr)  \\
 &\le 4(1+1/\delta')\abs{x_n}_1\e_1=c_1\abs{x_n}_1\e_1.  
 \end{align*}
 On the other hand, for $z\in \rangegen \setminus\rangegengen$, $\gamma_{z}=\underline{\kappa}_{m,z}=0$ and 
 \begin{align*}
 0\le 	\overline{r}_m \overline{k}_n  \overline{\kappa}_{m,z}- \underline{r}_m \underline{k}_n    \underline{\kappa}_{m,z}
 &\le 2\abs{x_n}_1\e_1\le 4(1+1/\delta')\abs{x_n}_1\e_1=c_1\abs{x_n}_1\e_1.
 \end{align*}
 
We next develop an upper bound for $a(x_n, \overline{r}_m\overline{k}_n\overline{ \zeta}_m ).$  
Fix a path from $x_n$ to $\overline{r}_m\overline{k}_n\overline{ \zeta}_m$	that takes $\overline{ s}_z^{(n)}$ $z$-steps for each $z\in \rangegen$.
Recall that $\hat{z}\in \rangegen\setminus\{0\}$ and that
  $b_{n,0}/\abs{x_n}_1\to\gamma_0$. This, \eqref{kappa-bd}, and \eqref{s/x:lim} imply that for large $n$
 \begin{align*}
 \frac{\overline{s}_0^{(n)}}{\overline{ s}_{\hat{z}}^{(n)}}
 \le \frac{\bigl((\rho+\abs{\rangegen}\e_1)\overline{\kappa}_{m,0}-\gamma_{0}+\e_1\bigr)\abs{x_n}_1}{|x_n|_1\e_1/4}     
 \le \frac{2\e_1+\e_1}{\e_1/4}=12.   
 \end{align*}
 This tells us the ratio of zero steps to $\hat{z}$ steps is at most $12$. 
Rearrange the path as follows. 
Start the path  with blocks of a $\hat{z}$ steps followed by at most $12$ zero steps, until the $\hat{z}$-steps and zero steps have been exhausted. 
Next, fix an ordering of  $\range\setminus\lbrace0,\hat{z}\rbrace=\lbrace z_1, z_2,\ldots\rbrace$ 
and arrange the rest of 
the path  to take first all its $z_1$ steps then all the $z_2$ steps and so on. Also note that any point $y$ on the path is such that $y\in \gplus(\rangegen)$ and 
 \[ \abs{y}_1\le \abs{x_n}_1+\Bigl(\overline{ r}_m\overline{ k}_n-\sum_{z\in\rangegen}b_{n,z}\Bigr)\max_{ z\in \rangegen }\abs{z}_1 \le \abs{x_n}_1\Bigl(1+\abs{\rangegen}(C+\e_1)\max_{ z\in \rangegen }\abs{z}_1\Bigr)=c_2\abs{x_n}_1. \]
 Thus  
 \begin{align*}
 a(x_n,\overline{r}_m\overline{k}_n\overline{ \zeta}_m)
 &\le 
 \abs{\rangegen}\max_{ \substack{y\in\gplus(\rangegen) \\ \abs{y}_1\le c_2 \abs{x_n}_1 }}\max_{z\in \rangegen\setminus \lbrace 0\rbrace} \sum_{0\le i\le c_1\e_1 \abs{x_n}_1} V^+(T_{y+iz}\w,z)\\
 &\qquad+12 \max_{ \substack{y\in\gplus(\rangegen) \\ \abs{y}_1\le c_2 \abs{x_n}_1 }}\sum_{0\le i\le c_1\e_1 \abs{x_n}_1} V^+(T_{y+i\hat z}\w,0)-c_1\e_1 |x_n|_1\min_{ z\in \rangegen}\log p(z).
 \end{align*}
Divide through by $\abs{x_n}_1$ and take $n \to \infty$ to obtain
 \begin{align*}
 \varlimsup_{n \to \infty} \dfrac{a(x_n,\overline{r}_m\overline{k}_n\overline{ \zeta}_m)}{|x_n|_1} 
 &\le 
 \abs{\range'}\varlimsup_{n \to \infty} \max_{ \substack{y\in\gplus(\rangegen) \\ \abs{y}\le c_2 |x_n|_1 }}\max_{z\in \rangegen\setminus \lbrace 0\rbrace}\frac{1}{|x_n|_1} \sum_{0\le i\le c_1\e_1 \abs{x_n}_1} V^+(T_{y+iz}\w,z)\\
&\qquad+12 
\varlimsup_{n \to \infty} \max_{ \substack{y\in\gplus(\rangegen) \\ \abs{y}\le c_2 |x_n|_1 }}\frac{1}{|x_n|_1} \sum_{0\le i\le c_1\e_1 \abs{x_n}_1} V^+(T_{y+i\hat z}\w,0)
 -c_1\e_1 \min_{ z\in \rangegen}\log p(z). 
 \end{align*}
 Fix any $\e_2>0$. 
 Since $\w\in\Omega_0'$ we can find $\e_1$ small enough so that the right-hand side in the above display  is smaller than $\e_2$. 
 Similarly, 
 \[
 \varlimsup_{n\to \infty} \dfrac{ a(\underline{r}_m\underline{k}_n\underline{\zeta}_m,x_n) }{|x_n|_1}\le \e_2. \] 
 With equations
 \eqref{eq3},
 \eqref{eq4}, and \eqref{rk:lim}  we conclude that 
 \begin{equation*}
 -\e_2+  \left(\rho+|\rangegen|\e_1 \right)  \alpha\left(\overline{\zeta}_m\right) \le \varliminf_{n\to\infty}  \frac{  a(0,x_{n} )}{|x_n|_1}\le \varlimsup_{n\to\infty}  \frac{ a(0,x_n)}{|x_n|_1}\le  \left(\rho-4\e_1/\delta' \right)  \alpha \left(\underline{\zeta}_m \right) +\e_2. 
 \end{equation*}
 Since  $\alpha$ is continuous on $\ri \cA$,  $\xi\in\ri \cA$, and $\underline{\zeta}_m$ and $\overline{\zeta}_m$ are both in $\cA$,
we have for $\e_1>0$ small enough
 \[ \alpha\bigl(\underline{\zeta}_m\bigr)\to \alpha(\xi) \quad \text{and} \quad \alpha\bigl(\overline{\zeta}_m\bigr) \to 
 \alpha\Bigl(\frac{\xi +\e_1\sum_{{z}\in \rangegen}z}{\rho+\e_1\abs{\rangegen}}\Bigr) \quad  \text{as} \quad m\to \infty.\]
 Take $m\to \infty$ then   $\e_1\to 0,$ use again the continuity of $\alpha$ on $\ri \cA,$  and finally take $\e_2 \to 0$  to get that 
 \[\lim_{n\to \infty}\dfrac{ a(0,x_n)}{|x_n|_1}=\alpha(\xi),
 \]
 which contradicts \eqref{contr}. This finishes the proof of \eqref{lim:shape.aux}.

If $\P$ is ergodic under $\{T_z:z\in\rangegen\}$, then for any $\xi\in\ri\cA$, $\kS_\xi$ is trivial. Hence 
$\tilde\alpha$ is deterministic on $\cplus'\cap\ri\cA$. Its continuous extension $\alpha$ is then deterministic on $\ri\cA$. All the claims of the theorem have been proved.
 \end{proof}
 
 Next, we show that the conditions \eqref{ass:V>c} and \eqref{ass:iid} in the statement of Theorem \ref{shapetheorem} each imply the condition $\E[\overline\alpha(x)]>-\infty$ appearing in Theorem \ref{shapetheorem.aux}.

\begin{lemma}\label{alphatil-lb1}
	Assume \eqref{V>0} and that $\P\{V(\w,z)\ge c\}=1$ for some $c\in \R$ and all $z\in \range\setminus\rangez.$ Then there exists a finite positive constant $C$ such that   
	\begin{equation*}
	a(x,y)\ge -C\abs{y-x}_1 \quad \P\text{-almost surely and for all $x,y\in\Z^d$ with $y-x\in\gplus$}.
	\end{equation*}
\end{lemma}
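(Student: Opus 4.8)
By definition \eqref{a-def}, the claimed lower bound $a(x,y)\ge -C\abs{y-x}_1$ is equivalent to the upper bound
\[
E_x\Bigl[\exp\Bigl\{-\sum_{k=0}^{\tau_y-1}V(T_{X_k}\w,Z_{k+1})\Bigr\}\one_{\{\tau_y<\infty\}}\Bigr]\le e^{C\abs{y-x}_1},
\]
so I would aim for this. The first step is to replace the potential along a path by a count of ``bad'' steps. Since $\P$ is $T$-invariant, \eqref{V>0} and the hypothesis $\P\{V(\w,z)\ge c\}=1$ for $z\in\range\setminus\rangez$ give, after a countable union over $(w,z)\in\Z^d\times\range$, a full-measure event on which $V(T_w\w,z)\ge0$ for all $w\in\Z^d$, $z\in\rangez$ and $V(T_w\w,z)\ge c$ for all $w\in\Z^d$, $z\in\range\setminus\rangez$. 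On this event, for \emph{any} admissible path $x_{0:n}$ from $x$ to $y$,
\[
\sum_{k=0}^{n-1}V(T_{x_k}\w,z_{k+1})\ \ge\ c\,N(x_{0:n}),
\]
where $N(x_{0:n})=\#\{1\le k\le n:\,z_k\notin\rangez\}$.

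The second step is the geometric bound $N(x_{0:n})\le C_0\abs{y-x}_1$, uniform over all admissible $x$-to-$y$ paths. Here I would use that $\rangez=\range\cap\Usetz$ with $\Usetz$ a face of the polytope $\Uset=\co\range$ containing $0$ in its relative interior: such a face is exposed, so there is $\hat u\in\R^d$ with $\hat u\cdot v\le0$ on $\Uset$ and $\Uset\cap\{\hat u\cdot v=0\}=\Usetz$; in particular $\hat u\cdot z=0$ for $z\in\rangez$, and $\delta:=-\max_{z\in\range\setminus\rangez}\hat u\cdot z>0$ because $\range$ is finite. (If $\rangez=\range$, i.e.\ $0\in\ri\Uset$, then $N\equiv0$ and the bound is trivial; if $0\notin\Uset$ then $\rangez=\varnothing$ and $\hat u\cdot z\le-\delta$ for \emph{every} $z\in\range$; both extremes are handled by the same argument.) Summing $\hat u\cdot z_k$ over an admissible path from $x$ to $y$ gives
\[
-\delta\,N(x_{0:n})\ \ge\ \sum_{k=1}^n\hat u\cdot z_k\ =\ \hat u\cdot(y-x)\ \ge\ -\abs{\hat u}_\infty\abs{y-x}_1,
\]
hence $N(x_{0:n})\le\delta^{-1}\abs{\hat u}_\infty\abs{y-x}_1=:C_0\abs{y-x}_1$.

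The third step assembles these: on $\{\tau_y<\infty\}$, applying the two displays with $n=\tau_y$ yields $\exp\{-\sum_{k=0}^{\tau_y-1}V(T_{X_k}\w,Z_{k+1})\}\le e^{-cN}\le e^{\abs{c}C_0\abs{y-x}_1}$, so
\[
E_x\Bigl[\exp\Bigl\{-\sum_{k=0}^{\tau_y-1}V(T_{X_k}\w,Z_{k+1})\Bigr\}\one_{\{\tau_y<\infty\}}\Bigr]\le e^{\abs{c}C_0\abs{y-x}_1}\,P_x(\tau_y<\infty)\le e^{\abs{c}C_0\abs{y-x}_1},
\]
and taking $-\log$ gives $a(x,y)\ge-\abs{c}C_0\abs{y-x}_1$; the case $y=x$ is trivial. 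Thus $C=\abs{c}C_0$ works. I do not expect a real obstacle here: the only content beyond bookkeeping is the observation that every step outside $\rangez$ has a strictly negative inner product with the fixed vector $\hat u$ adapted to the face $\Usetz$, so the fixed net displacement $y-x$ caps the number of such steps; the mild care points are (a) quoting the convex-analytic existence of $\hat u$ from the polytope facts already recalled in the paper and (b) promoting the pointwise bounds on $V$ to a statement holding simultaneously for all shifts via $T$-invariance of $\P$.
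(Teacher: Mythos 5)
Your proof is correct and follows essentially the same route as the paper: bound the potential along any admissible path below by $c$ times the number of steps outside $\rangez$, then cap that count by a constant multiple of $\abs{y-x}_1$ using a vector $\hat u$ that vanishes on $\rangez$ and is strictly signed on $\range\setminus\rangez$. The only difference is cosmetic: the paper produces $\hat u$ via its appendix Lemma \ref{lm:aux1111} (separating $\spn(\rangez)$ from $\co(\range\setminus\rangez)$), whereas you invoke exposedness of the face $\Usetz$ of the polytope $\Uset$ — a correct standard fact, though not one recalled in the paper, so it would need an explicit citation or a short separation argument.
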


\begin{proof}
Fix $x,y\in\Z^d$ with $y-x\in\gplus$. Let $N(n)$ be the number of steps $z\in \range\setminus \rangez$ the random walk starting at $x$ took in its first $n$ steps, i.e.~$N(n)=\sum_{k=0}^{n-1}\one\{Z_{k+1}\in\range\setminus\rangez\}$. 
Then 
	\begin{equation}\label{aux00001}
	a(x,y)\ge -\log E_x\left[e^{-cN(\tau_y)}\one\lbrace \tau_y <\infty \rbrace \right].
	\end{equation}
	By Lemma \ref{lm:aux1111} we can find $\delta>0$ and $\widehat{u}\in \R^d$ such that $z\cdot\widehat{u}\ge \delta$ for all $z\in \range\setminus\rangez$ and $z\cdot\widehat{u}=0$ for $z\in \rangez.$
	If $y-x=\sum_{{z}\in \range}\gamma_{z}z$ for some $\gamma_{z}\in \Z_+,$ then $(y-x)\cdot \widehat{u}\ge \delta \sum_{{z}\in \range\setminus \rangez}\gamma_{z}$ . This implies $N(\tau_y)\le (y-x)\cdot\widehat{u}/\delta$. Then \eqref{aux00001} implies $a(x,y)\ge -\delta^{-1}\abs{c}(y-x)\cdot\widehat{u}$, which implies the claim.
\end{proof}


\begin{lemma}\label{alphatil-lb2}
	Assume the setting of Example \ref{ex:product}. Assume also that $0\not \in \Uset$, $V$ is local, $\P$ has a finite range of dependence, and $V^-\in L^p$ for some $p>d.$ 
	Then there exists a deterministic finite positive constant $C$ such that $\P$-almost surely and for all $x\in\gplus$
		\[\varliminf_{n\to\infty}n^{-1}a(0,nx)\ge-C\abs{x}_1.\]
\end{lemma}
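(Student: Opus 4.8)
The plan is to reduce the lower bound to the greedy lattice animal theorem of Cox--Gandolfi--Griffin--Kesten and Martin. Since $0\notin\Uset$, Example~\ref{ex:dir-iid} (equivalently Lemma~\ref{lm:aux1111}) supplies $\widehat u\in\Z^d$ and $\delta>0$ with $z\cdot\widehat u\ge\delta$ for every $z\in\range$. Thus along any admissible path from $0$ to $nx$ the coordinate $y\cdot\widehat u$ increases by at least $\delta$ at each step, so on the event $\{\tau_{nx}<\infty\}$ the walk reaches $nx$ in at most $L_n:=n(x\cdot\widehat u)/\delta\le C_1 n\abs{x}_1$ steps, where $C_1$ depends only on $\widehat u$ and $\delta$. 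Recalling \eqref{a-def} and using $-V\le V^-$ pointwise,
\[
a(0,nx)\;\ge\;-\log E_0\Bigl[\exp\Bigl\{\sum_{k=0}^{\tau_{nx}-1}V^-(T_{X_k}\w,Z_{k+1})\Bigr\}\,\one\{\tau_{nx}<\infty\}\Bigr],
\]
so it suffices to bound this expectation by $e^{Cn\abs{x}_1}$ for all large $n$.

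First I would set $W(y)=\max_{z\in\range}V^-(T_y\w,z)$. Since $V^-\in L^p$ with $p>d$ we have $\E[W^d]<\infty$, and since $V$ is local and $\P$ has a finite range of dependence, $\{W(y):y\in\Z^d\}$ is a shift-invariant field with a finite range of dependence. For any trajectory with $\tau_{nx}<\infty$, replace the (possibly long-range) vertex set $\{X_0,\dots,X_{\tau_{nx}}\}$ by the nearest-neighbor connected set $A\ni 0$ obtained by filling in each increment along coordinate axes; then $\{X_0,\dots,X_{\tau_{nx}}\}\subseteq A$, $\abs{A}\le C_2\tau_{nx}\le C_2 L_n$ with $C_2$ depending only on $\range$, and, because $W\ge 0$,
\[
\sum_{k=0}^{\tau_{nx}-1}V^-(T_{X_k}\w,Z_{k+1})\;\le\;\sum_{k=0}^{\tau_{nx}-1}W(X_k)\;\le\;\sum_{v\in A}W(v).
\]

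The key step, and the only non-routine one, is the greedy lattice animal bound: because $\E[W^d]<\infty$, there is a deterministic constant $C_3<\infty$ such that, $\P$-almost surely, $\max\{\sum_{v\in A'}W(v):A'\ni 0\text{ connected},\ \abs{A'}\le m\}\le C_3\,m$ for all $m$ large. (This is the classical statement for i.i.d.\ weights; the finite-range-dependent version needed here follows from it by a standard blocking argument.) Since the directedness forces $\abs{A}\ge\tau_{nx}\ge n\abs{x}_1/\max_{z\in\range}\abs{z}_1\to\infty$, this applies for all large $n$ and yields $\sum_{v\in A}W(v)\le C_3 C_2 L_n$. Hence, $\P$-almost surely and for all large $n$,
\[
E_0\Bigl[\exp\Bigl\{\sum_{k=0}^{\tau_{nx}-1}V^-(T_{X_k}\w,Z_{k+1})\Bigr\}\,\one\{\tau_{nx}<\infty\}\Bigr]\;\le\;e^{C_3 C_2 L_n}\,P_0(\tau_{nx}<\infty)\;\le\;e^{Cn\abs{x}_1},
\]
with $C=C_1 C_2 C_3$, so $a(0,nx)\ge -Cn\abs{x}_1$ for all large $n$ and therefore $\varliminf_{n\to\infty}n^{-1}a(0,nx)\ge -C\abs{x}_1$. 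The almost sure event produced by the greedy lattice animal bound does not depend on $x$, and $\gplus$ is countable, so the estimate holds simultaneously for every $x\in\gplus$ with one and the same deterministic $C$. The only substantive input is the greedy lattice animal bound, where the hypothesis $p>d$ enters through $\E[W^d]<\infty$; the role of strict directedness ($0\notin\Uset$) is precisely to make the path length $O(n)$, and everything else is bookkeeping.
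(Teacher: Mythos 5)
Your proposal is correct and follows essentially the same route as the paper: use $0\notin\Uset$ to force every admissible path from $0$ to $nx$ to have length $O(n\abs{x}_1)$, bound $-V$ by $h(\w)=\max_{z\in\range}V^-(\w,z)$, and control the maximal sum of $h$ along such paths by a greedy-lattice-animal-type estimate; the paper does exactly this, citing Lemma 3.1 of \cite{Ras-Sep-14}, which is the admissible-path version of the lattice animal bound (and already accommodates finite range of dependence), rather than filling in to a nearest-neighbor animal and blocking as you do.

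Two small corrections to your justification. First, the greedy lattice animal theorem is not known under $\E[W^d]<\infty$ alone (that condition is insufficient); the correct hypothesis is Martin's condition $\int_0^\infty\P(W\ge s)^{1/d}\,ds<\infty$, which is what the paper's bound uses and which your actual assumption $V^-\in L^p$, $p>d$, does supply via Chebyshev, so the proof stands once you cite the theorem in that form. Second, the inequality $\sum_k W(X_k)\le\sum_{v\in A}W(v)$ needs the visited vertices to be distinct; this is true here because each step strictly increases $X_k\cdot\widehat u$, but it should be said, since it is another place where $0\notin\Uset$ is used.
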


\begin{proof}
%
%
	Let $\delta>0$ and $\widehat{u}$ be such that $z\cdot\widehat{u}\ge \delta$ for all $z\in \range.$ Such a $\widehat{u}$ exists by the Separating Hyperplane Theorem. As was argued in the proof of  Lemma \ref{alphatil-lb1}, if $x\in\gplus$ and $x_{0:m}$ is an admissible path from $0$ to $nx$ then $m\le nx\cdot\widehat{u}/\delta\le C_2\abs{x}_1n$.
	Let $h(\w)=\max_{ z\in \range }V^-(\w,z)$ and write 
	\begin{align*}
	a(0,nx)
	&\ge -\max \Bigl\{ \sum_{i=0}^{m-1}h(T_{x_i}\w):x_0=0,x_m=nx, x_{i+1}-x_i\in \range, m\le C_2\abs{x}_1n  \Bigr\} \\
	&\ge -\max \Bigl\{ \sum_{i=0}^{C_2\abs{x}_1n-1}h(T_{x_i}\w):x_0=0,x_{i+1}-x_i\in \range\Bigr\}.
	\end{align*}
	By \cite[Lemma 3.1]{Ras-Sep-14} we see that $\P$-almost surely,
	\begin{align*}
	\varliminf_{n\to\infty}n^{-1}a(0,nx)
	&\ge-\varlimsup_{n\to \infty}n^{-1}\max \Bigl\{ \sum_{i=0}^{C_2\abs{x}_1n-1}h(T_{x_i}\w):x_0=0,x_{i+1}-x_i\in \range\Bigr\}\\
	&\ge -C_3\abs{x}_1\int_0^\infty P(h\ge s)^{1/d} ds \ge -C_3\abs{x}_1\Bigl(\E\left[\abs{h}^p\right]^{1/d}\int_1^\infty \dfrac{ds}{s^{p/d}}+\int_0^1 ds \Bigr),
	\end{align*}
as desired.
\end{proof}

Now we can prove our first main theorem.

\begin{proof}[Proof of Theorem \ref{shapetheorem}]
Lemmas \ref{alphatil-lb1} and \ref{alphatil-lb2} show that conditions  \eqref{ass:V>c} and \eqref{ass:iid} in the statement of the theorem each imply that $\P$-almost surely and for all $x\in\gplus$, $\overline\alpha(x)>-C\abs{x}_1$.
The theorem then follows directly from Theorem \ref{shapetheorem.aux}. The only claim that needs a comment is the bound in part \eqref{alpha<Cxi}. Note that there are only finitely many possible sigma algebras that appear in the conditional expectation in \eqref{atil-bded} as we vary $\xi$ over $\cplus$. First, we appeal to \eqref{atil-bded} and sum over the finitely many possible sigma algebras $\kS_\xi$ and sets $\range_\xi$ in that expression to obtain an upper bound that is uniform over $\xi \in \cplus'\cap \cA$. Note that the random constant in this upper bound is integrable. The reverse inequality for $\xi \in \gplus \cap \cA$ comes from the lower bounds in Lemmas \ref{alphatil-lb1} and \ref{alphatil-lb2}. We then extend to $\xi \in \cplus \cap \cA$ using the limit in \eqref{lima=atil}, homogeneity, and the fact that $\alpha$ is a continuous extension of $\tilde\alpha$.  

The proofs of the claims for $a_\infty$ are essentially identical once one substitutes in the appropriate definitions and thus are omitted.
\end{proof}

We close the section with the proof of the point-to-point limit.

\begin{proof}[Proof of Corollary \ref{cor:shape}]
We work with the case of $a$, the case of $a_\infty$ being identical.

%
%
If $\xi\ne0$ then $x_n/|x_n|_1\to\xi/|\xi|_1$ as $n\to\infty$ and since $\xi\in\ri\cA$ so is $\xi/|\xi_1|$ and
therefore there exists a $\delta>0$ such that $x_n/|x_n|_1\in\cA_\delta$ for $n$ large enough.

If, on the other hand, $\xi=0$ (and is in $\ri\cA$) then there exists an $\e>0$ such that $\{\zeta\in\cA:|\zeta|_1\le\e\}\subset\ri\cA$.
But then for any $\eta\in\cA$ with $|\eta|_1=1$, $\e\eta\in\ri\cA$ and hence $\eta\in\ri\cA$. 
By compactness of the unit $\ell_1$-ball in $\R^d$ we have that $\{\eta\in\cA:|\eta|_1\le1\}\subset\ri\cA_\delta$ for some $\delta>0$. Thus, $x_n/|x_n|_1\in\cA_\delta$ for all $n$.

Now, whether $\xi=0$ or not, if $|x_n|_1\to\infty$ the shape theorem \ref{shapetheorem} implies that 
$|x_n|^{-1}_1\bigl(a(0,x_n)-\alpha(x_n)\bigr)\to0$ as $n\to\infty$. Since $|x_n|_1/n\to|\xi|_1$, we see that
	 \begin{align}\label{aux000}
	 \lim_{n\to\infty}\dfrac{a(0,x_n)-\alpha(x_n)}{n} =0.
	 \end{align}
In the case when $|x_n|_1$ is bounded, we have that for any subsequence along which $x_n\ne0$, 
$|x_n|^{-1}\bigl(a(0,x_n)-\alpha(x_n)\bigr)$ is bounded and $|x_n|_1/n\to0$, as $n\to\infty$. This and the fact that 
$a(0,0)=\alpha(0)=0$ imply that \eqref{aux000} still holds.\eqref{p2plim} follows from \eqref{aux000}
by writing $\alpha(x_n)/n=\alpha(x_n/n)$ and using the continuity of $\alpha$.
\end{proof}

\section{The restricted-length polymer}\label{sec:restricted}
Before we can prove Theorem \ref{VarForm} we need a detour into restricted-length random polymers. For $n\in \N$ let 
\begin{equation}\label{def:Dn}
D_n = \Bigl\{ x\in\gplus:\exists b_z\in\Z_+,\,z\in\range, \text{ with } \sum_{{z}\in \range}b_z=n \text{ and } x=\sum_{{z}\in \range}b_z z \Bigr\}.
\end{equation}
For $y-x\in D_n$ let
\[G_{x,(n),y} = \log E_x\Bigl[  \exp\Bigl\{ - \sum_{k=0}^{n-1}  V(T_{X_k}\w,Z_{k+1})\Bigr\} \one_{\lbrace X_n=y \rbrace} \Bigr]\quad\text{and}\quad
G^\infty_{x,(n),y}=\max_{\substack{x_{0:n}:x_0=x,\\\ \ \ \ \ x_n=y}}\Bigl\{-\sum_{k=0}^{n-1}V(T_{x_k}\w,z_{k+1})\Bigr\}.\]

The following theorem follows from \cite[Theorem 2.2]{Ras-Sep-14} and  \cite[Theorem 2.4]{Geo-Ras-Sep-16}.

\begin{theorem}\label{thm:p2p}
	Assume $V^+(\w,z)\in \cL_{z,\range}$ for all $z\in\range\setminus\{0\}$ and $V^+(\w,0)\in\cL_{\hat z,\range}$ for some $\hat z\in\range\setminus\{0\}$. Assume $\P$ is ergodic under the group of shifts $\{T_x:x\in\cG\}$.
Then the following hold.
	 \begin{enumerate}[label={\rm(\roman*)}, ref={\rm\roman*}] \itemsep=3pt 
	\item\label{p2p}For $\P$-almost every $\w$ and simultaneously for all $\xi\in \Uset$ the limits
	\begin{equation}\label{eq:p2p}
	\Lapp(\w,\xi)=\lim_{n\to \infty} \frac{G_{0,(n),\tilde x_{n}(\xi)}}{n}\quad\text{and}\quad
	\gpp(\w,\xi)=\lim_{n\to \infty} \frac{G^\infty_{0,(n),\tilde x_{n}(\xi)}}{n}
	\end{equation}
	exist in $(-\infty,\infty]$. Here, $\tilde x_n(\xi)\in D_n$ is defined in  \cite[Equation (2.1)]{Ras-Sep-14}. It satisfies $\tilde x_n(\xi)/n\to\xi$ as $n\to\infty$. 
	\item\label{p2l} Fix $h\in\R^d$. For $\P$-almost every $\w$ the limits
    \begin{equation}\label{eq:p2l}
	\Lapl(\w,h)=\lim_{n\to \infty} \frac{1}{n}\log\sum_{x\in D_n}e^{G_{0,(n),x}+h\cdot x}\quad\text{and}\quad
    \gpl(\w,h)=\lim_{n\to \infty} \frac{1}{n}\max_{x\in D_n}\{G^\infty_{0,(n),x}+h\cdot x\} 
    \end{equation}
	exist in $(-\infty,\infty]$ and satisfy
	\begin{align*}
    \Lapl(h)=\sup_{\xi\in\Uset}\{\Lapp(\xi)+h\cdot\xi\}
    \quad\text{and}\quad
    \gpl(h)=\sup_{\xi\in\Uset}\{\gpp(\xi)+h\cdot\xi\}.
	\end{align*}
	\end{enumerate}
\end{theorem}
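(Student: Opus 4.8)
The two formulas we must establish are exactly the point-to-point and point-to-level limits for the restricted-length polymer, which are the content of \cite[Theorem 2.2]{Ras-Sep-14} (for the free-energy/positive-temperature case $\Lapp$, $\Lapl$) and \cite[Theorem 2.4]{Geo-Ras-Sep-16} (for the zero-temperature/last-passage case $\gpp$, $\gpl$). So the plan is not to reprove these from scratch but to verify that our hypotheses match the hypotheses of those theorems and then quote them. First I would recall the precise setup of \cite{Ras-Sep-14}: there one fixes a finite step set $\range\subset\Z^d$, an ergodic $\Z^d$-stationary environment, and a potential $V$ on $\Omega\times\range$ satisfying a moment/mixing hypothesis which is precisely captured by the class $\cL_{z,\range}$ condition (their Theorem 2.2 is stated under the assumption that $V^+$ lies in the relevant class for each admissible step, together with a separate mild condition handling the step $0$ via some $\hat z$, which is exactly our hypothesis $V^+(\w,0)\in\cL_{\hat z,\range}$). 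Since we have assumed $\P$ is ergodic under $\{T_x:x\in\cG\}$ and that $V^+(\w,z)\in\cL_{z,\range}$ for all $z\in\range\setminus\{0\}$ plus $V^+(\w,0)\in\cL_{\hat z,\range}$, the hypotheses of \cite[Theorem 2.2]{Ras-Sep-14} are met verbatim, which yields part \eqref{p2p} for $\Lapp$ and the existence and variational identity in part \eqref{p2l} for $\Lapl$, with $\tilde x_n(\xi)$ the sequence constructed in \cite[Equation (2.1)]{Ras-Sep-14} (whose defining property $\tilde x_n(\xi)/n\to\xi$ and $\tilde x_n(\xi)\in D_n$ is recorded there).

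For the zero-temperature statements $\gpp$ and $\gpl$ I would appeal to \cite[Theorem 2.4]{Geo-Ras-Sep-16}, which proves the last-passage analogue under the same structural assumptions; the only point worth a sentence is that the min/max in \eqref{a-FPP}-type quantities replaces the $\log E$ expectation, and the subadditive/superadditive arguments go through identically because the class $\cL_{z,\range}$ controls exactly the boundary-error terms that arise in passing between $D_n$-endpoints. The variational identity $\gpl(h)=\sup_{\xi\in\Uset}\{\gpp(\xi)+h\cdot\xi\}$ is the zero-temperature Legendre duality between point-to-point and point-to-level free energies, proved in \cite{Geo-Ras-Sep-16} by the standard argument: the $\le$ direction is immediate from $\max_{x\in D_n}\{G^\infty_{0,(n),x}+h\cdot x\}\ge G^\infty_{0,(n),\tilde x_n(\xi)}+h\cdot\tilde x_n(\xi)$ together with $\tilde x_n(\xi)/n\to\xi$, while the $\ge$ direction uses that $D_n$ has polynomially many points (at most $(n+1)^{|\range|}$), so the max over $x\in D_n$ is, up to an $o(n)$ error, the same as the max of the per-site limits, and one covers $\Uset$ by finitely many near-$\xi$ directions using the uniform (in $\xi$) control the class condition provides. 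The positive-temperature version is the same with $\log\sum$ in place of $\max$, and the polynomial bound on $|D_n|$ makes $\frac1n\log|D_n|\to0$ so the sum and the max have the same exponential rate.

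The only genuine content here beyond citation is checking the translation of hypotheses, and in particular that our two-part condition on $V^+$ (via $\cL_{z,\range}$ for $z\ne0$ and via $\cL_{\hat z,\range}$ for $z=0$) is precisely the hypothesis used in \cite{Ras-Sep-14,Geo-Ras-Sep-16}; since those references state their results in exactly this form, there is nothing further to prove. I expect the only mild subtlety — and hence the "main obstacle," such as it is — to be making sure the indexing sequence $\tilde x_n(\xi)$ referenced in \eqref{eq:p2p} is the one from \cite[Equation (2.1)]{Ras-Sep-14} and that the limits are asserted to hold \emph{simultaneously for all $\xi\in\Uset$} on a single full-measure event, which is indeed part of the statement of \cite[Theorem 2.2]{Ras-Sep-14}; I would simply cite that uniformity explicitly rather than re-deriving it. Thus the proof reduces to: (i) invoke \cite[Theorem 2.2]{Ras-Sep-14} for \eqref{p2p} and the $\Lapp,\Lapl$ parts of \eqref{p2l}; (ii) invoke \cite[Theorem 2.4]{Geo-Ras-Sep-16} for the $\gpp,\gpl$ parts; (iii) note that the variational identities in \eqref{p2l} are the duality statements proved in those same references.
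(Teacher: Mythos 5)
Your overall route is the same as the paper's: Theorem \ref{thm:p2p} is not reproved but obtained by citing \cite[Theorem 2.2]{Ras-Sep-14} for $\Lapp,\Lapl$ and \cite[Theorem 2.4]{Geo-Ras-Sep-16} for $\gpp,\gpl$. However, the one place where your proposal asserts there is ``nothing further to prove'' is exactly the place where something does need to be checked, and your claim there is wrong. You state that the hypotheses of \cite[Theorem 2.2]{Ras-Sep-14} are met verbatim, i.e.\ that the cited result is stated under the condition $V^+(\w,z)\in\cL_{z,\range}$ for $z\in\range\setminus\{0\}$ together with $V^+(\w,0)\in\cL_{\hat z,\range}$ for some $\hat z$. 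It is not: \cite[Theorem 2.2]{Ras-Sep-14} is stated under the stronger assumption that $\max_{z\in\range}\abs{V(\w,z)}\in\cL_{\bar z,\range}$ for each $\bar z\in\range\setminus\{0\}$, i.e.\ it also requires control of the negative part of $V$, which the present theorem does not assume. So the citation, as you invoke it, does not apply to the stated hypotheses, and the gap between ``$V^+$ only'' and ``$\abs{V}$'' is precisely the content that the paper supplies in the remark following the theorem: one must go into the proof of \cite[Theorem 2.2]{Ras-Sep-14} and verify that the class-$\cL$ condition is used only to bound the boundary/error terms from above, so that membership of $V^+$ (together with the separate condition on $V^+(\w,0)$ through some $\hat z$) suffices. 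Your proposal skips this verification, and without it the deduction is not justified.

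This is a fixable but genuine omission rather than a cosmetic one, because the whole point of stating Theorem \ref{thm:p2p} under $V^+$-only hypotheses is that the downstream results (Theorems \ref{shapetheorem} and \ref{VarForm}) are proved without any integrability assumption on $V^-$ beyond what conditions \eqref{ass:V>c} or \eqref{ass:iid} give; if one simply imported \cite[Theorem 2.2]{Ras-Sep-14} with its stated hypotheses, one would have to add an assumption on $V^-$ that the paper deliberately avoids. The remaining parts of your sketch (the zero-temperature analogue via \cite[Theorem 2.4]{Geo-Ras-Sep-16}, and the Legendre duality between point-to-point and point-to-level quantities using $\abs{D_n}\le (n+1)^{\abs{\range}}$) are consistent with what those references prove and with how the paper uses them, so no further comment is needed there.
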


\begin{remark}
\cite[Theorem 2.2]{Ras-Sep-14} requires that $\max_{z\in\range}\abs{V(\w,z)}\in\cL_{\bar z,\range}$ for each $\bar z\in\range\setminus\{0\}$, but examining the proof shows that it is in fact enough to assume that $V^+(\w,z)\in\cL_{z,\range}$ for all $z\in\range\setminus\{0\}$ and $V^+(\w,0)\in\cL_{\hat z,\range}$ for some $\hat z\in\range\setminus\{0\}$.
\end{remark}

\begin{remark}
	 Under the conditions of Theorem \ref{shapetheorem}, $\Lapp$ and $\gpp$ are finite. 
	 See \cite[Remark 2.3]{Ras-Sep-14} and \cite[Remarks 2.5 and 2.6]{Geo-Ras-Sep-16}.
\end{remark}

In fact, a shape theorem similar to the one in Theorem \ref{shapetheorem} holds for $G_{0,(n),x}$ and $G^\infty_{0,(n),x}$.
Given $\delta>0$ and a face $\Usetgen$ of $\Uset$ let \[\Usetgen_\delta=\bigl\lbrace \xi \in \Usetgen:\text{dist}\bigl(\xi,\Usetgen\setminus\ri\Uset\bigr)\ge\delta \bigr\rbrace \qquad \text{ and } \qquad \rangegen=\Usetgen\cap \range.\]
 

\begin{theorem}\label{th:Gshape}
	Fix a face $\Usetgen$ of $\Uset$ that is not a singleton. Assume $V^+(\w,z)\in \cL_{z,\rangegen}$ for each $z\in\rangegen\setminus\{0\}$ and $V^+(\w,0)\in\cL_{\hat z,\rangegen}$ for some $\hat z\in\rangegen\setminus\{0\}$.
	Assume $\P$ is ergodic under $\{T_x:x\in\cG(\rangegen)\}$.  Then for any $\delta>0$ we have $\P$-almost surely
	\begin{equation}\label{eq:Gshape}
	\varlimsup_{n\to \infty}\max_{x\in n\Usetgen_\delta\cap  D_n} \frac{\bigl|G_{0,(n),x}-n\Lapp\bigl(\frac{x}{n}\bigr)\bigr|}{n}= 0\quad\text{and}\quad
	\varlimsup_{n\to \infty}\max_{x\in n\Usetgen_\delta\cap  D_n} \frac{\bigl|G^\infty_{0,(n),x}-n\gpp\bigl(\frac{x}{n}\bigr)\bigr|}{n}= 0.
	\end{equation}
	Consequently, Theorem \ref{thm:p2p}\eqref{p2p} holds with $\tilde x_n(\xi)$ replaced by any sequence $x_n\in D_n$ satisfying $x_n/n\to\xi$, on a single event of full probability. 
\end{theorem}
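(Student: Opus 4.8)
The plan is to imitate the proof of Theorem~\ref{shapetheorem.aux}, with the subadditivity of $a$ replaced by the \emph{super}additivity of $G$ and $G^\infty$ and, since paths now carry a prescribed number of steps, with path lengths carried through every estimate; the pointwise input is the limit along $\tilde x_n(\xi)$ from Theorem~\ref{thm:p2p}\eqref{p2p} in place of the law of large numbers in rational directions. A preliminary reduction passes to the sublattice $\cG(\rangegen)$, which by \cite[P1, p.~65]{Spi-76} is isomorphic to some $\Z^k$: on it the steps $\rangegen$, the potential $V|_{\Omega\times\rangegen}$ and the face $\Usetgen$ (full-dimensional after the isomorphism) satisfy the hypotheses of Theorem~\ref{thm:p2p} with $\P$ ergodic under the full shift group, and by \cite[Theorem~18.1]{Roc-70} a point of $D_n$ lying in $n\Usetgen$ is automatically built from $\rangegen$-steps only, so $n\Usetgen_\delta\cap D_n$ is unaffected by the reduction. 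Thus, on one full-probability event, $\Lapp(\w,\cdot)$ and $\gpp(\w,\cdot)$ are defined on $\Usetgen$, are finite (cf.\ the Remark after Theorem~\ref{thm:p2p}) and concave, hence continuous on $\ri\Usetgen$, and $G_{0,(n),\tilde x_n(\xi)}/n\to\Lapp(\xi)$, $G^\infty_{0,(n),\tilde x_n(\xi)}/n\to\gpp(\xi)$. Two elementary facts are used repeatedly: by the Markov property $G_{0,(m+n),y}\ge G_{0,(m),x}+G_{x,(n),y}$ whenever $x\in D_m$ and $y-x\in D_n$, and concatenating maximizing paths gives the same for $G^\infty$; and for any admissible path $x_{0:k}$ from $x$ to $y$, $G_{x,(k),y}\ge -\sum_{j=0}^{k-1}V^+(T_{x_j}\w,z_{j+1})+k\min_{z\in\rangegen}\log p(z)$, with $G^\infty_{x,(k),y}\ge -\sum_{j=0}^{k-1}V^+(T_{x_j}\w,z_{j+1})$; note both involve only $V^+$.

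Following Theorem~\ref{shapetheorem.aux}, suppose toward a contradiction that with positive probability there are $\delta>0$, $\e>0$ and $x_n\in n\Usetgen_\delta\cap D_n$ with $n\to\infty$ along a subsequence and $|G_{0,(n),x_n}-n\Lapp(x_n/n)|/n\ge\e$. Intersect with the event above and with the one on which \eqref{def:cL} holds for $g=V^+(\cdot,z)$, $z\in\rangegen\setminus\{0\}$, and for $g=V^+(\cdot,0)$ along $\hat z$. Passing to a further subsequence, $x_n/n\to\xi\in\Usetgen_\delta$, and writing $x_n=\sum_{z\in\rangegen}b_{n,z}z$ with $b_{n,z}\in\Z_+$, $\sum_z b_{n,z}=n$, also $b_{n,z}/n\to\gamma_z$, so $\gamma$ is a representation of $\xi$; by continuity of $\Lapp$ at $\xi$ it suffices to derive $G_{0,(n),x_n}/n\to\Lapp(\xi)$. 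Fix a small $\e_1>0$ and set $\underline y_n=\sum_z\fl{(1-\e_1)b_{n,z}}z$, $\overline y_n=\sum_z\ce{(1+\e_1)b_{n,z}}z$, of levels $\underline m_n=\sum_z\fl{(1-\e_1)b_{n,z}}\le n$ and $\overline m_n=\sum_z\ce{(1+\e_1)b_{n,z}}\ge n$. Then $\underline m_n/n\to 1-\e_1$, $\overline m_n/n\to 1+\e_1$, $\underline y_n/\underline m_n\to\xi$, $\overline y_n/\overline m_n\to\xi$, and the step multiplicities of $\underline y_n$ are dominated componentwise by those of $x_n$, which in turn are dominated by those of $\overline y_n$, so $x_n-\underline y_n\in D_{n-\underline m_n}$ and $\overline y_n-x_n\in D_{\overline m_n-n}$. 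Since $\underline y_n,\overline y_n$ approximate $\xi$ with representations converging to $\gamma$, using that the point-to-point free energy is independent of the approximating sequence one gets $G_{0,(\underline m_n),\underline y_n}/\underline m_n\to\Lapp(\xi)$ and $G_{0,(\overline m_n),\overline y_n}/\overline m_n\to\Lapp(\xi)$, and superadditivity gives
\[G_{0,(\underline m_n),\underline y_n}+G_{\underline y_n,(n-\underline m_n),x_n}\ \le\ G_{0,(n),x_n}\ \le\ G_{0,(\overline m_n),\overline y_n}-G_{x_n,(\overline m_n-n),\overline y_n}.\]

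The two error terms are partition functions over paths of length $O(\e_1 n)$. Bounding $G_{\underline y_n,(n-\underline m_n),x_n}$ (and symmetrically $G_{x_n,(\overline m_n-n),\overline y_n}$) from below by the elementary estimate along a path that takes its $\hat z$-steps and a bounded multiple as many $0$-steps first in alternating blocks, then all its $z$-steps consecutively for each remaining $z$, every vertex of which lies in $\gplus(\rangegen)$ at $\ell^1$-distance $O(n)$ from the origin, condition \eqref{def:cL} (invoked for $z\ne0$ with $g=V^+(\cdot,z)$ and for the $0$-steps along $\hat z$ with $g=V^+(\cdot,0)$, exactly as in Theorem~\ref{shapetheorem.aux}) makes the $V^+$-contribution at most $\e_2 n+o(n)$ for any prescribed $\e_2>0$ once $\e_1$ is small, while the $\log p$ term is $O(\e_1 n)$. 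Dividing by $n$, letting $n\to\infty$ and then $\e_1,\e_2\to0$ (with $\e_1$ small relative to $\e_2$), and using continuity of $\Lapp$ at $\xi\in\ri\Usetgen$, both sides converge to $\Lapp(\xi)$, contradicting the assumed separation; this proves \eqref{eq:Gshape}. The argument for $G^\infty$ is verbatim the same. For the last assertion, if $\xi\in\ri\Usetgen$ and $x_n\in D_n$ with $x_n/n\to\xi$, a neighbourhood of $\xi$ in $\Usetgen$ lies in $\ri\Usetgen$, so $x_n/n\in\Usetgen_\delta$ for some $\delta>0$ and all large $n$, and \eqref{eq:Gshape} plus continuity of $\Lapp$ give $G_{0,(n),x_n}/n\to\Lapp(\xi)$ on the single full-probability event, likewise for $G^\infty$.

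The main obstacle is the step-multiplicity bookkeeping the prescribed path length forces on the argument: because the connecting segments must be sublinear in length for \eqref{def:cL} to control them, the sandwiching endpoints must be built from the \emph{limiting} representation $\gamma$ of $x_n$ rather than from a fixed construction of $\tilde x_m(\xi)$, and the resulting identification of the limits with $\Lapp(\xi)$ and $\gpp(\xi)$---that is, independence of the point-to-point free energy from the approximating sequence---is the one nonroutine point beyond transcribing Theorem~\ref{shapetheorem.aux}; it can be obtained from a short superadditivity argument comparing two approximating sequences at a common doubled level, together with Theorem~\ref{thm:p2p}\eqref{p2p}. For the upper half of \eqref{eq:Gshape} this can be bypassed through the point-to-level duality of Theorem~\ref{thm:p2p}\eqref{p2l}: $G_{0,(n),x}\le\log\sum_{x'\in D_n}e^{G_{0,(n),x'}+h\cdot x'}-h\cdot x$ for every $h$, the normalized right side converges in $h$ locally uniformly (by convexity) to $\Lapl(h)-h\cdot(x/n)$, and $\inf_{|h|\le M}(\Lapl(h)-h\cdot\zeta)=\Lapp(\zeta)$ uniformly over $\zeta\in\Usetgen_\delta$ once $M=M(\delta)$ is large, which gives the upper bound with no reference to $\tilde x_n$ and leaves only the genuinely harder lower bound to the sandwiching above; the same works for $G^\infty$ via $\gpl$.
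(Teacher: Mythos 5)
You have put your finger on the right difficulty but not resolved it, and your construction also introduces a second problem that the paper's construction is specifically designed to avoid. First, the circularity: your sandwiching endpoints $\underline y_n=\sum_z\fl{(1-\e_1)b_{n,z}}z$ and $\overline y_n=\sum_z\ce{(1+\e_1)b_{n,z}}z$ are built from the $n$-dependent multiplicities of $x_n$ itself, so they are merely two further sequences in $D_{\underline m_n}$, $D_{\overline m_n}$ whose directions converge to $\xi$. The assertion that $G_{0,(\underline m_n),\underline y_n}/\underline m_n\to\Lapp(\xi)$ (``independence of the point-to-point free energy from the approximating sequence'') is exactly the conclusion of the theorem you are proving; Theorem \ref{thm:p2p}\eqref{p2p} only gives the limit along the particular sequence $\tilde x_n(\xi)$. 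The patch you sketch does not close the gap: superadditivity at a doubled level yields $G_{0,(2n),x_n+x_n'}\ge G_{0,(n),x_n}+G_{x_n,(n),x_n+x_n'}$, and the second term is $G_{0,(n),x_n'}$ evaluated in the shifted environment $T_{x_n}\w$; since $x_n$ changes with $n$, there is no pointwise way to compare this with the unshifted quantity, and making such comparisons uniform over base points is precisely the content of the shape theorem. The paper's proof avoids the circle by sandwiching with exact integer multiples $\overline k_n\bigl(\overline r_m\overline\zeta_m\bigr)$ and $\underline k_n\bigl(\underline r_m\underline\zeta_m\bigr)$ of \emph{fixed} lattice points built from the limiting coefficients $\gamma_z$ inflated, respectively deflated, by $\e_1$, so that the known limit \eqref{eq:p2p} applies directly to the endpoint terms; the parameters $m\to\infty$, $\e_1\to0$ are then removed using continuity of $\Lapp$ on $\ri\Usetgen$.

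Second, your proportional inflation breaks the error-term estimate. In the connecting segment from $x_n$ to $\overline y_n$ the number of $\hat z$-steps is $\ce{(1+\e_1)b_{n,\hat z}}-b_{n,\hat z}$, which is $0$ whenever $b_{n,\hat z}=0$ (this happens, e.g., when $\gamma_{\hat z}=0$), while the number of zero steps can be of order $\e_1\gamma_0 n$ when $\gamma_0>0$. Then no rearrangement produces ``a bounded multiple as many $0$-steps'' per $\hat z$-step, and the hypothesis $V^+(\cdot,0)\in\cL_{\hat z,\rangegen}$, which only controls sums of $V^+(\cdot,0)$ along $\hat z$-rays, cannot bound the zero-step contribution: condition \eqref{def:cL} gives at best that single-site values are $o(n)$ uniformly over the relevant box, which is not enough against order $\e_1 n$ repetitions at few sites. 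This is exactly why the paper inflates every coordinate additively, via $\ce{m(\gamma_z+\e_1)}$, which guarantees at least order $\e_1 n$ many $\hat z$-steps in the upper connector regardless of whether $\gamma_{\hat z}=0$. Your duality route for the upper half of \eqref{eq:Gshape} is a reasonable alternative idea (modulo justifying the uniform inversion of the duality on $\Usetgen_\delta$ after the reduction to the restricted step set), but it only addresses the upper bound, and both gaps above sit in the lower bound, so the proof as proposed is incomplete.
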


\begin{remark}
The point of using the sets $\Usetgen_\delta$ is to stay uniformly away from the places where $\Lapp$ may not be continuous.  Theorem 3.2 in \cite{Ras-Sep-14} gives conditions under which $\Lapp$ is continuous up to the boundary. The same result should hold for $\gpp$. In these cases, one can strengthen the above shape theorem and include some or all of the boundary. For example, if $\P$ is i.i.d., $V$ is local, $V\in L^p$ with $p>d$, and $0\not\in \Uset$ then the shape theorem holds on all of $\Uset$. The same holds if $V$ is bounded above and $0\in\ri\Uset$.  When $0$ is on the relative boundary of $\Uset$ the shape theorem holds if one stays uniformly away from $\Uset_0$, the unique face of $\Uset$ that contains $0$ in its relative interior.
\end{remark}

We do not need Theorem \ref{th:Gshape} for our proofs of Theorems \ref{shapetheorem} and \ref{VarForm}. However, this shape theorem is of independent interest to the field. Hence, we give a proof of it in Appendix \ref{app:Gshape}.\medskip

The next lemma connects restricted-length and unrestricted-length quantities.

\begin{lemma}\label{-a>La}
Assume the setting of Theorem \ref{shapetheorem}. Assume also that $\P$ is ergodic under $\{T_x:x\in\cG\}$. Then with $\P$-probability one we have
for each $\xi\in\cC_+$ and each $s>0$ such that $\xi/s\in\Uset$, $s\Lapp(\xi/s)\le-\alpha(\xi)$ and $s\gpp(\xi/s)\le-\alpha_\infty(\xi)$.
\end{lemma}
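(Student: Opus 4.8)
The plan is to obtain, for each fixed endpoint $y$, the deterministic inequalities $G_{0,(n),y}\le -a(0,y)$ and $G^\infty_{0,(n),y}\le -a_\infty(0,y)$, which compare the restricted-length partition functions with the unrestricted (walk-with-killing) ones, and then to divide by $n$ and pass to the limit along $y=\tilde x_n(\xi/s)$: on the left Theorem~\ref{thm:p2p}\eqref{p2p} yields $\Lapp(\w,\xi/s)$, while on the right Corollary~\ref{cor:shape} yields $-\alpha(\xi/s)$. Homogeneity of $\alpha$ gives $\alpha(\xi/s)=s^{-1}\alpha(\xi)$, so multiplying by $s>0$ produces $s\Lapp(\w,\xi/s)\le-\alpha(\xi)$, and the argument for $\gpp$ and $\alpha_\infty$ is word for word the same with sums over paths replaced by maxima.

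For the pointwise bound I would argue by loop erasure at the endpoint. Work on the full-measure event on which \eqref{V>0} holds for each of the finitely many $z\in\rangez$, fix $y\in\gplus\setminus\{0\}$ and $n\ge1$, and let $x_{0:n}$ be an admissible path from $0$ to $y$. With $j=\min\{k\ge1:x_k=y\}\le n$, the prefix $x_{0:j}$ reaches $y$ for the first time at time $j$, and the suffix $x_{j:n}$ is an admissible loop at $y$, so by Lemma~\ref{lm:loops} every increment of $x_{j:n}$ lies in $\rangez$; hence \eqref{V>0} gives $\sum_{k=j}^{n-1}V(T_{x_k}\w,z_{k+1})\ge0$ and therefore $\exp\{-\sum_{k=0}^{n-1}V(T_{x_k}\w,z_{k+1})\}\le\exp\{-\sum_{k=0}^{j-1}V(T_{x_k}\w,z_{k+1})\}$. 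Summing the $P_0$-weights of all length-$n$ paths from $0$ to $y$, grouping them by the pair $(j,x_{0:j})$, and bounding the contribution of the loop suffix by $\sum_{x_{j:n}}P_y(X_{0:n-j}=x_{j:n})=P_y(X_{n-j}=y)\le1$, I obtain
\[
e^{G_{0,(n),y}}\le\sum_{j=1}^{n}E_0\Bigl[\exp\Bigl\{-\sum_{k=0}^{\tau_y-1}V(T_{X_k}\w,Z_{k+1})\Bigr\}\one\{\tau_y=j\}\Bigr]\le e^{-a(0,y)},
\]
and the same computation with the outer sum replaced by a maximum gives $G^\infty_{0,(n),y}\le-a_\infty(0,y)$.

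Finally I would fix $\xi\in\ri\cA$ and $s>0$ with $\xi/s\in\Uset$ and take $y_n=\tilde x_n(\xi/s)\in D_n$, so that $y_n/n\to\xi/s$. Since $\xi\in\ri\cA$ we have $\cA=\cC_{\xi/s}$, and this face is the convex cone generated by $\range\cap\cC_{\xi/s}=\range_{\xi/s}$ by \cite[Theorem 18.3]{Roc-70}; as $\tilde x_n(\xi/s)$ is built from the relevant steps $\range_{\xi/s}$, it lies in $\gplus(\range_{\xi/s})\subseteq\gplus\cap\cA$. Intersecting the full-measure events furnished by Theorem~\ref{thm:p2p}\eqref{p2p}, by Corollary~\ref{cor:shape}, and by the pointwise bound above, I divide $G_{0,(n),y_n}\le-a(0,y_n)$ by $n$ and let $n\to\infty$: the left-hand side converges to $\Lapp(\w,\xi/s)$ and the right-hand side to $-\alpha(\xi/s)=-s^{-1}\alpha(\xi)$, whence $s\Lapp(\w,\xi/s)\le-\alpha(\xi)$; the statement for $\gpp$ follows identically from $G^\infty_{0,(n),y_n}\le-a_\infty(0,y_n)$, Theorem~\ref{shapetheorem}\eqref{al_infty}, and the part of Corollary~\ref{cor:shape} for $\alpha_\infty$. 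The step I expect to require the most care is exactly this last bookkeeping: verifying that $\tilde x_n(\xi/s)$ stays in $\gplus\cap\cA$ so that Corollary~\ref{cor:shape} is applicable (which forces one to track the relevant step set $\range_{\xi/s}$ rather than all of $\range$), together with checking that the exceptional sets in Theorem~\ref{thm:p2p} and Corollary~\ref{cor:shape} can be chosen once and for all, uniformly in the direction $\xi$ and the scale $s$.
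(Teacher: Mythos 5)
Your proposal is correct and follows essentially the same route as the paper: the pointwise comparison $G_{0,(n),y}\le -a(0,y)$ (and its zero-temperature analogue), obtained by applying Lemma \ref{lm:loops} and \eqref{V>0} to the portion of the path after $\tau_y$, is exactly the paper's bound, and the passage to the limit via Theorem \ref{thm:p2p} together with the shape theorem/point-to-point limit is the same. The only cosmetic difference is that the paper works with path length $\lfloor ns\rfloor$ and direction $\xi$ so the limit gives $s\Lapp(\xi/s)$ and $\alpha(\xi)$ directly, while you work at length $n$ in direction $\xi/s$ and then use homogeneity of $\alpha$; both versions rely in the same way on $\tilde x_n(\xi/s)$ lying in the appropriate face so that the shape theorem applies.
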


\begin{proof}
We work with the case of $\Lapp$ and $\alpha$, the case of $\gpp$ and $\alpha_\infty$ being identical. 

Consider $\xi$ and $s$ as in the claim. 
Then on the event $\{X_{\fl{ns}}=\tilde x_{\fl{ns}}(\xi/s)\}$ we have
$\tau_{\tilde x_{\fl{ns}}(\xi/s)}\le\fl{ns}<\infty$ and Lemma \ref{lm:loops} tells us that $Z_k\in \rangez$ for $\tau_{\tilde x_{\fl{ns}}(\xi/s)}\le k<\fl{ns}$ if this set is non-empty. If $0\not\in\Uset$ then there is no such $k$ and if $0\in\Uset$ then 
condition \eqref{V>0} implies that  $V(T_{X_k}\w,Z_{k+1})\ge 0$ for all such $k$. Consequently,
		\begin{align*}
		&E_0\Bigl[  \exp\Bigl\{ - \sum_{k=0}^{\fl{ns}-1}  V(T_{X_k}\w,Z_{k+1})\Bigr\} \one{\{ X_{\fl{ns}}=\tilde x_{\fl{ns}}(\xi/s) \}} \Bigr]\\ 
		&\qquad\qquad\le E_0\biggl[  \exp\biggl\{ - \sum_{k=0}^{\tau_{\tilde x_{\fl{ns}}(\xi/s)}-1}  V(T_{X_k}\w,Z_{k+1})\biggr\} \one{\{\tau_{\tilde x_{\fl{ns}}(\xi/s)}<\infty\}} \biggr].
		\end{align*}
		Take a log, divide by $n$,  send $n\to\infty$, and apply Theorems \ref{thm:p2p} and \ref{shapetheorem} to conclude $s\Lapp(\xi/s)\le -\alpha(\xi)$.
\end{proof}

\begin{remark}
The above shows that $\alpha(\xi)\le-\sup_{s>0} \{s\Lapp(\xi/s)\}$. We believe that in fact
    \[\alpha(\xi)=-\sup_{s>0} \{s\Lapp(\xi/s)\}\]
and that the supremum is attained. A similar statement should hold for $\alpha_\infty$ and $\gpp$. We leave this to future work, 
as we do not need this for our results in this paper.
\end{remark}

We will need the following consequence of  \eqref{eq:p2l}.

\begin{theorem}\label{newshapetheorem3}
	Let $V^+(\w,z)\in\cL_{z,\range}$ for each $z\in\range\setminus\{0\}$ and $V^+(\w,0)\in\cL_{\hat z,\range}$ for some $\hat z\in\range\setminus\{0\}$. Assume also that $\P$ is ergodic under the group of shifts $\{T_x:x\in\cG\}$. 
	Fix $h\in\R^d$. Then for $\P$-almost every $\w$
	\begin{align*}
    &\lim_{n \to \infty} \frac{1}{n} \log \sum_{k=0}^{n-1}E_0\Bigl[\exp\Bigl\{-\sum_{i=0}^{k-1}V(T_{X_i}\w,Z_{i+1})+h\cdot X_k \Bigr\} \Bigr]  
	=\sup_{\substack{0<s\le 1 \\  \xi:\xi/s\in\Uset}  }\Bigl\lbrace s\Lapp\Bigl( \frac{\xi}{s}  \Bigr)  +h\cdot\xi \Bigr\rbrace\quad\text{and}\\
    &\lim_{n \to \infty} \frac{1}{n} \max_{0\le k\le n-1}\max_{x_{0:k}:x_0=0}\Bigl\{-\sum_{i=0}^{k-1}V(T_{x_i}\w,z_{i+1})+h\cdot x_k \Bigr\}  
	=\sup_{\substack{0<s\le 1 \\  \xi:\xi/s\in\Uset}  }\Bigl\lbrace s\gpp\Bigl( \frac{\xi}{s}  \Bigr)  +h\cdot\xi \Bigr\rbrace.
	\end{align*}
\end{theorem}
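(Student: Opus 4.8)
The plan is to recognize the left-hand side as a ``point-to-level at time $\le n$'' free energy and to decompose it over the length $k$ of the walk, reducing it to a supremum over the length-restricted quantities already controlled by Theorem \ref{thm:p2p}. I will treat only the positive-temperature statement (for $\Lapp$), the zero-temperature statement being identical with $\log\sum$ replaced by $\max$ and sums replaced by maxima. First, I would rewrite the $k$-th summand using the notation of Section \ref{sec:restricted}: for fixed $k$,
\[
\sum_{x\in D_k}e^{G_{0,(k),x}+h\cdot x}=E_0\Bigl[\exp\Bigl\{-\sum_{i=0}^{k-1}V(T_{X_i}\w,Z_{i+1})+h\cdot X_k\Bigr\}\Bigr],
\]
so the quantity inside the limit is $\tfrac1n\log\sum_{k=0}^{n-1}\sum_{x\in D_k}e^{G_{0,(k),x}+h\cdot x}$. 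Since there are only $n$ terms in the outer sum, the $\tfrac1n\log$ of the sum has the same $\limsup$ and $\liminf$ as $\tfrac1n\max_{0\le k\le n-1}\log\sum_{x\in D_k}e^{G_{0,(k),x}+h\cdot x}$; this is the standard ``max vs.\ sum'' observation, valid because the number of terms grows only polynomially (in fact linearly) in $n$.

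Next I would establish the lower bound. Fix $s\in(0,1]$ and $\xi$ with $\xi/s\in\Uset$. For each $n$ choose $k=\fl{ns}$ and $x=x_k\in D_k$ with $x_k/k\to\xi/s$, e.g.\ $x_k=\tilde x_k(\xi/s)$. Then $\tfrac1n(G_{0,(k),x_k}+h\cdot x_k)=\tfrac kn\cdot\tfrac1k G_{0,(k),x_k}+h\cdot\tfrac{x_k}{n}$, and using $k/n\to s$, $x_k/n\to\xi$, and the point-to-point limit \eqref{eq:p2p} of Theorem \ref{thm:p2p}\eqref{p2p} (finiteness of $\Lapp$ under the hypotheses of Theorem \ref{shapetheorem} guaranteeing we are not in the $+\infty$ regime), the right-hand side converges to $s\Lapp(\xi/s)+h\cdot\xi$. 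Taking the supremum over admissible $(s,\xi)$ gives ``$\ge$''. For the upper bound I would, for each $n$ and each $0\le k\le n-1$, split according to whether $k/n$ is small or not. Writing $s_n=k/n\in[0,1)$ and noting $D_k\subset\{x:|x|_1\le k\max_z|z|_1\}$, I would use the shape theorem for the restricted polymer, Theorem \ref{th:Gshape} (applied with $\Usetgen=\Uset$), to say that uniformly over $x\in D_k$ with $x/k$ staying in a fixed compact subset of $\ri\Uset$ one has $G_{0,(k),x}=k\Lapp(x/k)+o(k)=o(n)+ s_n\,n\,\Lapp(x/k)$; combined with $h\cdot x = n\,h\cdot(x/n)$ and $x/n = s_n\cdot(x/k)$, each term is at most $n\bigl(s_n\Lapp(\xi_n/s_n)+h\cdot\xi_n\bigr)+o(n)$ with $\xi_n=x/n$ and $\xi_n/s_n=x/k\in\Uset$, hence bounded by $n\sup_{(s,\xi)}\{s\Lapp(\xi/s)+h\cdot\xi\}+o(n)$. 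Taking $\tfrac1n\max_k$ and $n\to\infty$ yields ``$\le$''.

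The main obstacle is the uniformity needed in the upper bound: Theorem \ref{th:Gshape} gives locally uniform control only over $x/k$ in $\Usetgen_\delta$, i.e.\ staying a fixed distance $\delta$ from $\Uset\setminus\ri\Uset$ and (when $0\in\Uset$) from $\Uset_0$, and moreover the scaling parameter $s_n=k/n$ ranges all the way down to $0$, where $x/n\to0$ and the ``direction'' $x/k$ can sit on the boundary. I would handle the boundary-of-$\Uset$ issue by a standard $\delta\downarrow0$ argument together with the a priori linear bounds on $G_{0,(k),x}$ coming from $V^+\in L^1$ and Lemma \ref{alphatil-lb1}/\ref{alphatil-lb2}-type lower bounds (so that contributions from $x/k$ within $\delta$ of the bad set are negligible in the exponential scale as $\delta\to0$), exactly as in the proof of Theorem \ref{shapetheorem.aux}; this is why the hypotheses of Theorem \ref{shapetheorem} (not merely $V^+\in L^1$) are invoked. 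The small-$s_n$ regime is in fact harmless: when $k=o(n)$, both $G_{0,(k),x}$ and $h\cdot x$ are $o(n)$ by the linear bounds, so those terms do not affect the $\limsup$; and for $s_n$ bounded away from $0$ one is genuinely in the locally uniform regime of Theorem \ref{th:Gshape}. Finally, the ``$\limsup\le\liminf$'' chain closes the equality, and the zero-temperature version is obtained verbatim by replacing $\log\sum$ with $\max$ throughout and invoking the $\gpp$ halves of Theorems \ref{thm:p2p} and \ref{th:Gshape}.
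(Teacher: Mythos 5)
Your lower bound is fine (it is the same in substance as the paper's, which simply keeps the $k=n-1$ and $k=0$ terms and invokes the point-to-level limit), but your upper bound has a genuine gap. First, you import hypotheses that Theorem \ref{newshapetheorem3} does not have: the statement assumes only the $\cL$-class conditions on $V^+$ and ergodicity, with no control whatsoever on $V^-$. An a priori linear upper bound $G_{0,(k),x}\le Ck$ (equivalently, a lower bound on path sums of $V$) is exactly a statement about $V^-$, and the bounds of Lemmas \ref{alphatil-lb1}--\ref{alphatil-lb2} are not available here; so the crude control you want for the ``bad'' directions does not exist under the stated assumptions. Second, and more fundamentally, the ``standard $\delta\downarrow 0$'' step is not justified: the proof of Theorem \ref{shapetheorem.aux} never shows that near-boundary contributions are negligible, it simply restricts to $\cA_\delta$ to avoid them. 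In the present problem you cannot discard endpoints with $x/k$ near the relative boundary of $\Uset$, because the quantity you must beat, $\sup_{\zeta\in\Uset}\{\Lapp(\zeta)+h\cdot\zeta\}$, may be attained only at or near the relative boundary (a tilt $h$ pushes the optimizer toward extreme points of $\Uset$), where $\Lapp$ need not be continuous and Theorem \ref{th:Gshape} gives no uniformity; bounding those terms by $Ck$ does not close the argument since $C$ can exceed the right-hand side. What you would be re-proving at that point is precisely the point-to-level limit.

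The paper avoids all of this: by homogeneity in $s$, the right-hand side equals $\sup_{0<s\le1}s\sup_{\zeta\in\Uset}\{\Lapp(\zeta)+h\cdot\zeta\}=\max\bigl(0,\Lapl(h)\bigr)$, and then the upper bound follows at once from
\[
\frac1n\log\sum_{k=0}^{n-1}\sum_{x\in D_k}e^{G_{0,(k),x}+h\cdot x}\le\frac{\log n}{n}+\frac1n\max_{0\le k\le n-1}\log\sum_{x\in D_k}e^{G_{0,(k),x}+h\cdot x}
\]
together with the point-to-level limit \eqref{eq:p2l} and the duality $\Lapl(h)=\sup_{\zeta\in\Uset}\{\Lapp(\zeta)+h\cdot\zeta\}$ of Theorem \ref{thm:p2p}\eqref{p2l}, which hold under exactly the hypotheses of Theorem \ref{newshapetheorem3}; no shape theorem, no boundary analysis, and no lower bound on $V$ are needed. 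If you replace your use of Theorem \ref{th:Gshape} by Theorem \ref{thm:p2p}\eqref{p2l} and add the homogeneity identity, your argument collapses to the paper's proof; as written, the upper bound does not go through.
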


\begin{proof}
We prove the first limit, the second being similar.
Observe that 
    \begin{align}\label{absLa}
    \sup_{\substack{0<s\le 1 \\  \xi:\xi/s\in\Uset}  }\Bigl\lbrace s\Lapp\Bigl( \frac{\xi}{s}  \Bigr)  +h\cdot\xi \Bigr\rbrace
    =\sup_{0<s\le 1}s\sup_{\xi\in\Uset}\{\Lapp(\xi)+h\cdot\xi\}
    =\max(0,\Lapl(h)).
    \end{align}

Next, write 
\begin{align*}
 &\frac{1}{n} \log\sum_{k=0}^{n-1}E_0\Bigl[\exp\Bigl\{-\sum_{i=0}^{k-1}V(T_{X_i}\w,Z_{i+1})+h\cdot X_k \Bigr\} \Bigr] 
   =  \frac{1}{n} \log \sum_{k=0}^{n-1}  \sum_{x\in D_k} e^{G_{0,(k),x}+h\cdot x}\notag\\
&\qquad\le \frac{1}{n}\log n +  \frac{1}{n} \max_{0\le k\le n-1}\log\sum_{x\in D_k} e^{G_{0,(k),x}+h\cdot x}\,.
    \end{align*}
Together with \eqref{eq:p2l} this gives
\[ \varlimsup_{n \to \infty} \frac{1}{n} \log \sum_{k=0}^{n-1}E_0\Bigl[\exp\Bigl\{-\sum_{i=0}^{k-1}V(T_{X_i}\w,Z_{i+1})+h\cdot X_k \Bigr\} \Bigr]  \le\Lapl(h).\] 
This and \eqref{absLa} imply the upper bound
	\begin{align*}
    \varlimsup_{n \to \infty} \frac{1}{n} \log \sum_{k=0}^{n-1}E_0\Bigl[\exp\Bigl\{-\sum_{i=0}^{k-1}V(T_{X_i}\w,Z_{i+1})+h\cdot X_k \Bigr\} \Bigr]  
	\le\sup_{\substack{0<s\le 1 \\  \xi:\xi/s\in\Uset}  }\Bigl\lbrace s\Lapp\Bigl( \frac{\xi}{s}  \Bigr)  +h\cdot\xi \Bigr\rbrace.
	\end{align*}

For the other bound observe that replacing the sum by the $k=n-1$ term gives
	\begin{align*}
    &\varliminf_{n \to \infty} \frac{1}{n} \log \sum_{k=0}^{n-1}E_0\Bigl[\exp\Bigl\{-\sum_{i=0}^{k-1}V(T_{X_i}\w,Z_{i+1})+h\cdot X_k \Bigr\} \Bigr]  \\
	&\qquad\ge
    \varliminf_{n \to \infty} \frac{1}{n-1} \log E_0\Bigl[\exp\Bigl\{-\sum_{i=0}^{n-2}V(T_{X_i}\w,Z_{i+1})+h\cdot X_k \Bigr\} \Bigr]=\Lapl(h)  
	\end{align*}
    and similarly, replacing the sum by the $k=0$ term gives
	\begin{align*}
    \varliminf_{n \to \infty} \frac{1}{n} \log \sum_{k=0}^{n-1}E_0\Bigl[\exp\Bigl\{-\sum_{i=0}^{k-1}V(T_{X_i}\w,Z_{i+1})+h\cdot X_k \Bigr\} \Bigr] \ge0.
    \end{align*}
Together with \eqref{absLa} these two lower bounds give the desired lower bound that completes the proof of the theorem.
\end{proof}

\section{Proof of Theorem \ref{VarForm}}
The lower bound comes by an application of the shape theorem \eqref{p2plim} and a perturbation of the potential by the cocycle. 
The upper bound comes by constructing an approximately optimal cocycle and then extracting an optimal cocycle from a converging subsequence.\smallskip

We again work with the case of $\alpha$ and leave it to the reader to check that the case of $\alpha_\infty$ works similarly.\smallskip

Fix a face $\cA\ne\{0\}$ of $\cC_+$, $B\in\cK_\cA$, and rationals $\gamma_z\in\Q_+$, where $z\in\range'=\range\cap\cA$.
Take $\ell\in\N$ such that $\ell\gamma_z\in\Z_+$ for all $z\in\range'$.
Write $\xi=\sum_{z\in\range'}\gamma_z z$ and $x_{n\ell}=\sum_{z\in\range'}n\ell\gamma_z z=n\ell\xi$, for $n\in\N$.
Then 
	\[B(\w,0,x_{n\ell})=\sum_{i=0}^{n-1} B(T_{ix_\ell}\w,0,x_\ell),\]
and the ergodic theorem implies that with $\P$-probability one,
	\[\lim_{n\to\infty}(n\ell)^{-1}B(0,x_{\ell n})=\ell^{-1}\E[B(0,x_\ell)\,|\,\cI_{x_\ell}],\]
where we recall that for $x\in\cG$, $\cI_x$ is the $\sigma$-algebra of $T_x$-invariant events.
By \cite[Lemma B.4]{Jan-Ras-18-arxiv}, we have $\P$-almost surely $\E[B(0,x_\ell)\,|\,\cI_{x_\ell}]=-h(B)\cdot x_\ell$ and hence
the above limit equals $-h(B)\cdot\xi$.
Now by \eqref{p2plim} we have with $\P$-probability one
\begin{align*}
\alpha(\xi)
&=\lim_{n\to\infty}\frac{-1}{n\ell}\log E_0\Bigl[ \exp\Bigl\{-\sum_{k=0}^{\tau_{x_{n\ell}}-1} V(T_{X_k}\w, Z_{k+1})\Bigr\} \one_{\lbrace \tau_{x_{n\ell}}<\infty \rbrace}  \Bigr]\\
&=\lim_{n\to\infty}\frac{-1}{n\ell}\log E_0\Bigl[ \exp\Bigl\{-\sum_{k=0}^{\tau_{x_{n\ell}}-1} V(T_{X_k}\w, Z_{k+1})-B(\w,0,x_{n\ell})\Bigr\} \one_{\lbrace \tau_{x_{n\ell}}<\infty \rbrace}  \Bigr]+h(B)\cdot\xi\\
&=\lim_{n\to\infty}\frac{-1}{n\ell}\log E_0\Bigl[ \exp\Bigl\{-\sum_{k=0}^{\tau_{x_{n\ell}}-1} \bigl(V(T_{X_k}\w, Z_{k+1})+B(T_{X_k}\w,0,Z_{k+1})\bigr)\Bigr\} \one_{\lbrace \tau_{x_{n\ell}}<\infty \rbrace}  \Bigr]
+h(B)\cdot\xi.
\end{align*}
If $B\in\cK^+_\cA(V)$, then the expected value on the last line of the above display is bounded above by 
$1$. To see this consider the Markov chain  that moves from $x\in\cG(\rangegen)$ to $x+z$, $z\in\rangegen$, 
with probability 
$e^{-V(T_x\w,z)-B(T_x\w,0,z)}p(z)$ and moves from $x$ to a cemetery state $\Delta$ with the remaining probability 
$1-\sum_{z\in\rangegen}e^{-V(T_x\w,z)-B(T_x\w,0,z)}p(z)$. 
Once at $\Delta$ the chain remains there forever.
Then the expectation in question is the same as the probability this Markov chain ever reaches $x_{n\ell}$.

We have thus shown that for each fixed $B\in\cK^+_\cA(V)$ and $\xi\in\cA$, \eqref{alpha>h.xi} holds with probability one. This inequality also holds with probability one, simultaneously for a countable dense set of $\xi\in\cA$.
Since $h(B)\cdot\xi$ is continuous in $\xi$ and $\alpha$ is continuous in $\xi$ on the interior of each face of $\cC_+$, we conclude that for each fixed $B\in\cK^+_\cA(V)$ we have that with $\P$-probability one \eqref{alpha>h.xi} holds for all $\xi\in\cA$.\smallskip


Next, we prove \eqref{alpha-var}. It is enough to work with $\cA=\cC_+$. The proof for the case of a different face is the same, after replacing $\cC_+$ by $\cA$ everywhere. We therefore drop the $\cA$ from the indices of $\cK_\cA$ and $\cK^+_\cA(V)$ and now assume $\P$ is ergodic under $\{T_z:z\in\range\}$. 

First, observe that since $\P$ is ergodic under the shifts $\{T_x:x\in\cG\}$, $\alpha$ is deterministic on $\ri\cC_+$.
Extend $\alpha$ to all of $\R^d$ by setting $\alpha(\xi)=\infty$ for $\xi\not\in\cC_+$. For $h\in\R^d$ let
    \[\alpha^*(h)=\sup\{h\cdot\xi-\alpha(\xi):\xi\in\R^d\}\]
be the convex conjugate of $\alpha$. Note that $\alpha(0)=0$ implies $\alpha^*(h)\ge0$. Furthermore, 
for any $s>0$
    \[\alpha^*(h)=s\sup\{h\cdot\xi/s-\alpha(\xi/s):\xi\in\R^d\}=s\alpha^*(h).\]
Consequently, 
    \[\alpha^*(h)=\begin{cases}0&\text{if }h\cdot\xi\le\alpha(\xi)\quad\forall\xi\in\cC_+,\\ \infty&\text{otherwise.}\end{cases}\]

Since $\alpha$ is convex \cite[Theorem 4.17]{Ras-Sep-15-ldp} says that the bi-conjugate
    \[\alpha_\cA^{**}(\xi)=\sup\{h\cdot\xi-\alpha^*(h):h\in\R^d\}\]
is the same as the lower semicontinuous regularization of $\alpha$, 
which matches $\alpha$ on $\ri\cC_+$. Therefore, for each 
$\xi\in\ri\cC_+$ and each $j\in\N$ there exists an $h_j\in\R^d$ such that 
$\alpha^*(h_j)=0$
and
    \begin{align}\label{h>a}
    h_j\cdot\xi\ge\alpha(\xi)-1/j.
    \end{align}

By  Theorem \ref{newshapetheorem3} and Lemma \ref{-a>La} we have $\P$-almost surely
    \begin{align*}
    \varlimsup_{n\to\infty} n^{-1}\log\sum_{k=0}^{n-1} E_0\Bigl[\exp\Bigl\{-\sum_{i=0}^{k-1}V(T_{X_i}\w,Z_{i+1})+h_j\cdot X_k\Bigr\}\Bigr]
    &\le\sup_{\substack{0<s\le1\\ \zeta/s\in\Uset}}\bigl\{h_j\cdot\zeta+s\Lapp(\zeta/s)\bigr\}\\
    &\le\sup_{\zeta\in\cC_+}\{h_j\cdot\zeta-\alpha(\zeta)\}\\
    &=\alpha^*(h_j)=0.
    \end{align*}
    

	Define 
	\begin{equation}
	g_j(\w)= \log \Bigl(1+\sum_{n\ge 1}e^{-n/j}\sum_{k=0}^{n-1}E_{0}^\w\Bigl[\exp\Bigl\{-\sum_{i=0}^{k-1}V(T_{X_i}\w,Z_{i+1})+h_j\cdot X_k\Bigr\}\Bigr]\Bigr).\label{eq:gdef}
	\end{equation}

	We just showed that the inner sum in \eqref{eq:gdef} grows subexponentially in $n$ and hence $g_j\ge0$ is finite $\P$-almost surely.
	Furthermore,
	\begin{align*}
	e^{g_j(\w)} 
	&= 1+\sum_{n\ge1}e^{-n/j}\Bigl(1+\sum_{k=1}^{n-1}\sum_{{z}\in \range}p(z)e^{-V(\w,z)+h_j\cdot z}\,
    E_0^{T_z\w}\Bigl[e^{-\sum_{i=0}^{k-2}V(T_{X_i}T_z\w,Z_{i+1})+h_j\cdot X_{k-1}}\Bigr] \Bigr)\\
	& = \frac{1}{1-e^{-1/j}}+e^{-1/j}\sum_{{z}\in \range}p(z)e^{-V(\w,z)+h_j\cdot z +g_j(T_z\w)}.
	\end{align*}

    \begin{remark}
    In zero temperature, i.e.\ for the case of $\alpha_\infty$, the analogous definition would be
	\begin{align*}
	    g_{j,\infty}(\w)
	    &= \max_{n \geq 1}\max_{0 \leq k \leq n-1}\max_{x_{0:k}:x_0=0}\Bigl\{ -\sum_{i=0}^{k-1}V(T_{x_i}\w,z_{i+1}) +h_j\cdot x_k - n/j\Bigr\}\\
	    &= \max_{z \in \range}\bigl\{-V(\w,z) + h_j \cdot z + g_{j,\infty}(T_z \w)\bigr\}  - 1/j.
	\end{align*}
    \end{remark}
	
	Setting $B_j(\w,x,y)= g_j(T_x\w)- g_j(T_y\w )-h_j\cdot(y-x)$, we have $\P$-almost surely
	\begin{equation}\label{B-up}
	\sum_{{z}\in \range} p(z)e^{-V(\w,z) -B_j(\w,0,z)}\le e^{1/j}.
	\end{equation}
    This implies that  for each $z\in \range$,
        \[B_j(\w,0,z)\ge\log p(z)-V(\w,z)-1/j.\]
    Since $V^+\in L^{1}$  we see that $B_j^-(0,z)$ is uniformly integrable.
    Since $\E[B_j(0,z)]=-h_j\cdot z$, we have $\E[B_j^+(0,z)]=\E[B_j^-(0,z)]-h_j\cdot z$.
    Since $\xi\in\ri\cC_+$ we can find $\gamma_z>0$, $z\in\range$, such that $\xi=\sum_{z\in\range}\gamma_z z$. Then for any $z\in\range$,
        \begin{align*}
        \gamma_z\,\E[B_j^+(0,z)]
        \le\sum_{\bar z\in\range}\gamma_{\bar z}\,\E[B_j^+(0,\bar z)]
        =\sum_{\bar z\in\range}\gamma_{\bar z}\,\E[B_j^-(0,\bar z)]-h_j\cdot\xi
        \le\sum_{\bar z\in\range}\gamma_{\bar z}\E[B_j^-(0,\bar z)]
        -\alpha(\xi)+1/j.
        \end{align*}
    Therefore, $\E[B_j^+(0,z)]$ is uniformly bounded for each $z\in\range$. By \cite[Lemma 4.3]{Kos-Var-08} we can write
    \[B_j^+(0,z) =\hat{B}^{+}_j(0,z) +R_j(z) \]
    where, along a subsequence,  $\hat{B}^{+}_j(0,z)$ is uniformly integrable and $R_j(z) \ge 0$ converges to 0 in $\P$-probability. Extract a further subsequence $\hat{B}^{+}_{j_\ell}(0,z)$ of $\hat{B}^{+}_j(0,z)$ such that
    $\tilde{ B}_{j_\ell}(0,z)= \hat{B}^{+}_{j_\ell}(0,z)-B^{-}_{j_\ell}(0,z)$ is weakly convergent in $L^1(\P)$ to some ${B}(0,z)$, and  $R_{j_\ell}(z)$ converges $\P$-almost surely to 0, for all $z\in\range$. Abbreviate $j_\ell$ as $j$. 
    By \cite[Theorem 3.12]{Rud-91},
    ${B}(0,z)$ is in the strong $L^1(\P)$-closure of the convex hull of $\lbrace \tilde{B}_j(0,z):j\ge k \rbrace$ for any $k \geq 1$. This means that there exists a 
    sequence of finite convex combinations 
    $\tilde{G}_j(0,z) = \sum_{k=j}^{\infty}\delta_{j,k} \tilde{B}_k(0,z)$ that converges to $ { B}(0,z)$ strongly  in $L^1(\P)$.
    Up to a further subsequence, $\tilde{G}_j(0,z)$ converges $\P$-almost surely to ${B}(0,z)$, for all $z\in\range$. Consequently, 
        \[G_j(0,z)=\sum_{k=j}^\infty\delta_{j,k} B_k(0,z)=\tilde G_j(0,z)+\sum_{k=j}^\infty\delta_{j,k} R_k(z)\] 
    also converges $\P$-almost surely to $B(0,z)$.
    
    Since $B_j$ is a covariant cocycle we have $\P$-almost surely and for any $z_1,z_2\in\range$
        \[B_j(\w,0,z_1)+B_j(T_{z_1}\w,0,z_2)=B_j(\w,0,z_2)+B_j(T_{z_2}\w,0,z_1).\]
    This {\it one cell} cocycle property transfers to $G_j$ and thus to the limit $B$. Define \[B(\w,x,x+z)=-B(\w,x+z,x)=B(T_x\w,0,z)\] 
    for $x\in\cG$ and $z\in\range$. Enumerate $\range=\{z_1,\dotsc,z_M\}$. 
    For $x,y\in\cG$ write $y-x=\sum_{i=1}^M b_i z_i$ with $b_i\in\Z$, 
    and define
    \[B(\w,x,y)=\sum_{i=1}^M\sum_{j=1}^{b_i} B\Bigl(\w,x+\sum_{r=1}^{i-1}b_r z_r+(j-1)z_i,x+\sum_{r=1}^{i-1}b_r z_r+jz_i\Bigr).\]
    Due to the one cell cocycle property, this definition does not depend on the choice of the coefficients $b_i$. It is also immediate that now $B$ is an $L^1$ covariant cocycle. Furthermore, by \eqref{B-up} and Jensen's inequality
    \[\sum_{z\in\range}p(z)e^{-V(\w,z)-G_j(\w,0,z)}
    \le\sum_{k=j}^\infty\delta_{j,k}\sum_{z\in\range}p(z)e^{-V(\w,z)-B_k(\w,0,z)}\le \sum_{k=j}^\infty \delta_{j,k}e^{1/k}\le e^{1/j}.\]
Taking $j\to\infty$ shows that $B\in\cK^+(V)$. Also,
    \[h_j\cdot z=-\E[B_j(0,z)]=-\E[\tilde B_j(0,z)+R_j(z)]\le-\E[\tilde B_j(0,z)].\]
    This and \eqref{h>a} imply that if we write $\xi=\sum_{z\in\range}\gamma_z z$ with $\gamma_z\ge0$ then
    \[-\sum_{z\in\range}\gamma_z\,\E[\tilde G_j(0,z)]\ge\sum_{k=j}^\infty\delta_{j,k}\,h_k\cdot\xi
    \ge\alpha(\xi)-\sum_{k=j}^\infty\frac{\delta_{j,k}}k\ge\alpha(\xi)-1/j.\]
    Taking $j\to\infty$ we find that 
    \[h(B)\cdot\xi=\sum_{z\in\range}\gamma_z\,h(B)\cdot z=-\sum_{z\in\range}\gamma_z\,\E[B(0,z)]\ge\alpha(\xi).\]
    This implies \eqref{alpha-var}. Together with \eqref{alpha>h.xi} it also shows that in fact $\alpha(\xi)=h(B)\cdot\xi$.  Theorem \ref{VarForm} is proved for the case of $\alpha$. 
    The case of $\alpha_\infty$ is almost identical after the appropriate definitions are substituted.\hfill\qed

\section{Proof of Theorem \ref{shape-green}}
First, we relate the Green's function $g$ to $a$.

\begin{lemma}
	For $x,y$ with $y-x\in \gplus,$
	\begin{equation}\label{eq:aux1010}
	a(x,y)+\log g(x,y)=\log g(y,y).
	\end{equation}
\end{lemma}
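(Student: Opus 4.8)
The plan is to prove \eqref{eq:aux1010} by the standard first-passage (renewal) decomposition of weighted paths at the hitting time $\tau_y$. The case $x=y$ is immediate: $a(x,x)=0$ by convention and $g(x,x)=g(y,y)$, so both sides equal $\log g(y,y)$. So assume from now on that $x\ne y$.

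First I would record the two series expansions. Unfolding the definition \eqref{a-def},
\[e^{-a(x,y)}=\sum_{j\ge1}E_x\Bigl[\exp\Bigl\{-\sum_{k=0}^{j-1}V(T_{X_k}\w,Z_{k+1})\Bigr\}\one_{\{\tau_y=j\}}\Bigr],\]
while the definition of $g$ gives, for $x\ne y$,
\[g(x,y)=\sum_{m\ge1}E_x\Bigl[\exp\Bigl\{-\sum_{k=0}^{m-1}V(T_{X_k}\w,Z_{k+1})\Bigr\}\one_{\{X_m=y\}}\Bigr]\]
(the $\one\{y=x\}$ term dropping since $x\ne y$). Then, in the $m$-th term of $g(x,y)$, I would condition on the value $j=\tau_y\in\{1,\dots,m\}$ (note $\{X_m=y\}$ forces $\tau_y\le m$) and apply the strong Markov property at $\tau_y$. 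Because the potential at step $k$ is read off the current location $X_k$, the weight $\exp\{-\sum_{k=0}^{m-1}V(T_{X_k}\w,Z_{k+1})\}$ splits exactly as the product of the weight accumulated up to time $j$ (on $\{\tau_y=j\}$) and the weight of the length-$(m-j)$ excursion from $y$ back to $y$, yielding
\[E_x\Bigl[e^{-\sum_{k=0}^{m-1}V(T_{X_k}\w,Z_{k+1})}\one_{\{X_m=y\}}\Bigr]
=\sum_{j=1}^{m}E_x\Bigl[e^{-\sum_{k=0}^{j-1}V(T_{X_k}\w,Z_{k+1})}\one_{\{\tau_y=j\}}\Bigr]\,
E_y\Bigl[e^{-\sum_{k=0}^{m-j-1}V(T_{X_k}\w,Z_{k+1})}\one_{\{X_{m-j}=y\}}\Bigr].\]

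Next I would sum over $m\ge1$, using Tonelli (every term is nonnegative) to interchange the $m$- and $j$-summations, and substitute $i=m-j\ge0$. The $j$-sum then factors out and equals $e^{-a(x,y)}$, while the remaining sum over $i\ge0$ of $E_y[e^{-\sum_{k=0}^{i-1}V(T_{X_k}\w,Z_{k+1})}\one_{\{X_i=y\}}]$ is exactly $g(y,y)$, with the $i=0$ term supplying the $\one\{y=y\}$. This gives $g(x,y)=e^{-a(x,y)}\,g(y,y)$, and taking logarithms yields \eqref{eq:aux1010}. Since $y-x\in\gplus$ there is an admissible path from $x$ to $y$, so $e^{-a(x,y)}>0$, $g(x,y)>0$, and $g(y,y)\ge1$; hence all the logarithms are well defined in $(-\infty,\infty]$ (and finite under the standing hypotheses of Theorem \ref{shape-green}, which control both $a$ and $g$). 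I do not expect a genuine obstacle here: the only points needing a little care are the application of the strong Markov property at the random time $\tau_y$ and the justification of the interchange of the two infinite sums, both of which are routine given the nonnegativity of every term.
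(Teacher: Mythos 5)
Your proof is correct and follows essentially the same route as the paper: decompose the $m$-step weight in $g(x,y)$ according to the first hitting time $\tau_y=\ell$, apply the Markov property at that time, interchange the (nonnegative) sums and reindex to factor out $e^{-a(x,y)}$ times $g(y,y)$. The only cosmetic difference is that you also spell out the positivity/well-definedness of the logarithms, which the paper leaves implicit.
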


 \begin{proof}
Since $a(x,x)=0$ the identity is clear if $y=x$. Assume $y\ne x$. Then applying the Markov property in the first equality we have
\begin{align*}
 &g(x,y)
 = \sum_{m=1}^\infty  \sum_{\ell=1}^{m} E_x\Bigl[  e^{ -\sum_{k=0}^{\ell-1} V(T_{X_k}\w,Z_{k+1})}  \one_{\lbrace\tau_y=\ell \rbrace } \Bigr]\times E_y \Bigl[ e^{ - \sum_{k=0}^{m-\ell-1} V(T_{X_k}\w,Z_{k+1})} \one_{\lbrace X_{m-\ell}=y\rbrace} \Bigr]\\
&=\sum_{\ell =1}^\infty   E_x\Bigl[  \exp\Bigl\{ -\sum_{k=0}^{\ell-1} V(T_{X_k}\w,Z_{k+1})\Bigr\} \one_{\lbrace\tau_y=\ell \rbrace} \Bigr]\times \sum_{m=\ell }^{\infty} E_y \Bigl[ \exp\Bigl\{ -\!\!\sum_{k=0}^{m-\ell-1} V(T_{X_k}\w,Z_{k+1})\Bigr\} \one_{\lbrace X_{m-\ell}=y\rbrace} \Bigr] \\
&=\sum_{\ell =1}^\infty   E_x\Bigl[  \exp\Bigl\{ -\sum_{k=0}^{\ell-1} V(T_{X_k}\w,Z_{k+1})\Bigr\} \one_{\lbrace \tau_y=\ell \rbrace } \Bigr] \times \sum_{m=0}^{\infty} E_y \Bigl[ \exp\Bigl\{ - \sum_{k=0}^{m-1} V(T_{X_k}\w,Z_{k+1})\Bigr\} \one_{\lbrace X_{m}=y\rbrace } \Bigr] \\
&= e^{-a(x,y)}\times g(y,y).
\end{align*}
The claim follows.
 \end{proof}
 
 Let $\sigma_k$ be the time of $k$-th return of the reference random walk to its starting point.

\begin{lemma}
Assume \eqref{V>0}. Then $\P$-almost surely, 
for all $y\in \Z^d$,
    \begin{align}\label{g-bound}
    1\le g(y,y)=\dfrac{1}{1-E_y\bigl[\exp \bigl\{ - \sum_{k=0}^{\sigma_1-1}V(T_{X_k}\w,Z_{k+1})\bigr\}  \one_{\lbrace  \sigma_1<\infty \rbrace}\bigr] }\le 1/P_y(\sigma_1=\infty).
    \end{align}
\end{lemma}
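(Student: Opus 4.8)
The plan is to decompose $g(y,y)$ into successive excursions of the reference walk back to $y$, which turns it into a geometric series whose ratio is the weighted return probability.

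First I would fix $y\in\Z^d$ and deal with measurability. By $T$-invariance of $\P$, condition \eqref{V>0} gives $\P\{V(T_x\w,z)\ge0\}=1$ for every $x\in\Z^d$ and every $z\in\rangez$, so intersecting over the countably many $x$ and the finitely many $z$ produces a single full-measure event $\Omega_1$, independent of $y$, on which $V(T_x\w,z)\ge0$ for all $x\in\Z^d$ and all $z\in\rangez$. From now on work on $\Omega_1$. Since $\sigma_j$ is the time of the $j$-th return of the walk to its starting point $y$, on $\{\sigma_j<\infty\}$ one has $\sum_{i=1}^{\sigma_j}Z_i=X_{\sigma_j}-X_0=0$, so Lemma \ref{lm:loops} forces $Z_i\in\rangez$ for all $1\le i\le\sigma_j$, whence $V(T_{X_i}\w,Z_{i+1})\ge0$ for all $0\le i<\sigma_j$. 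In particular, writing
\[q=E_y\Bigl[\exp\Bigl\{-\sum_{k=0}^{\sigma_1-1}V(T_{X_k}\w,Z_{k+1})\Bigr\}\one_{\{\sigma_1<\infty\}}\Bigr],\]
we get $0\le q\le P_y(\sigma_1<\infty)=1-P_y(\sigma_1=\infty)\le1$.

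Next I would carry out the excursion decomposition. For each finite $m\ge1$ there is the disjoint decomposition $\{X_m=y\}=\bigsqcup_{j\ge1}\{\sigma_j=m\}$, so the definition of $g$ together with Tonelli (all integrands are nonnegative) gives
\[g(y,y)=1+\sum_{j=1}^\infty E_y\Bigl[\exp\Bigl\{-\sum_{k=0}^{\sigma_j-1}V(T_{X_k}\w,Z_{k+1})\Bigr\}\one_{\{\sigma_j<\infty\}}\Bigr].\]
I would then show by induction on $j$ that the $j$-th term equals $q^j$: on $\{\sigma_{j-1}<\infty\}$ split the exponent at time $\sigma_{j-1}$ (where $X_{\sigma_{j-1}}=y$) and apply the strong Markov property at $\sigma_{j-1}$; because the walk restarts at the same point $y$, no shift of $\w$ is needed and the conditional expectation of the second factor is exactly $q$, so the $j$-th term is $q$ times the $(j-1)$-th term. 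Summing the geometric series yields $g(y,y)=\sum_{j\ge0}q^j=\frac{1}{1-q}$, understood in $[1,\infty]$: the case $q>1$ is excluded by the previous paragraph, and when $q=1$ both sides are $+\infty$. This is the middle equality in \eqref{g-bound}. The two inequalities are then immediate: $g(y,y)=\sum_{j\ge0}q^j\ge q^0=1$, and $1-q\ge P_y(\sigma_1=\infty)$ gives $g(y,y)=1/(1-q)\le 1/P_y(\sigma_1=\infty)$. Since $\Omega_1$ does not depend on $y$, the statement holds $\P$-almost surely simultaneously for all $y$.

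The only step requiring genuine care is the appeal to Lemma \ref{lm:loops}: it is what guarantees the excursion weight $q$ is at most $1$, so that the geometric series converges, the degenerate recurrent case $q=1$ is correctly read as $g(y,y)=+\infty=1/P_y(\sigma_1=\infty)$, and the upper bound holds; the Markov excursion decomposition itself is routine.
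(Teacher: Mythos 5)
Your proposal is correct and follows essentially the same route as the paper: decompose $g(y,y)$ over successive returns $\sigma_j$, use the Markov property to identify the $j$-th term as $q^j$, sum the geometric series, and invoke Lemma \ref{lm:loops} together with \eqref{V>0} to get $q\le P_y(\sigma_1<\infty)$, which yields both the middle equality and the upper bound. Your explicit treatment of the degenerate case $q=1$ (both sides read as $+\infty$) and of the single full-measure event uniform in $y$ are minor refinements of the same argument, not a different approach.
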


\begin{proof}The bound $g(y,y)\ge1$ is clear. For the other bound without loss of generality, we consider $y=0$. By decomposing into the number of returns by time $m$ we can write  \begin{align*}
g(0,0)
&= 1+\sum_{m=1}^{\infty} E_0 \Bigl[ \exp\Bigl\{ - \sum_{k=0}^{m-1} V(T_{X_k}\w,Z_{k+1})\Bigr\}\one_{\lbrace X_{m}=0\rbrace } \Bigr] \\
&=  1+E_0\Bigl[ \sum_{m=1}^\infty \sum_{i=1}^m  \exp \Bigl\{- \sum_{k=0}^{\sigma_1-1}V(T_{X_k}\w,Z_{k+1})\Bigr\} \cdots \exp \Bigl\{-  \sum_{k=\sigma_{i-1}}^{\sigma_i-1}V(T_{X_k}\w,Z_{k+1})\Bigr\} \one\{\sigma_i=m\} \Bigr] \\ 
&= 1+ E_0\Bigl[ \sum_{i=1}^\infty \exp \Bigl\{ - \sum_{k=0}^{\sigma_1-1}V(T_{X_k}\w,Z_{k+1})\Bigr\} \cdots \exp \Bigl\{ - \sum_{k=\sigma_{i-1}}^{\sigma_i-1}V(T_{X_k}\w,Z_{k+1})\Bigr\}  \sum_{m=i}^\infty  \one_{\lbrace  \sigma_i=m \rbrace} \Bigr].
\end{align*}
Since $\sum_{m=i}^\infty  \one_{\lbrace  \sigma_i=m \rbrace}= \one_{\lbrace  \sigma_i<\infty \rbrace}  =  \one_{\lbrace  \sigma_1<\infty \rbrace}  \one_{\lbrace  \sigma_2-\sigma_1<\infty \rbrace} \cdots  \one_{\lbrace  \sigma_k-\sigma_{k-1}<\infty \rbrace}$ we have by the Markov property
\begin{align*}
g(0,0)&=1+ E_0\Bigl[ \sum_{i=1}^\infty \exp \Bigl\{ - \sum_{k=0}^{\sigma_1-1}V(T_{X_k}\w,Z_{k+1})\Bigr\} \one_{\lbrace  \sigma_1<\infty \rbrace}\\
&\qquad\qquad\qquad\qquad\qquad\cdots \exp \Bigl\{ - \sum_{k=\sigma_{i-1}}^{\sigma_i-1}V(T_{X_k}\w,Z_{k+1})\Bigr\}   \one_{\lbrace  \sigma_i-\sigma_{i-1}<\infty \rbrace}  \Bigr]\notag\\
&=1+\sum_{i=1}^\infty E_0\Bigl[ \exp \Bigl\{- \sum_{k=0}^{\sigma_1-1}V(T_{X_k}\w,Z_{k+1})\Bigr\} \one_{\lbrace  \sigma_1<\infty \rbrace} \Bigr]^k.
\end{align*}

If $P_0(\sigma_1=\infty)=1$, which includes the case $0\not\in\Uset$, then $g(0,0)=1$ and the claim of the lemma holds.  If, on the other hand, $P_0(\sigma_1=\infty)<1$, then it must be that $0\in \Uset$ and on the event $\{\sigma_1<\infty\}$ we have  $X_{\sigma_1}=0$ and Lemma \ref{lm:loops} tells us that $Z_k\in \rangez$ for $k<\sigma_1$. Condition \eqref{V>0} implies then that  $V(T_{X_k}\w,Z_{k+1})\ge 0$ for all such $k$. Consequently,  $E_0\bigl[ e^{- \sum_{k=0}^{\sigma_1-1}V(T_{X_k}\w,Z_{k+1})} \,\one_{\lbrace  \sigma_1<\infty \rbrace} \bigr]\le P_0(\sigma_1<\infty)<1$ and \eqref{g-bound} follows.
\end{proof}

The next lemma follows the idea in \cite[Lemma 5]{Zer-98-aap}.

\begin{lemma}
Assume \eqref{V>0} and that $\P\{V(\w,z)>0\}>0$ for some $z\in\rangez$. 
Then $\P$-almost surely
    \begin{align}\label{g/y}
    \varlimsup_{\abs{x}_1\to\infty}\frac{\abs{\log g(x,x)}}{\abs{x}_1}=0.
    \end{align}
\end{lemma}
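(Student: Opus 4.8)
By the preceding lemma $g(y,y)\ge1$, so $\abs{\log g(y,y)}=\log g(y,y)$, and \eqref{g-bound} reads $\log g(y,y)=-\log(1-q_y)$ with $q_y:=E_y\bigl[\exp\{-\sum_{k=0}^{\sigma_1-1}V(T_{X_k}\w,Z_{k+1})\}\one_{\{\sigma_1<\infty\}}\bigr]$. If $P_0$ is transient, then since the reference walk is translation invariant, $q_y\le1-P_0(\sigma_1=\infty)<1$ uniformly in $y$, whence $\log g(y,y)$ is bounded and \eqref{g/y} is immediate. So it suffices to treat the case where, in the setting of Theorem \ref{shape-green}, $\E[R_\e^d]<\infty$ for some $\e>0$; in particular $R_\e\circ T_y<\infty$ $\P$-a.s., simultaneously for all $y\in\Z^d$. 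On $\{\sigma_1<\infty\}$ the walk forms a loop at $y$, so by Lemma \ref{lm:loops} it uses only steps in $\rangez$, and then \eqref{V>0} forces $\sum_{k=0}^{\sigma_1-1}V(T_{X_k}\w,Z_{k+1})\ge0$; hence
\[1-q_y=P_y(\sigma_1=\infty)+E_y\Bigl[\bigl(1-e^{-\sum_{k=0}^{\sigma_1-1}V(T_{X_k}\w,Z_{k+1})}\bigr)\one_{\{\sigma_1<\infty\}}\Bigr]\ge E_y\Bigl[\bigl(1-e^{-\sum_{k=0}^{\sigma_1-1}V(T_{X_k}\w,Z_{k+1})}\bigr)\one_{\{\sigma_1<\infty\}}\Bigr],\]
and everything reduces to a good lower bound for this last expectation.

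\textbf{The key bound: $\log g(y,y)\le C\,(1+R_\e\circ T_y)$.} Fix $\e$ as above, write $p_{\min}=\min_{z\in\range}p(z)>0$, and let $w\in\cG_+(\rangez)$ with $\abs{w}_1=R_\e\circ T_y$ and $z'\in\rangez$ with $V(T_{y+w}\w,z')\ge\e$ realize the minimum in \eqref{def:R} applied to $T_y\w$. Since $0\in\ri\Usetz$, the face $\cC_0$ of $\cplus$ is the subspace $\spn(\rangez)$, so $\cG_+(\rangez)=\cG(\rangez)$ and, because $w\in\spn(\rangez)$, the face $\cC_w$ equals $\cC_0$, i.e.\ $\range_w=\rangez$. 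Consequently both $w$ and $-(w+z')$ lie in $\gplus\cap\spn(\rangez)$, and Lemma \ref{boundednessofsteps} yields admissible paths from $y$ to $y+w$ and from $y+w+z'$ to $y$, each using only steps in $\rangez$ and of length at most $C\abs{w}_1+C$. Truncating the first path at its last visit to $y$ gives an admissible $\rangez$-path $P_1$ from $y$ to $y+w$, of length $r\le C\abs{w}_1+C$, with $X_1,\dots,X_r\neq y$ (assume $w\neq0$; the cases $w=0$ or $z'=0$ degenerate harmlessly). Concatenating $P_1$, the single step $z'$, and the return path $P_2$ produces an admissible loop $L$ at $y$ of length $\abs{L}\le C\abs{w}_1+C$, with all increments in $\rangez$. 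On the event that the walk follows $L$, of $P_y$-probability at least $p_{\min}^{\abs{L}}$, one has $\sigma_1\le\abs{L}<\infty$ and $\sigma_1\ge r+1$, so $\sum_{k=0}^{\sigma_1-1}V(T_{X_k}\w,Z_{k+1})$ contains the term $V(T_{y+w}\w,z')\ge\e$, all other terms being $\ge0$ by \eqref{V>0}; thus that sum is $\ge\e$. Hence $1-q_y\ge(1-e^{-\e})p_{\min}^{\abs{L}}$, and taking logarithms gives $\log g(y,y)\le C(1+\abs{w}_1)=C(1+R_\e\circ T_y)$ for a deterministic $C<\infty$, $\P$-a.s.\ for all $y$.

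\textbf{Borel--Cantelli.} It remains to show $\varlimsup_{\abs{y}_1\to\infty}(R_\e\circ T_y)/\abs{y}_1=0$ $\P$-a.s. Fix $\delta>0$. Using stationarity and $\#\{y\in\Z^d:\abs{y}_1=n\}\le Cn^{d-1}$,
\[\sum_{y\in\Z^d}\P\bigl(R_\e\circ T_y\ge\delta\abs{y}_1\bigr)\le C\sum_{n\ge1}n^{d-1}\P(R_\e\ge\delta n)\le C\delta^{-d}\int_0^\infty s^{d-1}\P(R_\e\ge s)\,ds=C\delta^{-d}\,\E[R_\e^d]<\infty,\]
so by Borel--Cantelli, $\P$-a.s.\ $R_\e\circ T_y<\delta\abs{y}_1$ for all but finitely many $y$. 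Intersecting over $\delta=1/k$, $k\in\N$, gives $\varlimsup_{\abs{y}_1\to\infty}(R_\e\circ T_y)/\abs{y}_1=0$ $\P$-a.s., and combining this with the bound of the previous paragraph yields \eqref{g/y}.

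\textbf{Main obstacle.} The delicate point is the loop construction in the second step: one must simultaneously keep the loop within distance $O(R_\e\circ T_y)$ of $y$ (this is where $0\in\ri\Usetz$, the identification $\cC_w=\spn(\rangez)$, and Lemma \ref{boundednessofsteps} are used) and force the first return to $y$ to occur only after the step out of the good site $y+w$ has been taken, which is what the last-visit truncation of $P_1$ and the sign control from \eqref{V>0} accomplish; a secondary care point is to read off the summability from $\E[R_\e^d]<\infty$ directly rather than through a lossy Markov bound.
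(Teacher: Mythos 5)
Your proof is correct and follows essentially the same route as the paper's: handle the transient case via the bound $g(y,y)\le 1/P_y(\sigma_1=\infty)$, and otherwise build an admissible $\rangez$-loop through the nearest site where the potential is at least $\e$ (using Lemma \ref{boundednessofsteps} within $\spn(\rangez)$ and \eqref{V>0} with Lemma \ref{lm:loops}) to get $\log g(y,y)\le C(1+R_\e\circ T_y)$, then conclude by Borel--Cantelli from $\E[R_\e^d]<\infty$. The only differences are cosmetic refinements: your last-visit truncation makes explicit that the first return to $y$ occurs after the $\e$-step (a point the paper leaves implicit), and you run Borel--Cantelli directly on $R_\e\circ T_y$ rather than on the tail of $\log g(0,0)$.
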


\begin{proof}
The previous lemma shows that if $P_0$ is transient, then $\log g(y,y)$ is bounded. \eqref{g/y} holds in this case. Assume therefore that $P_0$ is recurrent. In particular, $0\in\Uset$.  

Let $z_0\in\rangez$ and $\e>0$ be such that $\P\{V(\w,z_0)\ge\e\}>0$. 
Note that if $x\in\cG_+(\rangez)\setminus\{0\}$, then  Lemma \ref{boundednessofsteps} says that we can write $x=\sum_{z\in\rangez}\gamma_z z$ with $\gamma_z\in\Z_+$ and $\gamma_z\le C\abs{x}_1$ for all $z\in\rangez$. This produces an admissible path $x_{0:n}$ from $0$ to $x$ of length $n=\sum_{z\in\rangez}\gamma_z\le C\abs{\rangez}\cdot\abs{x}_1$. 
Let $x_{n+1}=x+z_0$. Similarly, we can get an admissible path $x_{n+1:m}$ from $x+z_0$ to $0$ of length $m-n-1\le C'\abs{\rangez}\cdot\abs{x}_1$. 
Let $C_1=(C+C')\abs{\rangez}+1$.  Then the path $x_{0:m}$ is an admissible loop that starts at $0$, goes to $x$, then takes a step to $x+z_0$, and then goes back to $0$. The path does all this in $m\le C_1\abs{x}_1$ steps.
The probability $P_0(X_{0:m}=x_{0:m})$ is bounded below by $\kappa^{C_1\abs{x}_1}$, where $\kappa=\min_{z\in\rangez}p(z)$. 

Suppose $x\in\cG_+$ is such that $V(T_x\w,z_0)\ge\e$.
Since $V(T_y\w,z)\ge0$ for all $y\in\cG_+(\rangez)$ and 
all $z\in\rangez$ we see that for any admissible loop $x_{0:n}$ 
from $0$ to $0$, 
$\exp\bigl\{-\sum_{i=0}^{n-1}V(T_{x_i}\w,z_{i+1})\bigr\}\le1$. However, for the particular path $x_{0:m}$, 
constructed in the previous paragraph, 
we have $\exp\bigl\{-\sum_{i=0}^{n-1}V(T_{x_i}\w,z_{i+1})\bigr\}\le e^{-\e}$, since $V(T_x\w,z_0)\ge\e$. Hence, 
    \begin{align*}
    E_0\Bigl[\exp \Bigl\{ - \sum_{k=0}^{\sigma_1-1}V(T_{X_k}\w,Z_{k+1})\Bigr\}  \one_{\lbrace  \sigma_1<\infty \rbrace}\Bigr]
    &\le P_0(X_{0:m}\ne x_{0:m})+e^{-\e}P_0(X_{0:m}=x_{0:m})\\
    &= 1-(1-e^{-\e})P_0(X_{0:m}=x_{0:m}).
    \end{align*}
and with the equality in \eqref{g-bound} we get
    \begin{align}\label{g-bound2}
    g(0,0)\le \kappa^{-C_1\abs{x}_1}(1-e^{-\e})^{-1}.
    \end{align}

Recall the definition of $R_\e$ in \eqref{def:R}. 
We can take $x$ in \eqref{g-bound2} with $\abs{x}_1=R_\e$ and hence we have for $r>0$
    \[\P\{\log g(0,0)\ge r\}
    \le\P\Bigl\{R_\e\ge \frac{r+\log(1-\e^{-1})}{C_1\abs{\log\kappa}}\Bigr\}.
    \]
Since we assume that $R_\e$ has $d$ finite moments we have for any $s>0$
    \begin{align*}
    \sum_{y\in\cG_+(\range)}\P\{\abs{\log g(y,y)}\ge s\abs{y}_1\}
    &\le C\sum_{r\ge0}r^{d-1}\P\{\log g(0,0)\ge sr\}\\
    &\le C\sum_{r\ge0}r^{d-1}\P\Bigl\{R_\e\ge \frac{sr+\log(1-\e^{-1})}{C\abs{\log\kappa}}\Bigr\}<\infty.
    \end{align*}
The claim of the lemma follows  from an application of Borel-Cantelli's lemma.
\end{proof}

\eqref{lim:shape:green} now follows from \eqref{g/y}, \eqref{lim:shape}, and \eqref{eq:aux1010}.\hfill\qed

\appendix

\section{Convex analysis Lemmas}\label{Appx2} 

\begin{lemma}\label{lm:aux1111}
 	Let $\range$ be a finite subset of $\R^d$ and let $\Uset$ be its convex hull. 
	If $0\in\Uset$, then let $\Usetz$ be the unique face of $\Uset$ such that $0\in\ri\Usetz$. If $0\not\in\Uset$ let $\Usetz=\varnothing$. Let $\rangez=\range\cap\Usetz$.
	Then there exist $\delta>0$ and $\widehat{u}\in \R^d$ such that $\widehat{u}\cdot z=0$ for all $z\in \rangez$ and $\widehat{u}\cdot z \ge \delta$ for all $z\in \range\setminus\rangez$.
 \end{lemma}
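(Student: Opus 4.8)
The plan is to split into the routine case $0\notin\Uset$ and the substantive case $0\in\Uset$. If $0\notin\Uset$, then $\rangez=\varnothing$, so the requirement $\widehat u\cdot z=0$ on $\rangez$ is vacuous; since $\Uset=\co\range$ is a nonempty compact convex set missing the origin, the separating hyperplane theorem produces $\widehat u\in\R^d$ with $\widehat u\cdot x>0$ for all $x\in\Uset$, and then, $\range$ being finite, $\delta:=\min_{z\in\range}\widehat u\cdot z>0$ works.

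Now suppose $0\in\Uset$. If $\range=\rangez$ the claim is trivial with $\widehat u=0$, $\delta=1$, so I would assume $\range\setminus\rangez\neq\varnothing$. Set $L=\mathrm{aff}(\Usetz)$; since $0\in\Usetz\subseteq L$ this is a \emph{linear} subspace, and I let $\pi$ denote orthogonal projection onto $L^\perp$. For $z\in\rangez$ we have $z\in\Usetz\subseteq L$, so \emph{any} $\widehat u\in L^\perp$ automatically satisfies $\widehat u\cdot z=0$; and for any $z$, $\widehat u\cdot z=\widehat u\cdot\pi(z)$ when $\widehat u\in L^\perp$. Thus the whole lemma reduces to the assertion that $0\notin C:=\co\{\pi(z):z\in\range\setminus\rangez\}$ inside $L^\perp$: once that is known, the separating hyperplane theorem applied within the Euclidean space $L^\perp$ (to the compact convex set $C$ and the point $0$) yields $\widehat u\in L^\perp$ and $\beta>0$ with $\widehat u\cdot w\ge\beta$ on $C$, hence $\widehat u\cdot z=\widehat u\cdot\pi(z)\ge\beta$ for every $z\in\range\setminus\rangez$, and $\delta=\beta$ finishes the proof.

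The heart of the matter — and the step I expect to require the most care — is showing $0\notin C$. I would argue by contradiction: given a convex representation $0=\sum_{z\in\range\setminus\rangez}\lambda_z\pi(z)$, put $v=\sum_{z\in\range\setminus\rangez}\lambda_z z\in\Uset$; then $\pi(v)=0$ says $v\in L=\mathrm{aff}(\Usetz)$. Because $0\in\ri\Usetz$ and $v\in\mathrm{aff}(\Usetz)$, for small $\e>0$ the point $-\e v$ lies in $\Usetz\subseteq\Uset$, so the identity $0=\tfrac{\e}{1+\e}v+\tfrac1{1+\e}(-\e v)$ writes $0$ as a convex combination with strictly positive weights of two points of $\Uset$; since $\Usetz$ is a face of $\Uset$ containing $0$, this forces $v\in\Usetz$ (trivially true also in the degenerate case $v=0$). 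But then $v=\sum_{z\in\range\setminus\rangez}\lambda_z z$ exhibits a point of the face $\Usetz$ as a convex combination of points of $\Uset$, so by \cite[Theorem 18.1]{Roc-70} every $z$ with $\lambda_z>0$ must lie in $\Usetz$, hence in $\range\cap\Usetz=\rangez$ — contradicting $z\in\range\setminus\rangez$ (some $\lambda_z>0$ since the weights sum to $1$). This gives $0\notin C$, and the lemma follows. The small points to be careful about along the way are: that $L$ really is a linear subspace (it contains $0$); that "$-\e v\in\Usetz$ for small $\e$" is precisely the content of $0\in\ri\Usetz$ combined with $v\in\mathrm{aff}\Usetz$; and that the separation in $L^\perp$ needs $C$ compact, which holds since $C$ is the convex hull of finitely many points.
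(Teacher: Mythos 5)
Your proposal is correct, and it reaches the conclusion by a noticeably different organization than the paper. The paper works entirely in $\R^d$: it sets $\cA=\co(\range\setminus\rangez)$, proves $\spn(\rangez)\cap\cA=\varnothing$ by decomposing $\cA$ into the relative interiors of its finitely many faces and using the chain $\spn(\rangez)\cap\Uset\cap\cA=\Usetz\cap\cA$, then strongly separates the closed subspace $\spn(\rangez)$ from the compact set $\cA$, extracting $\widehat u\cdot z=0$ on $\rangez$ from the fact that a linear functional bounded on a subspace must vanish on it (the $\lambda\to\pm\infty$ scaling step). You instead quotient out the subspace first: with $L=\mathrm{aff}\,\Usetz$ (a linear subspace since $0\in\Usetz$) and $\pi$ the orthogonal projection onto $L^\perp$, the orthogonality on $\rangez$ is automatic for any $\widehat u\in L^\perp$, and the lemma reduces to separating the point $0$ from the compact convex set $C=\co\{\pi(z):z\in\range\setminus\rangez\}$. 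The two disjointness statements are equivalent ($0\notin C$ iff $L\cap\co(\range\setminus\rangez)=\varnothing$), but your proof of it is different and arguably more elementary: you pull the convex combination back to $v\in L\cap\Uset$, use $0\in\ri\Usetz$ to place $-\e v$ in $\Usetz$, write $0$ as a proper convex combination of $v$ and $-\e v$ so the face property of $\Usetz$ forces $v\in\Usetz$, and then apply \cite[Theorem 18.1]{Roc-70} to push every $z$ with positive weight into $\rangez$, a contradiction. This bypasses both the face decomposition of $\cA$ and the identity $\spn(\rangez)\cap\Uset=\Usetz$ on which the paper's chain of equalities rests (a standard fact, but one the paper does not spell out), at the small cost of the routine projection/separation-in-$L^\perp$ step; your treatment of the boundary cases ($0\notin\Uset$, $\range=\rangez$, $v=0$) is complete as well.
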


 \begin{proof}
	When $0\in\ri\Uset$, $\Usetz=\Uset$, $\rangez=\range$, and we can take $\widehat{u}=0$ and any $\delta>0$. The claim also holds when $0\not\in\Uset$ by applying the Hyperplane Separation Theorem.
 
 	Assume now that $0\in\Uset\setminus\ri\Uset$. Let $\cA$ be the convex hull of $\range\setminus\rangez$. 
	Let $\cA_0$ be a face of $\cA$. It is in particular a convex subset of $\Uset$. Since $\Usetz$ is a face of $\Uset$, if $\Usetz$ intersects the relative interior of $\cA_0$, then $\cA_0\subset\Usetz$ and consequently $\cA_0\cap\range\subset\rangez$. Since $\cA_0\cap\range\subset\range\setminus\rangez$ we have a contradiction and hence the relative interior of $\cA_0$ cannot intersect $\Usetz$.  Since $\cA$ is the union of the relative interiors of all its finitely many faces, we see that $\cA$ cannot intersect $\Usetz$. 
 	This implies  $\spn(\rangez)\cap \cA= \spn(\rangez)\cap\,\Uset\cap\cA=\Usetz\cap\cA=\varnothing$, where 
	$\spn(\rangez)$ is the vector space generated by $\rangez$.
 	
 	The Hyperplane Separation Theorem says that there exist $\delta, \delta'\in\R$  and $\widehat{u}\in \R^d$ such that $\widehat{u}\cdot z\le \delta'<\delta\le \widehat{u}\cdot z'$ for all $z\in \spn(\rangez) $ and $z'\in \cA.$  Since $\widehat{u}\cdot z\le \delta'$ holds for all $z\in \spn(\rangez) $, we have $\lambda\widehat{u}\cdot z\le \delta'$ for any $\lambda\in \R$. 
 	Dividing both sides by $\lambda$ and sending it to $\infty$ and $-\infty$ we get $\widehat{u}\cdot z=0.$ 
 	In particular, $\delta>0.$ We have shown that $\widehat{u}\cdot z=0$ for all $z\in \rangez$ and $\widehat{u}\cdot z \ge \delta$ for all $z\in \range\setminus\rangez.$ The lemma is proved.
 \end{proof}

\begin{lemma}\label{in-cone}
 	Let $\range$ be a finite subset of $\Q^d$. Let $\xi\in\Q^d\cap\cplus(\range).$ Then there exist rational coefficients $\gamma_{z}\ge 0$, $z\in\range$, such that $\xi=\sum_{{z}\in \range}\gamma_{z}z.$
 \end{lemma}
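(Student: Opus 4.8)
The plan is to first invoke a Carath\'eodory-type reduction to write $\xi$ as a nonnegative combination of a \emph{linearly independent} subset of $\range$, and then to observe that such a representation is unique and, all the data being rational, must have rational coefficients.

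First I would choose, among all representations $\xi=\sum_{z\in\range}b_zz$ with $b_z\in\R_+$ (one exists because $\xi\in\cplus(\range)$), one whose support $S=\{z\in\range:b_z>0\}$ has minimal cardinality, and claim that the vectors of $S$ are linearly independent. Indeed, suppose $\sum_{z\in S}\lambda_zz=0$ with the $\lambda_z$ not all zero; replacing $(\lambda_z)$ by $(-\lambda_z)$ if necessary we may assume $\lambda_{z_0}>0$ for some $z_0\in S$. Setting $\lambda_z=0$ for $z\in\range\setminus S$ and $b_z(t)=b_z-t\lambda_z$ for $t\ge0$, we have $\sum_{z\in\range}b_z(t)z=\xi$ for every $t$; taking $t=\min\{b_z/\lambda_z:z\in S,\ \lambda_z>0\}>0$ makes every $b_z(t)$ nonnegative while forcing at least one of them to vanish. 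This represents $\xi$ as a nonnegative combination of $\range$ whose support is a proper subset of $S$, contradicting minimality; hence $S$ is linearly independent.

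Next I would pass to rationality. Enumerate $S=\{z_1,\dots,z_k\}$ and let $M$ be the $d\times k$ matrix with columns $z_1,\dots,z_k$; since these columns are linearly independent, $M^{\mathrm{T}}M$ is an invertible $k\times k$ matrix. With $c=(b_{z_1},\dots,b_{z_k})^{\mathrm{T}}$ we have $Mc=\xi$, hence $c=(M^{\mathrm{T}}M)^{-1}M^{\mathrm{T}}\xi$; as $M$ and $\xi$ have rational entries, so does $c$. Setting $\gamma_z=b_z$ for $z\in S$ and $\gamma_z=0$ for $z\in\range\setminus S$ then gives nonnegative rational coefficients with $\xi=\sum_{z\in\range}\gamma_zz$, as required. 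There is no real obstacle here: the only points to check are that the reduction step preserves both the value $\xi$ and nonnegativity, which are immediate from the construction, and that passing to the independent subset makes the coefficient vector the solution of an invertible rational linear system.
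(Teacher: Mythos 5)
Your proof is correct and follows essentially the same route as the paper: a Carath\'eodory-type reduction (the paper removes dependent vectors iteratively with an explicit choice of $t$, you phrase it as a minimal-support argument by contradiction) followed by solving the resulting full-rank rational system via $(M^{\mathrm{T}}M)^{-1}M^{\mathrm{T}}\xi$. No gaps.
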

 
 \begin{proof}
 	There exist $\tilde{\gamma}_z\ge 0$ such that $\xi=\sum_{{z}\in \range}\tilde{\gamma}_z z$. Let $\range=\lbrace z_1,\dots,z_n\rbrace.$ If $z_1\dots,z_n$ are not linearly independent then we can write $\sum_{i=1}^{n}a_iz_i=0$ such that not all $a_i$ are zero. Then for any $t \in \R$,
	\[\xi
	=\sum_{i=1}^n\left(\tilde{\gamma}_{z_i}-t a_i\right)z_i. \] 
 	Let \[t=\max \Bigl( \min\Bigl\{\frac{\tilde{\gamma}_{z_i}}{a_i}:a_i>0 \Bigr\},  \max\Bigl\{\frac{\tilde{\gamma}_{z_i}}{a_i}:a_i<0 \Bigr\}  \Bigr).
 	\]
 	
 	If $t<0,$ then it must equal  $ \max\left\lbrace {\tilde{\gamma}_{z_i}/a_i}:a_i<0 \right \rbrace $ and hence for all $i\in\lbrace1,\dots,n\rbrace,$ either $a_i=0$ or $a_i<0$ and $t\ge \tilde{\gamma}_{z_i}/a_i$. In either case, $\tilde{\gamma}_{z_i}-t a_i\ge 0.$ Furthermore, equality is achieved for at least one $i.$
 	
 	If, on the other hand, $t\ge 0$, then $ t= \min\left\lbrace {\tilde{\gamma}_{z_i}/a_i}:a_i>0 \right \rbrace$. In this case, for each $i\in\lbrace1,\dots,n\rbrace$, either $a_i\le 0$ or $a_i>0$ and $t\le \tilde{\gamma}_{z_i}/a_i$. In both cases, $\tilde{\gamma}_{z_i}-t a_i\ge 0$. Again,  equality is achieved for some $i.$
 	
 	In either case, with our choice of $t$ we have $\tilde{\gamma}_{z_i}-t a_i\ge 0$ for all $i\in \lbrace 1,\dots,n\rbrace$ and there exists an $i$ such that $\tilde{ \gamma}_{z_i}-ta_i=0$.  After discarding all $z_i$ for which equality holds, and reindexing, we end up with 
 	\[ \xi = \sum_{i=1}^k \left(\tilde{\gamma}_{z_i}-ta_i  \right)z_i,\]
 	with $k\le n-1.$
 	We can assume $\lbrace z_1,\dots,z_k \rbrace$ are linearly independent because otherwise we can again write $\sum_{i=1}^k 
 	b_i z_i=0$ and repeat the above procedure, every time reducing the number of $z$'s by at least one. Define 
 	$\gamma_{z_i}=\tilde{\gamma}_{z_i}-ta_i.$ Let 
 	$\mathbf{x}$ be the column vector with entries $\gamma_{z_i}$, $i\in\lbrace1,\dots,k\rbrace$, and let 
	 $A=\left[z_1 \cdots z_k  \right]$ be the $d\times k$ matrix with column vectors $z_i$.
 	Then $A$ has full rank and rational entries and $A\mathbf{x}=\xi$ implies $\mathbf{x}=(A^T A)^{-1}A^T\xi$. This tells us $\mathbf{x}\in\Q^d.$  
\end{proof}

 \begin{proof}[Proof of Lemma \ref{boundednessofsteps}]  
 	Recall that $\Usetz$ is the unique face of $\Uset$ such that $0\in \ri \Usetz$ and $\rangez=\range\cap\Usetz$, with the convention that $\Usetz=\rangez=\varnothing$ if $0\not \in \Uset$. By Lemma \ref{lm:aux1111} there exist $\widehat{u}\in\R^d$ and $\delta>0$ such that $z\cdot \widehat{u}\ge \delta$ for $z\in \range\setminus\rangez$ and $z\cdot\widehat{u}=0$ for $z\in \rangez$. 
	
	Denote the dimension of the linear span of $\rangez$ by $m$. By the Fundamental Theorem of Lattices 
	\cite[Lemma 3.4]{Tao-Vu-10} there exist linearly independent vectors $\lbrace z_1,\dotsc,z_m \rbrace\subset \rangez$ that at the same time form a basis of $\spn(\rangez)$ and also generate the group $\cG(\rangez)$. 
%

 	For $\xi\in \cplus(\range)$ we can write $\xi =\sum_{{z}\in \range}\overline{ \gamma}_z(\xi)z$ with $\overline{ \gamma}_z(\xi)\ge 0.$ If $\xi \in \gplus(\range)$ we can assume $\overline{ \gamma}_z(\xi)\in \Z_+$ for all $z\in \range.$ Let $\gamma_{z}(\xi)=\overline{ \gamma}_z(\xi)$ for $z\in \range\setminus\rangez$. Then 
	\begin{align}\label{gamz1}
	0\le \gamma_{z}(\xi)\le \delta^{-1}\!\!\!\sum_{\overline{z}\in \range\setminus\rangez}\gamma_{\overline{z}}(\xi)\, \overline{ z}\cdot\widehat{u} =\delta^{-1}\sum_{\overline{z}\in \range}\overline{\gamma}_{\overline{z}}(\xi) \overline{ z}\cdot\widehat{u}=\delta^{-1}\xi\cdot\widehat{u}\le \delta^{-1}|\widehat{u}|_\infty|\xi|_1
	\end{align}
	for all $z\in\range\setminus\rangez$
	and
	\begin{align}\label{gamz2}
	\text{if $\xi\in\gplus(\range)$, then $\gamma_z(\xi)\in\Z_+$ for all $z\in\range\setminus\rangez$.}
	\end{align}
 	
 	Let $\xi'=\sum_{{z}\in \rangez}\overline{ \gamma}_z(\xi)z\in\spn(\rangez)$. 
 	Then there exist unique  $\tilde{ \gamma}_{z_i}(\xi')\in\R$, $i\in \lbrace 1,\ldots,m\rbrace,$ such that $\xi'=\sum_{i=1}^m\tilde{ \gamma}_{z_i}(\xi') z_i$. If $\xi\in \gplus(\range)$, then $\xi'\in \gplus(\rangez) $ and  $\tilde{ \gamma}_{z_i}(\xi')\in \Z$.  These functions are linear in $\xi'$ and hence there exists a constant $C$ such that 
 	\begin{align*}
 	|\tilde{ \gamma}_{z_i}(\xi')|\le C|\xi'|_1\le&C\Bigl( |\xi|_1+\sum_{{z}\in \range\setminus \rangez}{ \gamma}_z(\xi)|z|_1 \Bigr)
 	\le C\Bigl( 1+\delta^{-1} |\widehat{u}|_\infty \sum_{{z}\in \range\setminus \rangez}|z|_1 \Bigr)|\xi|_1.
 	\end{align*}
 	By Corollary A.3.\ of \cite{Ras-Sep-Yil-13}, for each $i\in \lbrace 1,\ldots, m\rbrace$ there exist $b_i(z)\in \Z_+$, $z\in \rangez$, such that $-z_i=\sum_{{z}\in \rangez}b_i(z)z.$ For $z\in\rangez$ set 
 	
 	\begin{align}\label{gamz}
	 \gamma_{z}(\xi) =\sum_{i=1}^m \Bigl[ (\tilde{ \gamma}_{z_i}(\xi'))^+\one{\lbrace  z=z_i \rbrace} +(\tilde{ \gamma}_{z_i}(\xi'))^-b_i(z) \Bigr].
	 \end{align}
 	Then 
 	\begin{equation}\label{gamz3}
 	0\le \gamma_{z}(\xi)\le \Bigl[1+\sum_{i=1}^m b_i(z) \Bigr]  \max_{1\le i\le m}|\tilde{ \gamma}_{z_i}(\xi')|\le  C'|\xi|_1\quad\text{for all $z\in\rangez$}.
 	\end{equation}
	Also \eqref{gamz} shows that
	\begin{align}\label{gamz4}
	\text{if $\xi\in\gplus(\range)$, then $\gamma_z(\xi)\in\Z_+$ for all $z\in\rangez$.}
	\end{align}
 	Lastly, 
 	\begin{align*}
 	\xi'=&\sum_{i=1}^m\tilde{ \gamma}_{z_i}(\xi')z_i\\
 	=&\sum_{i=1}^m\left(\tilde{ \gamma}_{z_i}(\xi')\right)^+ z_i+\sum_{i=1}^m\left(\tilde{ \gamma}_{z_i}(\xi')\right)^- (-z_i)\\
 	=&\sum_{i=1}^m \sum_{{z}\in  \rangez} \left(\tilde{ \gamma}_{z_i}(\xi')\right)^+  \one{\lbrace  z=z_i \rbrace} z+\sum_{i=1}^m \sum_{{z}\in \rangez} \left(\tilde{ \gamma}_{z_i}(\xi')\right)^- b_i(z)z \\
 	=&\sum_{{z}\in \rangez} \gamma_{z}(\xi)z
 	\end{align*}
 	and thus 
		\[\xi=\sum_{z\in\range}\overline\gamma_z(\xi)z=\xi'+\sum_{z\in\range\setminus\rangez}\gamma_z(\xi) z=\sum_{{z}\in \range}\gamma_{z}(\xi)z.\]
		This, together with \eqref{gamz1}, \eqref{gamz2}, \eqref{gamz3}, and \eqref{gamz4} show that
		 coefficients $\gamma_z(\xi)$ satisfy all the claims of the lemma.
 \end{proof}

\section{The proof of Theorem \ref{th:Gshape}}\label{app:Gshape}
We prove the shape theorem for $G_{0,(n),x}$. This can be repeated almost word-for-word to produce the proof of the shape theorem for $G^\infty_{0,(n),x}$.\smallskip

For $m,n\in\N$ and $x,y,z\in \Z^d$ such that 
$y-x\in D_m$ and $ z-y\in D_n$, 
decomposing into the values of $X_m$ we can write
 \[G_{x,(m+n),z}=\log\sum_{\substack{v:v-x\in D_m\\\ \ z-v\in D_n}} e^{G_{x,(m),v}}\cdot e^{G_{v,(n),z}}. \] 
This implies the superadditivity
\begin{equation}\label{supmult}
G_{x,(m+n),z}\ge G_{x,(m),y}+G_{y,(n),z}.
\end{equation}

	The proof of Theorem \ref{th:Gshape} proceeds by  contradiction. Fix $\delta>0.$  Assume the shape theorem does not hold. Thus, with positive probability  there exists an $\e >0$ and  a sequence $x_{\ell} \in \ell\Usetgen_\delta\cap  D_\ell $ such that $\ell\to \infty$ and 
	\begin{equation}
  \dfrac{\abs{G_{0,(\ell),x_{\ell} }-\ell\Lapp\left(\frac{x_{\ell} }{\ell}\right) }}{\ell}\ge \e.
	\end{equation}
	
	Let $\Omega'_1$ be the intersection of the above event with the full-measure events on which
	\eqref{eq:p2p} is satisfied and
	\eqref{def:cL} holds with $g(T_{x+kz}\w)=V(T_{x+kz}\w,z)$, for each $z\in\rangegen\setminus\{0\}$, and with $g(T_{x+k\hat z}\w)=V(T_{x+k\hat z}\w,0)$ for $\hat{z}$ as in the statement. We now work with a fixed $\w$ from $\Omega'_1$.
	
	We have $x_{\ell} =\sum_{z\in \rangegen}b_{\ell,z} z$ with $ b_{\ell,z}\in \Z_+$ and $\sum_{{z}\in \rangegen}b_{\ell,z} =\ell$.
	By compactness, we can find a subsequence $\ell_n$ and $\gamma_z\in [0,1]$ such that $b_{\ell_n,z}/\ell_n\to \gamma_z$ for all $z\in \rangegen.$ Then $x_{\ell_n}/\ell_n\to \xi \in \Usetgen_\delta$ where $\xi=\sum_{z\in \rangegen}\gamma_z z$ and $\sum_{z\in \rangegen}\gamma_z =1.$ Abbreviate $\ell_n$ by writing just $n.$  Choose some large $N$ such that   $\abs{x_n/n-\xi}_1<\e$ and 
	\[ \Bigl| \Lapp\Bigl(\frac{x_n}{n}\Bigr)-\Lapp(\xi) \Bigr| <\e/2 \]   
	for $n>N.$ Here we used the continuity of $\Lapp$ on $\Usetgen_\delta$. Then, for $n>N$ we have 
		\begin{equation}\label{contradiction assumption}
 \Bigl| \dfrac{ G_{0,(n),x_{n} }}{n} -\Lapp(\xi)\Bigr|\ge \e/2.
	\end{equation}
	Let $\e_1>0.$ 
	Let $\overline{\kappa}_{m,z}=\ce{m(\gamma_{z}+\e_1)}/\overline{r}_m$ where $\overline{r}_m=\sum_{{z}\in \range'} \ce{m(\gamma_z+\e_1)}$ with $m$ large, to be chosen further down. Note that $\overline{r}_m/m\to 1+\e_1\abs{\range'}$ as $m\to \infty$. Note also that 
	 \[\overline{\kappa}_{m,z}\to \frac{\gamma_{z}+\e_1}{1+\abs{\range'}\e_1}\quad \text{as}\quad m\to \infty. \]
	 Fix $m$ large enough so that for all $z\in \range'$
	 \begin{equation}\label{bounds for kappamz}
	 \frac{\gamma_{z}+\e_1/2}{1+\abs{\range'}\e_1} < \overline{\kappa}_{m,z} < \frac{(\gamma_{z}+2\e_1)}{1+\abs{\range'}\e_1}.
	 \end{equation}
	  Let   
	 \begin{equation*}
	 \overline{ \zeta}_m=\sum_{{z}\in \range'}\overline{\kappa}_{m,z} z, \quad \overline{k}_n = \Bigl\lfloor \frac{(1+\abs{\range'}\e_1)n}{\overline{r}_m}\Bigr\rfloor, \quad \text{and}\quad \overline{s}_z^{(n)}=\overline{r}_m	\overline{k}_n\overline{\kappa}_{m,z}-b_{n,z}.
	 \end{equation*}
	 Then  for any $z\in\range'$
	\begin{equation}\label{lower bound for s_z^n /n}
\frac{\overline{ s}_z^{(n)}}{n}\to(1+\abs{\range'}\e_1)\overline{\kappa}_{m,z}-\gamma_{z} > \e_1/2>0 \quad \text{as } n\to \infty
	\end{equation}

	 Thus, $\overline{s}_z^{(n)}> 0$ for large enough $n$
	 and then $\overline{r}_m \overline{k}_n \overline{ \zeta}_m-  x_n= \sum_{z\in \range'} (\overline{r}_m \overline{k}_n  \overline{\kappa}_{m,z} -b_{n,z}) z \in D_{\overline r_m\overline k_n-n}.$ 
	By using \eqref{supmult}, we get
	\begin{equation}\label{eq3 of shape theorem of restricted path}
G_{0,(n),x_n } \le G_{0,( \overline{r}_m\overline{k}_n),  \overline{r}_m\overline{k}_n\overline{ \zeta}_m }-G_{x_n,( \overline{r}_m \overline{k}_n-n), \overline{r}_m\overline{k}_n\overline{ \zeta}_m}. 
	\end{equation}
Similarly, let  $\underline{\kappa}_{m,z}=\fl{m\gamma_{z}}/\underline{r}_m$ where $\underline{r}_m=\sum_{{z}\in \range'}\fl{m\gamma_{z}}$. Note that if $\gamma_{z}=0$ then $\underline{\kappa}_{m,z}=0.$ Also, $\underline{r}_m/m\to 1$ and $\underline{\kappa}_{m,z}\to \gamma_z$ as $m\to \infty.$ Let $\delta'= \min_{ z\in \range' }\gamma_{z} >  0$.  Fix $m$ large enough such that 
$\underline{\kappa}_{m,z}\in [\delta'/2,1]$ 
 and $\abs{\underline{\kappa}_{m,z}-\gamma_{z}}<\e_1$  for all $z\in \range'.$  Now, suppose $\e_1<\delta'/\left(2\abs{\range'}\right)$ and let
\begin{equation*}
\underline{\zeta}_m=\sum_{{z}\in \range'}\underline{\kappa}_{m,z} z,\quad\underline{k}_n =  \Bigl\lfloor \frac{(1-2|\range'|\e_1/\delta')n}{\underline{r}_m}\Bigr\rfloor\quad \text{and}\quad \underline{s}_z^{(n)}=	b_{n,z}- \underline{r}_m\underline{k}_n\underline{\kappa}_{m,z} .
\end{equation*} 
Thus 
\begin{align*}
\frac{\underline{s}_z^{(n)}}{n}&\mathop{\longrightarrow}_{n\to\infty}\gamma_{z}- (1-2\abs{\range'}\e_1/\delta')\underline{\kappa}_{m,z}
\ge \gamma_{z}-\underline{\kappa}_{m,z}+|\range'|\e_1
\ge -\e_1+|\range'|\e_1 \ge\e_1>0.
\end{align*}
Then for $n$ large  $x_n-\underline{r}_m\underline{k}_n \underline{\zeta}_m = \sum_{z\in \range'} (b_{n,z}- \underline{r}_m\underline{k}_n\underline{\kappa}_{m,z} )z \in D_{n-\underline r_m\underline k_n}$. 
By using \eqref{supmult}, we get
\begin{equation}\label{eq4 of restricted path}
G_{0,(\underline{r}_m\underline{k}_n),\underline{r}_m\underline{k}_n\underline{\zeta}_m }+ G_{\underline{r}_m\underline{k}_n\underline{\zeta}_m,(n-\underline{r}_m\underline{k}_n),x_n }\le G_{0,(n),x_n }. 
\end{equation}
	Since  
	\begin{equation}\label{limit of over and under rk mn's divided n}
 \frac{ \overline{r}_m \overline{k}_n}{n}\to 	 1+|\range'|\e_1 \quad 
\text{and}\quad  \frac{\underline{r}_m\underline{k}_n }{n}\to 1-2|\range'|\e_1/\delta' \quad \text{as }  n\to \infty.
	\end{equation}
we have that for large $n$, if $z\in \range'$ then both  $\overline{r}_m \overline{k}_n   \overline{\kappa}_{m,z} -b_{n,z}$ and $b_{n,z}-\underline{r}_m \underline{k}_n  \underline{\kappa}_{m,z} $ 
	are bounded above by 
	\begin{align*}
	\overline{r}_m \overline{k}_n  \overline{\kappa}_{m,z}- \underline{r}_m \underline{k}_n    \underline{\kappa}_{m,z}
	&\le  n( (1+\abs{\range'}\e_1)+\e_1)\overline{\kappa}_{m,z}- n\left((1-2\abs{\range'}\e_1/\delta')-\e_1\right)\underline{\kappa}_{m,z}  \\ 
    &\le n( (1+\abs{\range'}\e_1)+\e_1)\frac{\gamma_{z}+2\e_1}{1+\abs{\range'}\e_1}- n\bigl((1-2\abs{\range'}\e_1/\delta')-\e_1\bigr)(\gamma_{z}-\e_1)  \\ 
    &=n\e_1\Bigl(3+\frac{\gamma_{z}+2\e_1}{1+\abs{\range'}\e_1}+2\abs{\range'}(\gamma_{z}-\e_1)/\delta'+(\gamma_{z}-\e_1)\Bigr)  \\
    &\le \bigl(6+2\abs{\range'}/\delta'\bigr)n\e_1=c_1n\e_1.  
	\end{align*}
	
	For $z\in \range\setminus\range'$, $\gamma_{z}=0$, $\underline{\kappa}_{m,z}=0$, and we have 
	\begin{align*}
	0\le 	\overline{r}_m \overline{k}_n  \overline{\kappa}_{m,z}- \underline{r}_m \underline{k}_n    \underline{\kappa}_{m,z}
	&\le n\bigl((1+\abs{\range'}\e_1)+\e_1\bigr)\frac{2\e_1}{1+\abs{\range'}\e_1}
	\le 3n\e_1\le c_1n\e_1.
	\end{align*}
	 We next develop, a lower bound for $G_{x_n,(	\overline{r}_m \overline{k}_n-n),	\overline{r}_m\overline{k}_n\overline{ \zeta}_m}.$  Fix a path from $x_n$ to $\overline{r}_m\overline{k}_n\overline{ \zeta}_m$	that takes $\overline{ s}_z^{(n)}$ $z$-steps for each $z\in \range'$.
	Since $\Usetgen$ is not a singleton there exists a nonzero $\hat{z}\in \range'$. If $0\in\range'$ then we have by \eqref{bounds for kappamz}, \eqref{lower bound for s_z^n /n}, and \eqref{limit of over and under rk mn's divided n}, that for a large enough $n$ 
	\begin{align*}
\frac{\overline{s}_0^{(n)}}{\overline{ s}_{\hat{z}}^{(n)}}
\le \frac{n\bigl((1+\abs{\range'}\e_1)\overline{\kappa}_{m,0}-\gamma_{0}+\e_1\bigr)}{n\e_1/2} 
\le \frac{\gamma_0+2\e_1-\gamma_0+\e_1}{\e_1/2}  =6. 
	\end{align*}
	This tells us the ratio of zero steps to $\hat{z}$ steps is at most $6$. Rearrange the path as follows.  Start the path  with blocks of $\hat{z}$-steps  followed by at most $6$ zero steps, until $\hat{z}$-steps and zero-steps exhausted. After that fix an ordering of  $\range'\setminus\lbrace0,\hat{z}\rbrace=\lbrace z_1, z_2,\ldots\rbrace$ and arrange the rest of the path  to take first all its $z_1$-steps then all $z_2$-steps and so on. Also note that any point $y$ on the path is such that $y\in \gplus(\range')$ and 
	\[ \abs{y}_1\le \abs{x_n}_1+(\overline{ r}_m\overline{ k}_n-n)\max_{ z\in \range' }\abs{z}_1 \le n(1+\abs{\range'}\e_1)\max_{ z\in \range' }\abs{z}_1=c_2n. \]
	Thus 
\begin{align*}
G_{x_n,(\overline{r}_m \overline{k}_n-n),\overline{r}_m\overline{k}_n\overline{ \zeta}_m} 
&\ge 
- \abs{\rangegen}\max_{ \substack{y\in\gplus(\rangegen) \\ \abs{y}_1\le c_2 n }}\max_{z\in \rangegen\setminus \lbrace 0\rbrace} \sum_{0\le i\le c_1\e_1 n} \abs{V(T_{y+iz}\w,z) }\\
&\qquad- 6\max_{ \substack{y\in\gplus(\rangegen) \\ \abs{y}_1\le c_2 n }} \sum_{0\le i\le c_1\e_1 n} \abs{V(T_{y+i\hat z}\w,0) }
+c_1\e_1 n\min_{ z\in \range'}\log p(z). 
	\end{align*}
	Now, by dividing both sides  by $n$ and taking $n \to \infty$ we get that 
	\begin{align*}
	\varliminf_{n \to \infty} \dfrac{G_{x_n,(\overline{r}_m \overline{k}_n-n),\overline{r}_m\overline{k}_n\overline{ \zeta}_m}}{n} 
	&\ge
	- \abs{\range'} \varlimsup_{n \to \infty} \max_{\substack{y\in\gplus(\range') \\ \abs{y}_1\le c_2 n }}\max_{z\in \range'\setminus \lbrace 0\rbrace} \frac{1}{n} \sum_{0\le i\le c_1 \e_1 n} \abs{V(T_{y+iz}\w,z) }\\
	&\qquad- 6 \varlimsup_{n \to \infty} \max_{\substack{y\in\gplus(\range') \\ \abs{y}\le c_2 n }}\frac{1}{n} \sum_{0\le i\le c_1 \e_1 n} \abs{V(T_{y+i\hat z}\w,0) }
	+c_1\e_1 \min_{ z\in \range'}\log p(z). 
	\end{align*}
	Fix any $\e_2>0.$ Since $\w\in\Omega_1'$ we can find $\e_1$ small enough so that the right-hand side in the above display is bounded below by $- \e_2$. 
	Similarly, we see that 
	\[
	\varliminf_{n\to \infty} \dfrac{G_{\underline{r}_m\underline{k}_n\underline{\zeta}_m ,(n- \underline{r}_m\underline{k}_n),x_n}}{n}\ge- \e_2. \] 
	These two bounds, together with  \eqref{eq3 of shape theorem of restricted path}, \eqref{eq4 of restricted path}, \eqref{limit of over and under rk mn's divided n}, and \eqref{eq:p2p}  give
	\begin{equation*}
	-\e_2+  (1-2\abs{\range'}\e_1/\delta')  \Lapp\bigl(\underline{\zeta}_m \bigr) \le \varliminf_{n\to\infty}  \frac{ G_{0,(n),x_{n} }}{n}\le \varlimsup_{n\to\infty}  \frac{ G_{0,(n),x_{n} }}{n}\le   (1+\abs{\range'}\e_1)  \Lapp\bigl(\overline{\zeta}_m\bigr)+\e_2. 
	\end{equation*}
	Since $\xi$ is on the face $\Usetgen$, $z\in \Usetgen$ for all $z\in \range'$ and both  $\underline{\zeta}_m$ and $\overline{\zeta}_m$  are in $\Usetgen$. Since $\Lapp$ is continuous on $\ri \Usetgen$  and $\xi\in\ri \Usetgen$, we have for $\e_1>0$ small enough
	\[ \Lapp\bigl(\underline{\zeta}_m\bigr)\to \Lapp(\xi) \quad \text{and} \quad \Lapp\bigl(\overline{\zeta}_m\bigr) \to \Lapp\Bigl((1+\abs{\range'}\e_1)^{-1}\Bigl(\xi +\e_1\sum_{{z}\in \range'}z\Bigr)\Bigr) \quad  \text{as} \quad m\to \infty.\]
	Take $m\to \infty$ then   $\e_1\to 0,$ use again the continuity of $\Lapp$ on $\ri \Usetz,$  and finally take $\e_2 \to 0$  to get that 
	\[\lim_{n\to \infty}\dfrac{G_{0,(n),x_{n} }}{n}=\Lapp(\xi),
	\]
	which contradicts \eqref{contradiction assumption}. Theorem \ref{th:Gshape} is proved.	\hfill\qed
\bibliographystyle{plain}
\bibliography{Lyapunov-AIHP}

\end{document}